\providecommand{\U}[1]{\protect\rule{.1in}{.1in}}
\pgfplotsset{compat=1.15}
\let\oldmathbf\mathbf
\renewcommand{\mathbf}[1]{\boldsymbol{\oldmathbf{#1}}}
\newtheorem{theorem}{Theorem}
\newtheorem*{theorem*}{Theorem}
\newtheorem{definition}[theorem]{Definition}
\newtheorem{lemma}[theorem]{Lemma}
\newtheorem{proposition}[theorem]{Proposition}
\newtheorem{remark}[theorem]{Remark}
\definecolor{zzttqq}{rgb}{0.6,0.2,0.}
\begin{document}
\title{Irregularities of distribution and geometry of planar convex sets}
\author[L. Brandolini]{Luca Brandolini}
\address{Dipartimento di Ingegneria Gestionale, dell'Informazione e della Produzione,
Universit\`{a} degli Studi di Bergamo, Viale Marconi 5, 24044 Dalmine BG, Italy}
\email{luca.brandolini@unibg.it}
\author[G. Travaglini]{Giancarlo Travaglini}
\address{Dipartimento di Matematica e Applicazioni, Universit\`{a} di Milano-Bicocca,
Via Cozzi 55, 20025 Milano, Italy}
\email{giancarlo.travaglini@unimib.it}
\subjclass[2010]{Primary 11K38, 42B10}
\keywords{Irregularities of distribution, Geometric discrepancy, Roth's theorem, Fourier
transforms, Cassels-Montgomery lemma, Inner disk condition}

\begin{abstract}
We consider a planar convex body $C$ and we prove several analogs of Roth's
theorem on irregularities of distribution. When $\partial C$ is $\mathcal{C}%
^{2}$ regardless of curvature, we prove that for every set $\mathcal{P}_{N}$
of $N$ points in $\mathbb{T}^{2}$ we have the sharp bound%
\[
\int_{0}^{1}\int_{\mathbb{T}^{2}}\left\vert \mathrm{card}\left(
\mathcal{P}_{N}\mathcal{\cap}\left(  \lambda C+t\right)  \right)  -\lambda
^{2}N\left\vert C\right\vert \right\vert ^{2}~dtd\lambda\geqslant cN^{1/2}\;.
\]
When $\partial C$ is only piecewise $\mathcal{C}^{2}$ and is not a polygon we
prove the sharp bound%
\[
\int_{0}^{1}\int_{\mathbb{T}^{2}}\left\vert \mathrm{card}\left(
\mathcal{P}_{N}\mathcal{\cap}\left(  \lambda C+t\right)  \right)  -\lambda
^{2}N\left\vert C\right\vert \right\vert ^{2}~dtd\lambda\geqslant cN^{2/5}.
\]
We also give a whole range of intermediate sharp results between $N^{2/5}$ and
$N^{1/2}$. Our proofs depend on a lemma of Cassels-Montgomery, on ad hoc
constructions of finite point sets, and on a geometric type estimate for the
average decay of the Fourier transform of the characteristic function of $C$.

\end{abstract}
\maketitle

\section{Introduction}

The term \textit{Irregularities of distribution}, often replaced with
(\textit{geometric})\textit{\ discrepancy}, has been introduced by K. Roth in
his seminal paper\footnote{K. Roth is famous for many results, first of all
his solution of the Siegel conjecture concerning approximation of algebraic
numbers by rationals. It is known that he considered \cite{roth} to be his
best work (see \cite{CV}).} \textit{On irregularities of distribution}
(published in Mathematika in 1954, \cite{roth}), where the following result
has been proved.

\begin{theorem}
Let $N>1$ be an integer, and let $u_{1},u_{2},\ldots,u_{N}$ be $N$ points, not
necessarily distinct, in the square $\left[  0,1\right)  ^{2}$. Then%
\begin{equation}
\int_{0}^{1}\int_{0}^{1}\left(  S\left(  x,y\right)  -Nxy\right)
^{2}\ dxdy>c\ \log\left(  N\right)  \ , \label{roth}%
\end{equation}
where
\begin{equation}
S\left(  x,y\right)  =\mathrm{card}\left\{  j=1,2,\ldots,N:u_{j}\in\left[
0,x\right)  \times\left[  0,y\right)  \right\}  \ . \label{Sxy}%
\end{equation}

\end{theorem}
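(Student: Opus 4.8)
The plan is to carry out Roth's original orthogonal-function argument. Write
\[
D(x,y)=S(x,y)-Nxy,\qquad (x,y)\in[0,1)^{2},
\]
and $\|g\|_{2}^{2}=\int_{0}^{1}\!\int_{0}^{1}g(x,y)^{2}\,dx\,dy$; the goal is to produce a single auxiliary function $F$ with $\|F\|_{2}\lesssim(\log N)^{1/2}$ and $\int D\,F\gtrsim\log N$, after which Cauchy--Schwarz forces $\|D\|_{2}^{2}\gtrsim\log N$. Fix the integer $n$ with $2^{\,n-2}\le N<2^{\,n-1}$, so that $n\asymp\log N$. For each $j=0,1,\dots,n$ let $\mathcal{D}_{j}$ be the partition of $[0,1)^{2}$ into the $2^{n}$ congruent dyadic rectangles of size $2^{-j}\times 2^{-(n-j)}$, and to a rectangle $R=I\times J\in\mathcal{D}_{j}$ associate the Haar-type function $h_{R}(x,y)=\chi_{R}(x,y)\,\varepsilon_{I}(x)\,\varepsilon_{J}(y)$, where $\varepsilon_{I}$ is $+1$ on the left half of $I$ and $-1$ on the right half (similarly $\varepsilon_{J}$). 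Then $|h_{R}|=\chi_{R}$, $\int h_{R}=0$, and $\int_{R}f\,h_{R}=0$ for every $f$ of the form $f(x,y)=\varphi(x)+\psi(y)$ on $R$. Call $R$ \emph{empty} if it contains none of the $u_{i}$; since there are only $N<2^{\,n-1}$ points, strictly more than half of the $2^{n}$ rectangles of each $\mathcal{D}_{j}$ are empty.

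The first ingredient is a single-scale lower bound. If $R=I\times J\in\mathcal{D}_{j}$ is empty, then on $R$ the function $S(x,y)$ splits as a function of $x$ plus a function of $y$: sort the points $u_{i}=(\xi_{i},\eta_{i})$ according to whether $\xi_{i}$ lies to the left of $I$, inside $I$, or to the right of $I$; those to the left contribute a term depending only on $y$, those to the right contribute nothing, and those with $\xi_{i}\in I$ must—because $R$ is empty—have $\eta_{i}\notin J$, hence contribute a term depending only on $x$. Thus $\int_{R}S\,h_{R}=0$, while $\int_{R}xy\,h_{R}=\bigl(\int_{I}x\,\varepsilon_{I}(x)\,dx\bigr)\bigl(\int_{J}y\,\varepsilon_{J}(y)\,dy\bigr)$ is a product of two one-dimensional integrals, each equal to minus the square of the relevant half-side, so that $\int_{R}D\,h_{R}=-N\,2^{-2n-4}$ with the \emph{same} sign for every empty $R$. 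Setting $f_{j}=-\sum_{R\in\mathcal{D}_{j}\ \text{empty}}h_{R}$ (the summands have pairwise disjoint supports and $|f_{j}|\le 1$) we obtain
\[
\int D\,f_{j}=\#\{\,R\in\mathcal{D}_{j}:R\ \text{empty}\,\}\cdot N\,2^{-2n-4}\ \ge\ 2^{\,n-1}\cdot 2^{\,n-2}\cdot 2^{-2n-4}=2^{-7}
\]
for every $j$, so that $F:=\sum_{j=0}^{n}f_{j}$ satisfies $\int D\,F\ge(n+1)\,2^{-7}\asymp\log N$.

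The second and decisive ingredient is the cross-scale orthogonality $\int f_{j}f_{k}=0$ for $j\ne k$, which keeps $\|F\|_{2}$ from growing linearly. It suffices that $\int h_{R}h_{R'}=0$ whenever $R=I\times J\in\mathcal{D}_{j}$ and $R'=I'\times J'\in\mathcal{D}_{k}$ with $j<k$. If $R\cap R'=\varnothing$ this is trivial; otherwise $J\cap J'\ne\varnothing$, and since $j<k$ the dyadic interval $J$ has length $2^{-(n-j)}\le\tfrac12\,2^{-(n-k)}=\tfrac12|J'|$, so $J$ lies inside one of the two halves of $J'$, on which $\varepsilon_{J'}$ is a constant $\pm1$; integrating $h_{R}h_{R'}$ first in $y$ then leaves a multiple of $\int_{J}\varepsilon_{J}=0$. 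Consequently $\|F\|_{2}^{2}=\sum_{j=0}^{n}\|f_{j}\|_{2}^{2}=\sum_{j=0}^{n}\sum_{R\ \text{empty}}|R|\le n+1$. Cauchy--Schwarz now yields $(n+1)\,2^{-7}\le\int D\,F\le\|D\|_{2}\,(n+1)^{1/2}$, i.e.
\[
\int_{0}^{1}\!\int_{0}^{1}(S(x,y)-Nxy)^{2}\,dx\,dy=\|D\|_{2}^{2}\ \ge\ 2^{-14}(n+1)\ \ge\ c\log N,
\]
which is \eqref{roth}.

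The one genuinely delicate point is the tension between these two ingredients. Detecting the discrepancy at a \emph{single} dyadic aspect ratio is routine, but merely summing the $n+1\asymp\log N$ single-scale test functions would make $\|F\|_{2}$ of order $\log N$ and destroy the estimate; it is exactly the vanishing of the cross-scale pairings $\int h_{R}h_{R'}$—a manifestation of the multiresolution structure of the Haar system—that keeps $\|F\|_{2}$ of order only $(\log N)^{1/2}$, and everything else is bookkeeping. A Fourier-analytic variant—expanding $D$ in a trigonometric basis and exploiting the decay of the Fourier coefficients of $\chi_{[0,x)\times[0,y)}$—is also available and is closer in spirit to the methods used later in this paper, but for the unit square the Haar argument above is the most economical.
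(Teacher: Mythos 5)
Your proof is correct, and it is a faithful and careful rendition of Roth's original 1954 orthogonal-function (Haar) argument. The paper you are working against does not actually prove this theorem: it quotes it in the introduction as the founding result of the field and cites Roth's paper, so there is no proof in the source to compare against line by line. That said, a few remarks on the relation to the paper's methods are in order. Every step you give checks out: the single-scale computation $\int_R D\,h_R=-N2^{-2n-4}$ (since on an empty dyadic box $S$ is a sum of a function of $x$ and a function of $y$, which $h_R$ annihilates, and $\int_I x\,\varepsilon_I\,dx=-2^{-2j-2}$, $\int_J y\,\varepsilon_J\,dy=-2^{-2(n-j)-2}$); the uniform sign that lets you sum over empty $R$; the counting $\#\{\text{empty }R\}>2^{n-1}$ from $N<2^{n-1}$; the cross-scale orthogonality $\int h_R h_{R'}=0$ because the shorter dyadic $y$-side nests inside one half of the longer one; and the closing Cauchy--Schwarz. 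The only cosmetic gap is that you assert rather than derive that $\int_I x\,\varepsilon_I(x)\,dx$ equals minus the square of the half-side, but the computation is one line and standard.

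Where your approach diverges from the rest of the paper is in the choice of test functions. For their own theorems (Theorems~\ref{square}, \ref{C2}, \ref{Theorem B-a}, \ref{irredistr}), the authors work on the torus $\mathbb{T}^{2}$, expand the discrepancy in a Fourier series, apply Parseval, and combine a Cassels--Montgomery trigonometric-polynomial lemma (Lemma~\ref{lemmaCassels}) with lower bounds for the average decay of $\widehat{\chi}_{C}$ obtained from chord-length estimates. That Fourier-side machinery is essential once rectangles are replaced by smooth or piecewise-smooth convex bodies, because Haar rectangles no longer match the geometry; the decomposition into scales $j=0,\dots,n$ is replaced by a decomposition of frequency space into rotated rectangles of varying aspect ratio. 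Your Haar argument is the most elementary proof known for axis-parallel rectangles on $[0,1)^{2}$, and you correctly note at the end that a Fourier variant exists (essentially Montgomery's proof of Theorem~\ref{square}, which the paper does cite). So: correct proof, classical route, consistent with but methodologically orthogonal to the techniques the paper develops for its new results.
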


From here on out, $c,\ c_{1},\ldots\ $ are positive constants, independent of
$N$, which may change from step to step.

Given any finite point set $\mathcal{P}_{N}=\left\{  u_{j}\right\}  _{j=1}%
^{N}\subset\left[  0,1\right)  ^{2}$, Roth's theorem concerns the $L^{2}$
discrepancy between ($N$ times) the area of the rectangle $\left[  0,x\right)
\times\left[  0,y\right)  $ and the number of points $u_{j}$ that belong to
the above rectangle. More generally, Discrepancy Theory concerns the problem
of replacing a continuous object with a discrete sampling, and is presently a
crossroads between many fields of Mathematics (see e.g. \cite{BC}, \cite{BDP},
\cite{BGT}, \cite{chazelle}, \cite{CST}, \cite{dick}, \cite{DT},
\cite{matousek}, \cite{travaglini}).

\medskip

Roth's paper dealt with the van der Corput conjecture\footnote{If $s_{1}%
,s_{2},s_{3},\ldots$ is an infinite sequence of real numbers lying between $0$
and $1$, then corresponding to any arbitrarily large $k$, there exist a
positive integer $n$ and two subintervals, of equal length, of $\left(
0,1\right)  $, such that the number of $s_{v}$ ($v=1,\ldots,n$) that lie in
one of the subintervals differs from the numbers of such $s_{v}$ that lie in
the other subinterval by more than $k$.}, that is a $1$-dimensional problem
about infinite numerical sequences that turns into a $2$-dimensional geometric
problem about distributions of finite point sets with respect to a family of
rectangles. Roth not only improved the quantitative solution of the van der
Corput conjecture previously obtained by T. van Aardenne-Ehrenfest
(\cite{Vana45}, \cite{Vana49}), but he also introduced a geometric point of
view and \textquotedblleft started a new field\textquotedblright. \medskip

In 1956 H. Davenport \cite{davenport} proved that the $\log\left(  N\right)  $
lower bound in (\ref{roth})\ cannot be improved. He showed that, for every
$N$, there exist $N$ points $u_{1},u_{2},\ldots,u_{N}$ in the square $\left[
0,1\right)  ^{2}$ such that%
\[
\int_{0}^{1}\int_{0}^{1}\left(  S\left(  x,y\right)  -Nxy\right)
^{2}\ dxdy\leqslant c\log\left(  N\right)  ~,
\]
where $S\left(  x,y\right)  $ is as in (\ref{Sxy}).\medskip

H. Montgomery \cite[Ch. 6]{montgomery} introduced a different point of view
and used Fourier series to prove the following result.

\begin{theorem}
\label{square}For every finite set $\mathcal{P}_{N}$ of $N$ points in
$\mathbb{T}^{2}=\mathbb{R}^{2}/\mathbb{Z}^{2}$ we have%
\begin{equation}
\int_{0}^{1}\int_{\mathbb{T}^{2}}\left\vert \mathrm{card}\left(
\mathcal{P}_{N}\mathcal{\cap}\left(  \left[  0,\lambda\right)  ^{2}+t\right)
\right)  -\lambda^{2}N\right\vert ^{2}\ dtd\lambda\geqslant c\log\left(
N\right)  \ .\label{montgomery}%
\end{equation}

\end{theorem}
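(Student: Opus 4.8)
The plan is to use the Fourier-analytic method. Write the discrepancy function as
\[
D(t,\lambda) = \mathrm{card}\bigl(\mathcal{P}_N \cap ([0,\lambda)^2 + t)\bigr) - \lambda^2 N
= \sum_{j=1}^N \chi_{[0,\lambda)^2}(u_j - t) - \lambda^2 N,
\]
viewed for fixed $\lambda$ as a function of $t \in \mathbb{T}^2$. First I would expand $D(\cdot,\lambda)$ in a Fourier series on $\mathbb{T}^2$. The Fourier coefficients are, up to normalization, $\widehat{\chi_{[0,\lambda)^2}}(m)\,\sum_{j} e^{2\pi i m\cdot u_j}$ for $m \in \mathbb{Z}^2$, and the $m=0$ term is cancelled by the $-\lambda^2 N$ subtraction. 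By Parseval, $\int_{\mathbb{T}^2}|D(t,\lambda)|^2\,dt = \sum_{m\neq 0} |\widehat{\chi_{[0,\lambda)^2}}(m)|^2\,\bigl|\sum_j e^{2\pi i m\cdot u_j}\bigr|^2$. The factored structure of the square gives $\widehat{\chi_{[0,\lambda)^2}}(m_1,m_2) = \phi_\lambda(m_1)\phi_\lambda(m_2)$ where $\phi_\lambda(k) = \int_0^\lambda e^{-2\pi i k s}\,ds$, which for $k\neq 0$ has $|\phi_\lambda(k)| \asymp \min(\lambda, 1/|k|)$ and for $k=0$ equals $\lambda$.

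Next I would integrate in $\lambda$ over $[0,1]$. The key computation is
\[
\int_0^1 |\widehat{\chi_{[0,\lambda)^2}}(m)|^2 \, d\lambda = \int_0^1 |\phi_\lambda(m_1)|^2\,|\phi_\lambda(m_2)|^2\, d\lambda,
\]
and one wants a lower bound of the form $c/(\,\overline{m_1}^2\,\overline{m_2}^2\,)$ where $\overline{k} := \max(|k|,1)$; concretely, using $|\phi_\lambda(k)|^2 \gtrsim 1/\overline{k}^2$ for $\lambda$ in a subinterval of $[0,1]$ of length comparable to $1$ (e.g. $\lambda \in (1/2,1)$), one gets $\int_0^1 |\widehat{\chi_{[0,\lambda)^2}}(m)|^2\,d\lambda \geq c\,\overline{m_1}^{-2}\,\overline{m_2}^{-2}$. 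Hence
\[
\int_0^1\!\!\int_{\mathbb{T}^2} |D(t,\lambda)|^2\,dt\,d\lambda \;\geq\; c\sum_{m\neq 0} \frac{1}{\overline{m_1}^2\,\overline{m_2}^2}\,\Bigl|\sum_{j=1}^N e^{2\pi i m\cdot u_j}\Bigr|^2.
\]

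The final and decisive step is to bound this trigonometric sum from below. This is exactly where the Cassels–Montgomery-type argument enters: one chooses a finite set of frequencies $m$ — specifically those $m=(m_1,m_2)$ with $1\le |m_1|,|m_2| \le R$ for a suitable $R\asymp N$ (or a dyadic range) — and shows that the weighted sum $\sum_{m} \overline{m_1}^{-2}\overline{m_2}^{-2}\,|\sum_j e^{2\pi i m\cdot u_j}|^2$ cannot be too small, because if all the exponential sums $|\sum_j e^{2\pi i m\cdot u_j}|$ were small simultaneously for all these $m$, the points $u_j$ would have to be too uniformly distributed, contradicting a counting/orthogonality identity. Quantitatively, one uses that $\sum_{m : |m_i|\le R} \bigl|\sum_j e^{2\pi i m\cdot u_j}\bigr|^2 \ge$ the diagonal contribution $\asymp N R^2$ (taking into account near-collisions via a Fejér-kernel smoothing), while the weights $\overline{m_1}^{-2}\overline{m_2}^{-2}$ are summable with total mass $\asymp (\log R)^2$ over the box but the "effective" averaging produces one factor of $\log N$. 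The main obstacle is precisely this last step: extracting the single logarithm cleanly, which requires the right choice of the cutoff $R\asymp N$, a Fejér-type majorant to handle clustered points, and the counting lemma ensuring the exponential sums are large on average — I would follow Montgomery's treatment in \cite[Ch.~6]{montgomery}, using the Cassels–Montgomery lemma in the form that for any probability measure the associated exponential sums over a box of side $R$ have $L^2$-average bounded below in terms of $N$ and $R$.
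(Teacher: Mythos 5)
The paper does not actually give a proof of Theorem~\ref{square}; it is stated as a known result of Montgomery with a citation to \cite[Ch.~6]{montgomery}, so there is no ``paper's own proof'' to compare against. That said, your proposed route is essentially Montgomery's (and it is consistent with the toolbox the paper develops for its new theorems, in particular the Cassels--Montgomery lemma, Lemma~\ref{lemmaCassels}), so the plan deserves a careful look on its own merits.

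The first half of your plan is correct and clean: Parseval in $t$ gives
\[
\int_{\mathbb{T}^{2}}\left\vert D(t,\lambda)\right\vert ^{2}dt=\sum_{m\neq0}\left\vert \widehat{\chi}_{[0,\lambda)^{2}}(m)\right\vert ^{2}\left\vert \sum_{j}e^{2\pi im\cdot u_{j}}\right\vert ^{2},
\]
and the product structure $\widehat{\chi}_{[0,\lambda)^{2}}(m)=\phi_{\lambda}(m_{1})\phi_{\lambda}(m_{2})$ together with the exact computation $\int_{0}^{1}\sin^{2}(\pi m_{1}\lambda)\sin^{2}(\pi m_{2}\lambda)\,d\lambda\in\{1/4,3/8\}$ for nonzero $m_{1},m_{2}$ gives
\[
\int_{0}^{1}\left\vert \widehat{\chi}_{[0,\lambda)^{2}}(m)\right\vert ^{2}d\lambda\asymp\frac{1}{\overline{m_{1}}^{2}\,\overline{m_{2}}^{2}}.
\]
(Your remark that one can restrict $\lambda$ to $(1/2,1)$ and use $|\phi_{\lambda}(k)|\gtrsim1/\overline{k}$ pointwise is not quite right, because $\phi_{\lambda}(k)$ vanishes whenever $k\lambda\in\mathbb{Z}$; the correct statement is only at the level of the $\lambda$-integral, as above.)

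The genuine gap is in the final step, where the single $\log N$ has to be extracted, and this is where your description goes wrong. You assert that over the box $1\le|m_{1}|,|m_{2}|\le R$ the weights $\overline{m_{1}}^{-2}\overline{m_{2}}^{-2}$ have ``total mass $\asymp(\log R)^{2}$''; in fact $\sum_{1\le|m_{1}|,|m_{2}|\le R}\overline{m_{1}}^{-2}\overline{m_{2}}^{-2}$ is bounded independently of $R$, so no logarithm appears this way, and the phrase that the ``effective averaging produces one factor of $\log N$'' is not an argument. The way the logarithm actually enters is geometric and matches exactly the mechanism the paper uses in the proof of Theorem~\ref{Theorem B-a}: one applies the Cassels--Montgomery lemma not to a single large box but to a one-parameter family of $\asymp\log N$ symmetric convex rectangles $\Omega_{\ell}$ (here: axis-parallel rectangles of fixed area $\kappa N$ and dyadic eccentricities, $\Omega_{\ell}\approx[-2^{\ell}\sqrt{\kappa N},2^{\ell}\sqrt{\kappa N}]\times[-2^{-\ell}\sqrt{\kappa N},2^{-\ell}\sqrt{\kappa N}]$). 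For each $\ell$, Lemma~\ref{lemmaCassels} (with $\kappa$ chosen large enough to beat the $-cN^{2}$ error) gives $\sum_{m\in\Omega_{\ell}\cap\mathbb{Z}^{2},\,m\neq0}\left\vert \sum_{j}e^{2\pi im\cdot u_{j}}\right\vert ^{2}\gtrsim N^{2}$, while on $\Omega_{\ell}$ the weight satisfies $\overline{m_{1}}^{-2}\overline{m_{2}}^{-2}\gtrsim(\kappa N)^{-2}$; one then sums a pointwise inequality of the form $\Gamma\sum_{\ell}\chi_{\Omega_{\ell}}(m)\leqslant\int_{0}^{1}|\widehat{\chi}_{[0,\lambda)^{2}}(m)|^{2}d\lambda$ over the $\asymp\log N$ values of $\ell$, exactly as the paper does with rotated rectangles in the proof of Theorem~\ref{Theorem B-a}. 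Without this dyadic family of rectangles and the pointwise comparison, your plan does not produce the logarithm.

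Finally, the Fej\'er majorant you mention is unnecessary in this $L^{2}$/Parseval setting; the non-negativity built into the Cassels--Montgomery construction (a square modulus of a trigonometric polynomial) already plays that role, as in the paper's Lemma~\ref{lemmaCassels}.
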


M. Drmota proved that the LHS in (\ref{roth}) and the LHS in (\ref{montgomery}%
) are equivalent (see \cite{drmota1}, see also \cite{ruzsa}).\medskip

It is natural to replace the rectangle $\left[  0,x\right)  \times\left[
0,y\right)  $ in Roth's theorem with other geometric objects, first of all
suitable families of convex bodies (that is, bounded convex sets with
non-empty interiors). Then the lower bound of the discrepancy may be much
larger than a logarithm, as W. Schmidt first pointed out considering the case
of a ball (see \cite{schmidt}). More generally, we can consider an arbitrary
convex body $C$ and average its discrepancy over translations, dilations and
rotations. J. Beck \cite{beck} and H. Montgomery (see \cite[Ch. 6]%
{montgomery}) proved independently the following result (which we state only
in the planar case).

\begin{theorem}
\label{BM}Let $C\subset\mathbb{T}^{2}$ be a convex body of diameter less than
$1$. Then for every set $\mathcal{P}_{N}$ of $N$ points in $\mathbb{T}^{2}$ we
have%
\begin{equation}
\int_{0}^{1}\int_{SO\left(  2\right)  }\int_{\mathbb{T}^{2}}\left\vert
\mathrm{card}\left(  \mathcal{P}_{N}\mathcal{\cap}\left(  \lambda\sigma\left(
C\right)  +t\right)  \right)  -\lambda^{2}N\left\vert C\right\vert \right\vert
^{2}~dtd\sigma d\lambda\geqslant cN^{1/2}\;, \label{bm}%
\end{equation}
where $\left\vert C\right\vert $ denotes the area of $C$.
\end{theorem}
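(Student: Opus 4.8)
The plan is to run Montgomery's Fourier-analytic method. Fix $\mathcal P_N=\{u_j\}_{j=1}^N\subset\mathbb T^2$ and set $\widehat\mu(m)=\sum_{j=1}^N e^{2\pi i m\cdot u_j}$ for $m\in\mathbb Z^2$, so $\widehat\mu(0)=N$. Since $\operatorname{diam}(C)<1$ and $0\le\lambda\le1$, the translate $\lambda\sigma(C)+t$ embeds in $\mathbb T^2$, so $\chi_{\lambda\sigma(C)+t}\in L^2(\mathbb T^2)$ has $m$-th Fourier coefficient $e^{-2\pi i m\cdot t}\lambda^2\widehat{\chi_C}(\lambda\sigma^{-1}m)$, where $\widehat{\chi_C}$ is the Euclidean transform. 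As $\widehat{\chi_C}(0)=|C|$, the $m=0$ term of $\sum_j\chi_{\lambda\sigma(C)+t}(u_j)$ is exactly $\lambda^2N|C|$, hence
\[
D_N(\lambda,\sigma,t):=\mathrm{card}\bigl(\mathcal P_N\cap(\lambda\sigma(C)+t)\bigr)-\lambda^2N|C|=\sum_{m\ne0}\lambda^2\widehat{\chi_C}(\lambda\sigma^{-1}m)\,\widehat\mu(m)\,e^{-2\pi i m\cdot t}.
\]
Parseval in $t$ and then Tonelli in $(\sigma,\lambda)$ give
\[
\int_0^1\!\int_{SO(2)}\!\int_{\mathbb T^2}\!\lvert D_N\rvert^2\,dt\,d\sigma\,d\lambda=\sum_{m\ne0}w(m)\,\lvert\widehat\mu(m)\rvert^2,\qquad w(m):=\int_0^1\!\lambda^4\!\int_{SO(2)}\!\lvert\widehat{\chi_C}(\lambda\sigma^{-1}m)\rvert^2\,d\sigma\,d\lambda.
\]
Writing $\Theta_C(\rho)=\int_{S^1}\lvert\widehat{\chi_C}(\rho\omega)\rvert^2\,d\omega$ for the spherical average and using that $\sigma^{-1}m$ sweeps the circle of radius $\lvert m\rvert$, one has $w(m)=c\,\lvert m\rvert^{-5}\int_0^{\lvert m\rvert}\rho^4\Theta_C(\rho)\,d\rho$.

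The geometric core is the lower bound $w(m)\ge c\lvert m\rvert^{-3}$ for all $m\ne0$, equivalently $\int_0^r\rho^4\Theta_C(\rho)\,d\rho\ge c\,r^2$ for $r\ge1$; this is precisely the averaged decay estimate supplied by the Cassels--Montgomery lemma for convex bodies, and I would isolate it as a separate lemma. The delicate point — and where the real work lies — is that for a fixed $\rho$ the quantity $\Theta_C(\rho)$ can be far smaller than $\rho^{-3}$ (for a polygon it vanishes, to leading order, at radii resonating with an edge direction), so one genuinely needs the radial average: it is only $\rho^4\Theta_C(\rho)$, integrated over a long interval, that is robustly of size $r^2$, the oscillations cancelling. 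I expect this lemma to be the main obstacle; its proof rests on convexity — a lower bound for the width of $C$ together with a van der Corput / stationary-phase analysis of $\widehat{\chi_C}$ along rays.

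Granting $w(m)\ge c\lvert m\rvert^{-3}$, it remains to show $\sum_{m\ne0}\lvert m\rvert^{-3}\lvert\widehat\mu(m)\rvert^2\ge c\,N^{1/2}$, which I would obtain by restricting to $0<\lvert m\rvert\le R$ with $R\asymp\sqrt N$. Choose once and for all $\Phi=\psi*\psi$ where $\psi\ge0$ is smooth and radial, supported in $\{\lvert\xi\rvert\le\tfrac12\}$ with $\int\psi=1$; then $\Phi\ge0$ is supported in $\{\lvert\xi\rvert\le1\}$, $\widehat\Phi=\widehat\psi^{\,2}\ge0$, and $\int\Phi=1$. By Poisson summation the periodization $K_R(x)=\sum_{m\in\mathbb Z^2}\Phi(m/R)e^{2\pi i m\cdot x}=R^2\sum_{n\in\mathbb Z^2}\widehat\Phi\bigl(R(x+n)\bigr)$ is nonnegative with $K_R(0)\ge R^2$, so
\[
\sum_{m\in\mathbb Z^2}\Phi(m/R)\lvert\widehat\mu(m)\rvert^2=\sum_{j,k}K_R(u_j-u_k)\ge N\,K_R(0)\ge N R^2 .
\]
Taking $R=\lceil(2\Phi(0)N)^{1/2}\rceil\asymp\sqrt N$ makes $NR^2\ge2\Phi(0)N^2$; subtracting the $m=0$ term $\Phi(0)N^2$ and using $\Phi\le\|\Phi\|_\infty\,\chi_{\{\lvert\xi\rvert\le1\}}$ gives $\sum_{0<\lvert m\rvert\le R}\lvert\widehat\mu(m)\rvert^2\ge c\,N^2$. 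Since $\lvert m\rvert^{-3}\ge R^{-3}$ on this range, we conclude
\[
\int_0^1\!\int_{SO(2)}\!\int_{\mathbb T^2}\!\lvert D_N\rvert^2\ge c\!\!\sum_{0<\lvert m\rvert\le R}\!\!\lvert m\rvert^{-3}\lvert\widehat\mu(m)\rvert^2\ge c\,R^{-3}\!\!\sum_{0<\lvert m\rvert\le R}\!\!\lvert\widehat\mu(m)\rvert^2\ge c\,R^{-3}N^2\asymp N^{1/2},
\]
which is \eqref{bm} (only large $N$ needs proof, so the rounding in the choice of $R$ is harmless).
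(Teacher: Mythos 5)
Two remarks before the technical assessment. First, Theorem~\ref{BM} is not proved in this paper: it is quoted from Beck~\cite{beck} and Montgomery~\cite[Ch.~6]{montgomery}, and the closest internal analogues of your two ingredients are Lemma~\ref{lemmaCassels} (the Cassels--Montgomery lemma) and Theorem~\ref{StimaDaSotto} (the directional lower bound for the dilation average of $|\widehat{\chi}_C|^2$). Second, your terminology is crossed: what you call ``the Cassels--Montgomery lemma for convex bodies'' -- the estimate $w(m)\geqslant c|m|^{-3}$, equivalently $\int_0^r\rho^4\Theta_C(\rho)\,d\rho\geqslant c r^2$ -- is not the Cassels--Montgomery lemma. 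Cassels--Montgomery (Lemma~\ref{lemmaCassels}) is the exponential-sum statement, which is precisely the thing you then prove cleanly with your Fej\'er kernel $\Phi=\psi*\psi$. The lower bound on the averaged spherical decay of $\widehat{\chi}_C$ is a separate geometric theorem, due essentially to Podkorytov~\cite{P1} (see also \cite{BHI}).

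Your Parseval reduction, the rotational averaging that produces $w(m)$, the identity $w(m)=|m|^{-5}\int_0^{|m|}\rho^4\Theta_C(\rho)\,d\rho$, the Fej\'er/Poisson argument giving $\sum_{0<|m|\leqslant R}|\widehat{\mu}(m)|^2\geqslant c N^2$ for $R\asymp\sqrt N$, and the final arithmetic $R^{-3}N^2\asymp N^{1/2}$ are all correct. Your Fej\'er argument is, in substance, a proof of the Cassels--Montgomery lemma; it differs cosmetically from the paper's Lemma~\ref{lemmaCassels}, which builds the nonnegative polynomial by a Siegel-type construction rather than by squaring a bump, but the two are interchangeable. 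You are also right that the radial average in $\lambda$ is essential and cannot be dropped: for the disk, $\widehat{\chi}_D$ vanishes on entire circles (zeros of $J_1$), so $\Theta_C(\rho)$ alone does not admit a pointwise $\rho^{-3}$ lower bound.

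The genuine gap is the step you flag as ``the main obstacle'': you assert, but do not prove, that $w(m)\geqslant c|m|^{-3}$ for every planar convex body. This is the geometric heart of the theorem and cannot be outsourced to the Cassels--Montgomery machinery; it needs an argument of the type used for Theorem~\ref{StimaDaSotto}, namely the Podkorytov-type two-sided bound on $\int_{\alpha\rho}^{\beta\rho}|\widehat f(s)|^2\,ds$ for a concave profile $f$, together with an integration over directions. Note that the bound you need is genuinely stronger than what one gets by applying the paper's Theorem~\ref{StimaDaSotto} with the universal exponent $\sigma=1$ from Proposition~\ref{stimadeltadasotto} and then integrating over $\Theta$: that route gives only $\int_{r/2\leqslant|\xi|\leqslant r}|\widehat{\chi}_C(\xi)|^2\,d\xi\geqslant c r^{-3}$, whereas you need $\geqslant c r^{-1}$. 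The missing input is the fact that the set of directions where the chord estimate is bad is correspondingly thin, so the angular integral compensates; making this quantitative (as Podkorytov does) is precisely the work your sketch leaves undone. Until that lemma is supplied, the proof is incomplete.
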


The lower bound in (\ref{bm}) is sharp for every convex body. This follows
from a classical result of D. Kendall \cite{kendall} on lattice points,
together with a Fourier analytic result proved in\ \cite{P1} (see also
\cite[Ch. 8]{travaglini} and \cite{BHI}).

The integration over dilations in (\ref{bm}) cannot be avoided (see \cite{TT},
see also \cite{BCGT}, \cite{PS}, \cite[Ch. 11]{travaglini} for results in
higher dimensions). The above result of Davenport (see also \cite{CT}) shows
that also the integration over rotations is necessary in (\ref{bm}).

If we replace $C$ with a disk $D$ in Theorem \ref{BM}, then the integration
over rotations is meaningless and (\ref{bm}) reduces to the following (sharp)
inequality.
\[
\int_{0}^{1}\int_{\mathbb{T}^{2}}\left\vert \mathrm{card}\left(
\mathcal{P}_{N}\mathcal{\cap}\left(  \lambda D+t\right)  \right)  -\lambda
^{2}N\left\vert D\right\vert \right\vert ^{2}~dtd\lambda\geqslant cN^{1/2}\;.
\]

In short, after averaging the discrepancy over translations and dilations, we
have $\log\left(  N\right)  $ as a (sharp) lower estimate for the case of the
square and $N^{1/2}$ as a (sharp) lower estimate for the case of the
disk.\medskip

From now on we will always average the discrepancies over translations and
dilations. \medskip

The above $\log\left(  N\right)  $ lower estimate has been extended from the
case of a square to the case of convex polygons in \cite{drmota1}, while the
$N^{1/2}$ lower estimate has been extended from the case of a disk to the case
of convex bodies with $\mathcal{C}^{12}$ boundary having everywhere positive
curvature in \cite{drmota0} (see also \cite[Ch. 7]{BC}). In \cite{BT} we
constructed an example of a convex body $C$ whose boundary is $\mathcal{C}%
^{2}$ with a flat point, such that the $N^{1/2}$ lower bound still holds true.

Related results have been proved by J. Beck in \cite{beck2}, where he obtained
lower bounds for the discrepancy in terms of the quality of approximation of
$C$ through inscribed polygons. A related point of view has been considered in
\cite{BIT}.\bigskip

We prove that every convex body with $\mathcal{C}^{2}$ boundary, regardless of
curvature, satisfies the sharp $N^{1/2}$ lower bound. We also prove that if we
are not in the case of a polygon and the boundary is piecewise $\mathcal{C}%
^{2}$, then the lower bound $N^{2/5}$ holds true. Moreover, for every
$2/5\leqslant a\leqslant1/2$ we give a geometric condition that implies the
lower bound $N^{a}$. We also give an explicit convex body which admits the
upper bound $N^{a}$ for the discrepancy.\bigskip

\section{Main results}

Here we state the main results of the paper. The proofs are in the next
sections.\medskip

The following Theorem \ref{C2} is a particular case of both Theorem
\ref{Theorem B-a} and Theorem \ref{irredistr} below.\medskip

\begin{theorem}
\label{C2}Let $C\subset\mathbb{T}^{2}$ be a convex body with $\mathcal{C}^{2}$
boundary. Then there exists $c>0$ such that for every set $\mathcal{P}_{N}$ of
$N$\ points in $\mathbb{T}^{2}$ we have%
\begin{equation}
\int_{0}^{1}\int_{\mathbb{T}^{2}}\left\vert \mathrm{card}\left(
\mathcal{P}_{N}\cap\left(  \lambda C+t\right)  \right)  -\lambda
^{2}N\left\vert C\right\vert \right\vert ^{2}~dtd\lambda\geqslant
cN^{1/2}.\label{uno}%
\end{equation}

\end{theorem}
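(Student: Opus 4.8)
The plan is to use the Fourier-analytic / Montgomery-type approach that underlies Theorem \ref{BM}, specialized to the situation where no rotational average is available, and to feed in a suitable lower bound for the $L^2$ average decay of the Fourier transform $\widehat{\chi_C}$. Writing $D_t(\lambda) = \mathrm{card}\left(\mathcal{P}_N \cap (\lambda C + t)\right) - \lambda^2 N |C|$, one expands the translation variable $t$ in a Fourier series on $\mathbb{T}^2$. By Parseval, for each fixed $\lambda$ the quantity $\int_{\mathbb{T}^2} |D_t(\lambda)|^2\, dt$ equals a sum over the nonzero frequencies $m \in \mathbb{Z}^2 \setminus \{0\}$ of $\bigl|\sum_{j} e^{-2\pi i m\cdot u_j}\bigr|^2 \cdot \lambda^2 |\widehat{\chi_C}(\lambda m)|^2$ (the zero frequency is killed by the subtraction of the expected count $\lambda^2 N|C|$). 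Thus the whole left-hand side of \eqref{uno} becomes $\sum_{m \neq 0} |\widehat{\mathcal{P}_N}(m)|^2 \int_0^1 \lambda^2 |\widehat{\chi_C}(\lambda m)|^2\, d\lambda$, where $\widehat{\mathcal{P}_N}(m) = \sum_j e^{-2\pi i m\cdot u_j}$.

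The next step is to extract from this expression a clean lower bound that does not depend on the specific position of the points. The standard device (this is where the Cassels--Montgomery lemma enters) is that one does not control $|\widehat{\mathcal{P}_N}(m)|^2$ for individual $m$, but one can average against a nonnegative weight: if $w(m) \geq 0$ satisfies $w(m) \leq \int_0^1 \lambda^2 |\widehat{\chi_C}(\lambda m)|^2\, d\lambda$ for all $m$, and if additionally one can arrange that $\sum_{m\neq 0} w(m) |\widehat{\mathcal{P}_N}(m)|^2 \gtrsim (\text{something})\cdot \sum_{m} w(m)$ using $|\widehat{\mathcal{P}_N}(0)|^2 = N^2$ via positive-definiteness / a Fejér-type kernel argument, one obtains a lower bound of the form $N$ times (a tail of $w$) or, more precisely, the lemma of Cassels--Montgomery lets one conclude $\sum_{m\neq 0} w(m)|\widehat{\mathcal{P}_N}(m)|^2 \geq c N \sum_{0 < |m| \leq R} w(m)$ for an appropriate range $R$ depending on $N$. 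The key geometric input is then a lower bound for $\int_0^1 \lambda^2 |\widehat{\chi_C}(\lambda m)|^2\, d\lambda$: when $\partial C \in \mathcal{C}^2$, a stationary-phase / van der Corput estimate for $\widehat{\chi_C}(\xi)$ combined with an averaging over the dilation $\lambda$ shows that this dilation-averaged square decays like $|m|^{-3}$ on average over directions — this is the ``geometric type estimate for the average decay of the Fourier transform'' advertised in the abstract. Summing $|m|^{-3}$ over $0 < |m| \leq R$ gives $\asymp \log R$... no: one must be more careful, since $\sum_{0<|m|\le R}|m|^{-3} \asymp$ const; the right bookkeeping is that $w$ is comparable to $|m|^{-3}$ and the Cassels--Montgomery cutoff is at $R \asymp N^{1/2}$, giving $N \cdot \sum_{0<|m|\le N^{1/2}} |m|^{-3}$, and the decisive point is instead that individual large frequencies near $|m|\asymp N^{1/2}$ each contribute $\asymp N \cdot N^{-3/2}\cdot(\#\{m:|m|\asymp N^{1/2}\}) \asymp N\cdot N^{-3/2}\cdot N^{1/2} = N^{1/2}$ after the directional average — i.e. one localizes to the dyadic shell $|m| \asymp N^{1/2}$ and counts the number of lattice points there.

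Concretely, the argument I would carry out is: (i) reduce \eqref{uno} to $\sum_{m\neq 0} |\widehat{\mathcal{P}_N}(m)|^2\, \Phi_C(m) \geq cN^{1/2}$ where $\Phi_C(m) = \int_0^1 \lambda^2|\widehat{\chi_C}(\lambda m)|^2\,d\lambda$; (ii) prove the geometric estimate that, averaged over the shell $|m|\in[\rho, 2\rho]$, $\Phi_C(m) \gtrsim \rho^{-3}$ (using $\mathcal{C}^2$-ness of $\partial C$ so that curvature is bounded above, forcing $|\widehat{\chi_C}|$ not to decay faster than $|\xi|^{-3/2}$ on a substantial set of directions — this requires a lower bound for Fourier decay, which for a $\mathcal{C}^2$ convex curve one gets by integrating by parts twice and noting that the boundary is not flat everywhere unless $C$ degenerates, or by a more robust two-sided stationary-phase analysis); (iii) apply the Cassels--Montgomery lemma with the weight $w$ supported on the shell $|m|\asymp N^{1/2}$ and with $w(m)$ a smoothed version of $\Phi_C(m)$, obtaining $\sum_{m\neq 0}|\widehat{\mathcal{P}_N}(m)|^2 w(m) \geq cN\sum_m w(m) \geq c N \cdot \rho^{-3}\cdot \rho^2 = cN\rho^{-1} = cN^{1/2}$ with $\rho = N^{1/2}$; (iv) chain the inequalities. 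The main obstacle, I expect, is step (ii): one needs a \emph{lower} bound on an average of $|\widehat{\chi_C}|$, and Fourier transforms of characteristic functions of convex sets can have zeros and the decay can vary wildly with direction for a merely $\mathcal{C}^2$ boundary (no curvature assumption), so the geometric estimate must be genuinely an \emph{average} statement over directions and dilations, and proving it cleanly — presumably by relating $\Phi_C(m)$ to the measure of the ``annular'' region swept by $\partial(\lambda C)$ and invoking a Cassels--Montgomery-type second-moment identity a second time — is the technical heart of the matter. The pairing of the dilation average with the direction average is exactly what makes the sharp exponent $1/2$ (rather than something smaller) come out, and also what shows why the extra $\mathcal{C}^2$ regularity of $\partial C$, as opposed to mere piecewise $\mathcal{C}^2$, improves $N^{2/5}$ to $N^{1/2}$.
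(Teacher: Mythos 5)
Your overall strategy is the same as the paper's (this is precisely the route the paper takes through Theorem \ref{irredistr}: Parseval expansion of the discrepancy into $\sum_{m\neq 0}|\widehat{\mathcal{P}_N}(m)|^2\,\Phi_C(m)$, a lower bound for the dilation-averaged quantity $\Phi_C(m)\gtrsim |m|^{-3}$, and the Cassels--Montgomery lemma with $\Omega$ a disk of radius $\asymp\sqrt{N}$). You also correctly identify where the real difficulty lies: proving a \emph{lower} bound for the averaged $|\widehat{\chi_C}|^2$.

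However, that step — your step (ii) — is where there is a genuine gap, and the mechanism you propose does not work. Integrating by parts and stationary-phase estimates on a $\mathcal{C}^2$ convex curve with no curvature hypothesis give \emph{upper} bounds for $|\widehat{\chi_C}|$, never lower bounds, and a ``two-sided stationary phase'' is not available at this level of regularity precisely because the curvature may vanish on large arcs, making the pointwise decay wildly direction-dependent. The paper's actual mechanism is quite different: it first translates the $\mathcal{C}^2$ hypothesis into a geometric chord condition $|\gamma_\Theta(\delta)|\geq c\,\delta^{1/2}$ (Remark \ref{CordaC2}, using only $\varphi''\leq c$ and a double integration), and then proves a two-sided estimate for the average of $|\widehat f|^2$ over a dyadic annulus in terms of the chord length $\mu_f(\rho^{-1})$ (Lemma \ref{Lemma Media}, feeding into Theorem \ref{StimaDaSotto}). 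The proof of that two-sided estimate is not Fourier-asymptotic at all: it proceeds through second-order difference operators $\Delta_h^2 f$ and the $L^2$ modulus of smoothness $\omega_2(f,\nu)$, and crucially exploits the \emph{concavity} of the one-dimensional profile $f(t)=\int\chi_C(t,\cdot)$ to get the comparison $\omega_2(f,\nu)\asymp \nu^{1/2}\mu_f(\nu)$ in Proposition \ref{StimaBilatera}. This concavity-driven argument is what makes the lower bound possible despite the possible zeros and erratic direction-dependence of $\widehat{\chi_C}$; your sketch contains no replacement for it.

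A secondary imprecision: you invoke Cassels--Montgomery for a weight localized to the dyadic shell $|m|\asymp N^{1/2}$, but Lemma \ref{lemmaCassels} gives a lower bound on $\sum_{m\in(\Omega\setminus U)\cap\mathbb Z^2}|\widehat{\mathcal P_N}(m)|^2$ for $\Omega$ a convex symmetric body and $U$ a \emph{fixed} neighborhood of the origin; it does not localize to an annulus of thickness comparable to $N^{1/2}$, since subtracting the inner disk destroys the lower bound. The paper sidesteps this by taking $\Omega$ to be the full disk of radius $c_3\sqrt N$ and bounding $\Phi_C(m)\geq c(c_3\sqrt N)^{-3}$ uniformly on $\Omega\setminus U$ — the worst-case (largest $|m|$) value — which already yields $cN^{1/2}$; no shell decomposition is needed. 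This is a bookkeeping issue rather than a fatal flaw, but your argument as stated is not correct and the clean way to run it is the paper's.

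\end{document}
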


The case of a disk shows that (\ref{uno}) is sharp.

A few definitions and preliminary results are necessary before we state our
next theorems.\medskip

We are going to consider convex planar bodies which are not polygons and have
piecewise $\mathcal{C}^{2}$ boundaries. For the discrepancy associated to
these bodies we prove sharp lower bounds depending on the regularity of the
boundary, which we measure in terms of lengths of chords. In some cases this
condition can be interpreted as an \textquotedblleft inner disk
condition\textquotedblright. See Remark \ref{remarkdfdf}.

\begin{definition}
\label{firstdef} Let $C\subset\mathbb{T}^{2}$ be a convex body. For every unit
vector $\Theta=\left(  \cos\theta,\sin\theta\right)  $ and $\delta>0$ we
consider the chord%
\[
\gamma_{\Theta}\left(  \delta\right)  =\left\{  x\in C:x\cdot\Theta=\inf_{y\in
C}\left(  y\cdot\Theta\right)  +\delta\right\}
\]
and its length $\left\vert \gamma_{\Theta}\left(  \delta\right)  \right\vert
$. See Figure \ref{chord7}. \begin{figure}[h]
\begin{center}
\includegraphics[width=60mm]{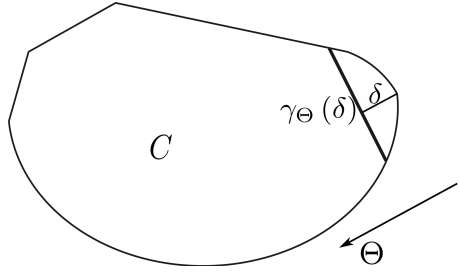}
\end{center}
\caption{The chord $\gamma_{\Theta}\left(  \delta\right)  $. }%
\label{chord7}%
\end{figure}
\end{definition}

Observe that $\left\vert \gamma_{\Theta}\left(  \delta\right)  \right\vert $
cannot be too small. Namely we have the following result.

\begin{proposition}
\label{stimadeltadasotto} Let $C$ be a planar convex body. Then there exist
$\delta_{0},c>0$ such that for $0<\delta<\delta_{0}$ and every direction
$\Theta$ we have%
\[
\left\vert \gamma_{\Theta}\left(  \delta\right)  \right\vert \geqslant
c\delta.
\]

\end{proposition}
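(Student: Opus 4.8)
The plan is to reduce the statement to a simple comparison between the chord $\gamma_{\Theta}(\delta)$ and a fixed triangle (or disk) sitting inside $C$, using convexity. First I would fix an interior point $p_0$ of $C$ and a radius $r>0$ with $B(p_0,2r)\subset C$; such $r$ exists since $C$ is a convex body (nonempty interior). The key geometric idea is that the chord at "depth" $\delta$ in direction $\Theta$ must cross the segment of the inscribed disk cut at the same depth, and the latter has length bounded below linearly in $\delta$ for $\delta$ small.

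More precisely, after a rotation I may assume $\Theta=(0,1)$ and, by translating, that $\inf_{y\in C}(y\cdot\Theta)=0$, so $\gamma_\Theta(\delta)=\{x\in C:x_2=\delta\}$. Let $q=(q_1,q_2)\in C$ be any point with $q_2=0$ (a supporting point in direction $-\Theta$; it exists because $C$ is compact). Consider the two points $a^{\pm}=p_0\pm(r,0)\in B(p_0,2r)\subset C$, which lie on the horizontal line through $p_0$, say at height $h:=p_0\cdot\Theta-\inf_C(\cdot\,\Theta)\ge 2r>0$. Now I would take the two triangles with vertex $q$ and opposite side the segment $[a^-,a^+]$: by convexity this whole triangle $T$ lies in $C$. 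Intersecting $T$ with the horizontal line at height $\delta$ (for $0<\delta<h$, in particular $\delta<\delta_0:=r$) gives, by similar triangles, a segment of length $(\delta/h)\cdot 2r$. Since this segment is contained in $\gamma_\Theta(\delta)$, we get
\[
|\gamma_\Theta(\delta)|\ \ge\ \frac{2r}{h}\,\delta.
\]
The ratio $h$ is bounded above by the diameter of $C$, which is a constant, so $2r/h\ge c>0$ is a constant independent of $\Theta$ and $\delta$, and this is the claimed bound with $\delta_0=r$.

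The only point requiring a little care — and the one I would present most carefully — is the uniformity in $\Theta$: one must check that the constant $c$ and the threshold $\delta_0$ can be chosen once and for all, not depending on the direction. This is immediate from the construction above, since the inscribed ball $B(p_0,2r)$ is the same for all directions, $h\le\operatorname{diam}(C)$ uniformly, and the supporting point $q$ always exists by compactness; hence $c=2r/\operatorname{diam}(C)$ and $\delta_0=r$ work uniformly. (If one prefers to avoid choosing $q$, one can instead intersect the inscribed ball $B(p_0,2r)$ itself with the line at height $\delta$ once $\delta$ is within $2r$ of the bottom of the ball, but the supporting-point argument gives the bound for all small $\delta$ with no further fuss.)
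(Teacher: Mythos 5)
Your proof is correct and follows the same strategy as the paper's: fix an interior point $P_0$ and a small disk inside $C$, then for each direction compare the chord with the slice of an inscribed triangle whose apex is the supporting point in direction $-\Theta$ and whose base is a segment through $P_0$ of fixed length. Your choice of coordinates (rotating so that $\Theta$ is vertical) reduces the paper's angular computation to a one-line similar-triangles estimate, but the underlying idea and the uniformity argument are identical.
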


Observe that if $C$ is a polygon, then the above bound is sharp for all but a
finite number of directions. If $\partial C$ is smooth enough, then
$\left\vert \gamma_{\Theta}\left(  \delta\right)  \right\vert $ can be much
larger. See Remark \ref{CordaC2} and Theorem \ref{Equivalenza} below.

\begin{theorem}
\label{Theorem B-a}Let $C$ be a convex body and let $\Theta=\left(  \cos
\theta,\sin\theta\right)  $. Assume the existence of constants $\delta
_{0},c_{1},c_{2}>0$, $1/2\leqslant\sigma\leqslant1$ and an interval $I$ in
$\left(  -\pi,\pi\right)  $ such that for every $0<\delta\leqslant\delta_{0}$
we have%
\begin{equation}
\left\{
\begin{array}
[c]{ll}%
\left\vert \gamma_{-\Theta}\left(  \delta\right)  \right\vert +\left\vert
\gamma_{\Theta}\left(  \delta\right)  \right\vert \geqslant c_{1}\delta^{1/2}
& \ \text{for every }\theta\in I,\\[3mm]%
\left\vert \gamma_{-\Theta}\left(  \delta\right)  \right\vert +\left\vert
\gamma_{\Theta}\left(  \delta\right)  \right\vert \geqslant c_{2}%
\delta^{\sigma} & \ \text{for every }\theta\notin I.
\end{array}
\right.  \label{due}%
\end{equation}
Then there exists $c>0$ such that for every set $\mathcal{P}_{N}$ of $N$
points in $\mathbb{T}^{2}$ we have
\begin{equation}
\int_{0}^{1}\int_{\mathbb{T}^{2}}\left\vert \mathrm{card}\left(
\mathcal{P}_{N}\cap\left(  \lambda C+t\right)  \right)  -\lambda
^{2}N\left\vert C\right\vert \right\vert ^{2}~dtd\lambda\geqslant
cN^{2/\left(  2\sigma+3\right)  }\ . \label{sigma_basso}%
\end{equation}

\end{theorem}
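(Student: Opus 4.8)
The plan is to use the Fourier-analytic method of Montgomery and Beck, exploiting the average decay of $\widehat{\chi_C}$ that is controlled by the chord length hypothesis \eqref{due}. First I would introduce the discrepancy function $D_\lambda(t)=\mathrm{card}\left(\mathcal{P}_N\cap(\lambda C+t)\right)-\lambda^2 N|C|$, expand it in a Fourier series in $t\in\mathbb{T}^2$, and apply Parseval to write
\[
\int_{\mathbb{T}^2}|D_\lambda(t)|^2\,dt=\sum_{k\in\mathbb{Z}^2\setminus\{0\}}\left|\widehat{\mathcal{P}_N}(k)\right|^2\left|\widehat{\chi_{\lambda C}}(k)\right|^2,
\]
where $\widehat{\mathcal{P}_N}(k)=\sum_{j=1}^N e^{-2\pi i k\cdot u_j}$ and $\widehat{\chi_{\lambda C}}(k)=\lambda^2\widehat{\chi_C}(\lambda k)$. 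Integrating over $\lambda\in[0,1]$ and changing variables, the key quantity becomes, for each fixed $k$, an integral of $|\widehat{\chi_C}(\lambda k)|^2$ over a range of $\lambda$; the lower bound will come from averaging $|\widehat{\chi_C}|^2$ over annuli.

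The central geometric input is a lower bound for the spherical average $\int_{S^1}|\widehat{\chi_C}(R\Theta)|^2\,d\Theta$ as $R\to\infty$. The classical stationary-phase heuristic gives $\widehat{\chi_C}(R\Theta)\approx R^{-1}\left(|\gamma_\Theta(1/R)|^{1/2}+|\gamma_{-\Theta}(1/R)|^{1/2}\right)$ up to oscillating factors, so that $|\widehat{\chi_C}(R\Theta)|^2$ is of order $R^{-2}\left(|\gamma_\Theta(1/R)|+|\gamma_{-\Theta}(1/R)|\right)$ after integrating out the oscillation in $R$. Under \eqref{due}, for $\theta\in I$ this is $\gtrsim R^{-2}R^{-1/2}=R^{-5/2}$, and for $\theta\notin I$ it is $\gtrsim R^{-2}R^{-\sigma}$. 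One should establish this as a rigorous averaged estimate — a bound of Cassels–Montgomery type — namely that $\int_1^2\int_{S^1}|\widehat{\chi_C}(Rr\Theta)|^2\,d\Theta\,dr\gtrsim R^{-5/2}$ (the interval $I$ contribution dominates), and more refined versions localizing near and far from $I$. I expect this averaged Fourier-decay lemma to be the main obstacle: making the stationary-phase / van der Corput estimates uniform in $\Theta$ and correctly lower-bounded (rather than the easier upper bounds) requires care precisely where the chord-length exponent transitions, and is where the hypothesis \eqref{due} must be fed in cleanly.

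Granting the averaged decay estimate, I would finish by a counting argument on the Fourier side. Restrict the sum over $k$ to an annulus $R\le|k|\le 2R$ with $R$ chosen comparably to $N^{1/(2\sigma+3)}$ up to constants; there are $\sim R^2$ such lattice points. The total mass $\sum_{R\le|k|\le 2R}|\widehat{\mathcal{P}_N}(k)|^2$ is at least $cR^2$ for a positive proportion of dyadic scales unless the points are extremely well-distributed — more precisely one uses that $\sum_{k\ne 0}|\widehat{\mathcal{P}_N}(k)|^2$ diverges (it is formally $N\delta_0$-like) together with a pigeonhole over dyadic annuli to find a scale $R\lesssim$ our target where $\sum_{|k|\sim R}|\widehat{\mathcal{P}_N}(k)|^2\gtrsim R^2$; this is the standard Montgomery mechanism and the threshold $R\sim N^{1/(2\sigma+3)}$ is exactly where $R^2\cdot($decay$)$ balances against $N$. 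Combining, $\int_0^1\int_{\mathbb{T}^2}|D_\lambda|^2\gtrsim R^{-5/2}\cdot R^2\cdot(\text{something})$... more carefully, the per-$k$ $\lambda$-average contributes a further factor, and collecting the exponents yields the bound $cN^{2/(2\sigma+3)}$ claimed in \eqref{sigma_basso}. The only delicate bookkeeping is tracking how the two regimes in \eqref{due} interact when $\sigma>1/2$ — for $\sigma=1/2$ the whole circle contributes at the rate $R^{-5/2}$ and one recovers the exponent $1/2$, consistent with Theorem \ref{C2}.
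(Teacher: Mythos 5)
Your Fourier–Parseval framing and the idea that the averaged decay of $\widehat{\chi}_C$ in terms of chord lengths should be the geometric input are both correct, and Cassels--Montgomery does supply the lattice-side lower bound. But there are two genuine gaps that would derail the proof.

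First, the stationary-phase heuristic is off by a square: the correct geometric estimate (Podkorytov's bound, used throughout the paper) is $|\widehat{\chi}_C(\rho\Theta)|\lesssim \rho^{-1}\bigl(|\gamma_\Theta(\rho^{-1})|+|\gamma_{-\Theta}(\rho^{-1})|\bigr)$, with \emph{no} square root on the chord lengths, and the matching averaged lower bound (Theorem~\ref{StimaDaSotto}) gives $\int_{1/2}^1|\widehat{\chi}_C(\tau\rho\Theta)|^2\,d\tau\gtrsim\rho^{-2}|\gamma_\Theta(\rho^{-1})|^2$. Under \eqref{due}, for $\theta\in I$ this is $\gtrsim\rho^{-3}$ (not $\rho^{-5/2}$), and for $\theta\notin I$ it is $\gtrsim\rho^{-2-2\sigma}$. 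Using $\rho^{-5/2}$ would make the final exponents come out wrong.

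Second, and more fundamentally, the proposal to restrict to a dyadic annulus $R\le|k|\le 2R$ with $R\sim N^{1/(2\sigma+3)}$ and pigeonhole cannot work. For $\sigma>1/2$ the area of such an annulus is $\sim R^2\ll N$, and Lemma~\ref{lemmaCassels} gives nothing below area $\sim N$: for a rectangular grid $\mathcal{P}_N$ the spectral mass in that annulus is literally zero, so no pigeonhole can produce $\gtrsim R^2$. If instead you take $R\sim\sqrt{N}$ to make Cassels--Montgomery bite, you cannot control \emph{where} in angle the mass sits, and the worst-case directional decay $\rho^{-2-2\sigma}$ gives only $N^{1-\sigma}$ --- this is exactly Theorem~\ref{irredistr}, which is strictly weaker than \eqref{sigma_basso} when $\sigma>1/2$. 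The missing idea in the paper is to replace annuli/disks by a fan of \emph{long thin rotated rectangles} $R_j=r_{j\psi}R_0$ of dimensions $X\times Y$ with $XY\sim N$ and $X\gg Y$, rotated through angles covering $I$. Each rectangle has area $\sim N$, so Cassels--Montgomery applies to each individually to produce mass $\gtrsim N^2$; being elongated in the direction of the strong $\rho^{-3}$ decay, the rectangles let one build a majorant $\Gamma\sum_j\chi_{R_j}(m)$ lying below the two-regime decay \eqref{c0}. Optimizing $X,Y$ (to $X\sim N^{(2\sigma+1)/(2\sigma+3)}$, $Y\sim N^{2/(2\sigma+3)}$) then yields $N^{2/(2\sigma+3)}$. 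Without this angular localization device, the two regimes of \eqref{due} cannot interact to give the claimed exponent.
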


The following result shows that Theorem \ref{Theorem B-a} is sharp.

\begin{theorem}
\label{Theorem B-b}For every $1/2\leqslant\sigma\leqslant1$ there exist $c>0$,
an explicit construction of a planar convex body $C_{\sigma}$ that satisfies
(\ref{due}), and finite sets $\mathcal{P}_{N_{j}}\subset\mathbb{T}^{2}$ of
cardinality $N_{j}\rightarrow+\infty$ such that%
\begin{equation}
\int_{0}^{1}\int_{\mathbb{T}^{2}}\left\vert \mathrm{card}\left(
\mathcal{P}_{N_{j}}\mathcal{\cap}\left(  \lambda C+t\right)  \right)
-\lambda^{2}N_{j}\left\vert C\right\vert \right\vert ^{2}~dtd\lambda\leqslant
cN_{j}^{2/\left(  2\sigma+3\right)  }\ . \label{sigma_alto}%
\end{equation}

\end{theorem}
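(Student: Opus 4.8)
The strategy is to build, for each $\sigma \in [1/2, 1]$, an explicit convex body $C_\sigma$ whose boundary is flat (i.e., piecewise $\mathcal{C}^2$ with a chord-length behaving like $\delta^\sigma$) in most directions but curved like a disk in a small arc of directions $I$, and then to exhibit point sets saturating the bound. The natural candidate for $C_\sigma$ is a convex body whose boundary, near a distinguished boundary point, looks like the graph of $|x|^p$ for a suitable exponent $p = p(\sigma)$: a short computation with Definition \ref{firstdef} shows that a boundary arc of the form $y = |x|^p$ produces $|\gamma_\Theta(\delta)| \asymp \delta^{1/p}$ in the normal direction at that point, so choosing $p$ with $1/p = \sigma$, i.e. $p = 1/\sigma \in [1,2]$, yields the required $\delta^\sigma$ behaviour in the "bad" directions; in the complementary arc $I$ we simply glue in a circular arc, which gives $|\gamma_\Theta(\delta)| \asymp \delta^{1/2}$ there. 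One must check that these pieces can be assembled into a genuine convex body (monotone turning of the tangent), which is routine, and that (\ref{due}) holds with these exponents.

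The heart of the matter is the upper bound (\ref{sigma_alto}). Here I would follow the Fourier-analytic scheme that underlies the Beck–Montgomery circle of ideas (and the Davenport-type constructions): pass to the Fourier side, where
\[
\int_0^1\int_{\mathbb{T}^2}\left|\mathrm{card}(\mathcal{P}_N \cap (\lambda C + t)) - \lambda^2 N |C|\right|^2 \, dt\, d\lambda
\]
is controlled by $N \int_0^1 \sum_{k \neq 0} |\widehat{\chi_C}(\lambda k)|^2 \, d\lambda$ when $\mathcal{P}_N$ is taken to be (a scaled copy of) the integer lattice, plus lower-order terms; more precisely one takes $\mathcal{P}_{N_j}$ to be the image of $\frac{1}{M}\mathbb{Z}^2 \cap \mathbb{T}^2$ under a generic small rotation/translation, with $N_j = M^2$. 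The size of the right-hand side is then governed by the average decay of $\widehat{\chi_{C_\sigma}}$. For a convex body with a boundary arc of contact exponent $p = 1/\sigma$, stationary phase gives $|\widehat{\chi_{C_\sigma}}(\rho\Theta)| \lesssim \rho^{-1 - 1/p} = \rho^{-1-\sigma}$ for $\Theta$ in the bad arc and the stronger $\rho^{-3/2}$ for $\Theta \in I$; squaring, integrating in $\lambda$, and summing the resulting series over $k$ produces a bound of order $N^{2/(2\sigma+3)}$ — the exponent matches because the dominant contribution comes from frequencies $|k| \asymp N^{1/(2\sigma+3)}$ where the two competing error terms (the "discrete" lattice term and the "Fourier tail" term) balance. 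The explicit convex body furnishing the matching upper bound for the discrepancy is exactly $C_\sigma$.

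The main obstacle, I expect, is twofold. First, the Fourier transform estimate for $C_\sigma$ near the transition directions (where the circular arc is glued to the $|x|^{1/\sigma}$ arc) must be handled carefully: the asymptotics degenerate there, and one needs a uniform bound — either via a careful stationary-phase analysis with the non-degeneracy quantified in terms of the distance to $\partial I$, or by invoking a Cassels–Montgomery-type averaging lemma to absorb the bad directions in an $L^2$ (rather than $L^\infty$) sense. Second, one must verify that the lattice construction is genuinely admissible: choosing the rotation and translation so that no dilate $\lambda \partial C_\sigma$ contains an atypically large number of lattice points, which is where a Cassels–Montgomery-type lemma (on the measure of dilations for which a convex curve is "close" to the lattice) enters. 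Once these two technical points are in place, the computation of the exponent is the elementary optimization sketched above, and the lower bound half follows directly from Theorem \ref{Theorem B-a} applied to $C_\sigma$.
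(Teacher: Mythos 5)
Your choice of $C_{\sigma}$ (boundary locally of the form $y=|x|^{1/\sigma}$ glued to strictly curved arcs) agrees with the paper, and the intended verification of (\ref{due}) is the right computation. But the proposed point set is where the argument genuinely fails. You take $\mathcal{P}_{N}$ to be a (rotated, translated) copy of the isotropic grid $\tfrac{1}{M}\mathbb{Z}^{2}$ with $N=M^{2}$. On the Fourier side the discrepancy is then comparable to $M^{4}\sum_{n\neq 0}|\widehat{\chi}_{C_{\sigma}}(Mn)|^{2}$, and the dominant contribution comes not from the flat directions but from the strictly convex ones: a single frequency such as $n=(1,0)$, where the decay is of order $\rho^{-3/2}$, already contributes $M^{4}\cdot M^{-3}=M=N^{1/2}$. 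A rotation or translation cannot remove the $O(1)$-many frequency vectors of modulus $\asymp M$ pointing in curved directions, so the isotropic grid can never do better than $N^{1/2}$. For $\sigma>1/2$ this is strictly larger than $N^{2/(2\sigma+3)}$, so your construction does not saturate (\ref{sigma_alto}); the extra regularity of $C_{\sigma}$ in the flat direction is simply not exploited by a square lattice.

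What the paper actually does is take an \emph{anisotropic} grid $u_{k,\ell}=(k/K,\ell/L)$ with $K\approx N^{(2\sigma+1)/(2\sigma+3)}$ and $L\approx N^{2/(2\sigma+3)}$, $N=KL$. The extra free parameter (the aspect ratio $K/L\approx N^{(2\sigma-1)/(2\sigma+3)}$) is exactly what allows one to balance the contribution from curved directions (where $|\widehat{\chi}_{C_{\sigma}}|\lesssim\rho^{-3/2}$, controlled by $K$) against the contribution from near-flat directions (where $|\widehat{\chi}_{C_{\sigma}}|\lesssim\rho^{-1-\sigma}$ in a $\rho$-dependent cone, controlled by $L$); the optimization $K^{2}=L^{1+2\sigma}$, $KL=N$, forces precisely the exponent $2/(2\sigma+3)$. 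Your heuristic about "balancing a lattice term against a Fourier tail term at $|k|\asymp N^{1/(2\sigma+3)}$" does not correspond to this and, as noted, gives the wrong exponent with an isotropic lattice. Also, be careful with the refined form of the Fourier decay: the bound $\rho^{-1-\sigma}$ holds only in the shrinking cone $|\theta-\pi/2|\lesssim\rho^{-1+\sigma}$, with an interpolating bound $\rho^{-3/2}|\theta-\pi/2|^{(2\sigma-1)/(2(1-\sigma))}$ outside it; this intermediate regime must be summed separately (it is the set $\Gamma_{2}$ in the paper) and its control is essential to the final estimate, not a technicality one can absorb via a Cassels--Montgomery-type lemma (which the paper uses only for the lower bound, Theorem \ref{Theorem B-a}, not here).
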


\begin{remark}
\label{CordaC2}It is not difficult to see that if $\partial C$ is
$\mathcal{C}^{2}$ then (\ref{due}) is true with $\sigma=1/2$ (therefore
Theorem \ref{C2} is a consequence of Theorem \ref{Theorem B-a}). Indeed, a
suitable choice of coordinates allows us to assume that the origin belongs to
$\partial C$ and that $\left(  0,1\right)  $ is the inward unit normal at the
origin. Hence $\partial C$ coincides locally with the graph of a
$\mathcal{C}^{2}$ function $\varphi\left(  x\right)  $ which satisfies
$0\leq\varphi^{\prime\prime}\left(  x\right)  \leqslant c$, where $c$ depends
only on $C$. A repeated integration of this inequality yields $cx^{2}%
-\varphi\left(  x\right)  \geqslant0$, which in turns gives (\ref{due}) with
$\sigma=1/2$. The above argument can be repeated for every direction. Then
(\ref{due}) holds uniformly for $\sigma=1/2$ and every $\Theta$.
\end{remark}

Observe that the exponent $2/\left(  2\sigma+3\right)  $ in (\ref{sigma_basso}%
) and (\ref{sigma_alto}) takes all values between $2/5$ and $1/2$. The next
proposition is a corollary of Proposition \ref{stimadeltadasotto} and Theorem
\ref{Theorem B-a} and shows that $N^{2/5}$ is a general lower bound for convex
planar bodies that are not polygons and have piecewise $\mathcal{C}^{2}$ boundary.

\begin{proposition}
\label{Prop2/5}Let $C$ be a convex planar body which is not a polygon and has
piecewise $\mathcal{C}^{2}$ boundary. Then there exists $c>0$ such that for
every set $\mathcal{P}_{N}$ of $N$ points in $\mathbb{T}^{2}$ we have
\[
\int_{0}^{1}\int_{\mathbb{T}^{2}}\left\vert \mathrm{card}\left(
\mathcal{P}_{N}\mathcal{\cap}\left(  \lambda C+t\right)  \right)  -\lambda
^{2}N\left\vert C\right\vert \right\vert ^{2}~dtd\lambda\geqslant cN^{2/5}\ .
\]

\end{proposition}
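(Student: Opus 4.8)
The plan is to deduce this directly from Theorem \ref{Theorem B-a} with $\sigma = 1$, which yields exactly the exponent $2/(2\cdot 1+3) = 2/5$. So the entire task reduces to verifying that a convex body $C$ which is not a polygon and has piecewise $\mathcal{C}^2$ boundary satisfies the hypothesis \eqref{due} with $\sigma = 1$ and a suitable nonempty interval $I$. For the second line of \eqref{due} (the estimate for $\theta \notin I$ with exponent $\sigma = 1$) I would invoke Proposition \ref{stimadeltadasotto}: it gives $|\gamma_\Theta(\delta)| \geq c\delta$ uniformly in $\Theta$ for $0 < \delta < \delta_0$, hence a fortiori $|\gamma_{-\Theta}(\delta)| + |\gamma_\Theta(\delta)| \geq c\delta = c\delta^\sigma$ for all $\theta$, which in particular covers all $\theta \notin I$. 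So the whole content is in producing an interval $I$ on which the stronger estimate $|\gamma_{-\Theta}(\delta)| + |\gamma_\Theta(\delta)| \geq c_1 \delta^{1/2}$ holds.

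To find such an $I$, I would use the hypothesis that $C$ is not a polygon together with the piecewise $\mathcal{C}^2$ assumption. Since $\partial C$ is piecewise $\mathcal{C}^2$ and $C$ is not a polygon, there must be an arc $\Gamma \subset \partial C$ which is $\mathcal{C}^2$ and is not a line segment; hence there is a point $p \in \Gamma$ at which the (one-sided, or genuine) curvature is strictly positive — otherwise every $\mathcal{C}^2$ piece would be a segment and, being convex with finitely many pieces, $C$ would be a polygon. Let $\Theta_0$ be the inward unit normal to $\partial C$ at $p$. Working in coordinates with $p$ at the origin and $\Theta_0 = (0,1)$, the body $\partial C$ coincides near $p$ with the graph of a $\mathcal{C}^2$ convex function $\varphi$ with $\varphi(0) = \varphi'(0) = 0$ and $\varphi''(0) > 0$; by continuity $\varphi''(x) \geq \kappa > 0$ on a neighborhood, so $\varphi(x) \leq \tfrac{1}{2}\|\varphi''\|_\infty x^2$ gives (as in Remark \ref{CordaC2}) the lower bound $|\gamma_{\Theta_0}(\delta)| \geq c\,\delta^{1/2}$ for small $\delta$. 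The same computation is stable under small perturbations of the direction: for $\Theta$ in a sufficiently small arc $I$ around $\theta_0$, the supporting line in direction $\Theta$ touches $\partial C$ at a point still lying on the $\mathcal{C}^2$ arc $\Gamma$ near $p$, where the curvature is bounded above and below, so $|\gamma_\Theta(\delta)| \geq c_1 \delta^{1/2}$ holds uniformly for $\theta \in I$, for $\delta$ below some $\delta_0$.

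Finally I would assemble the pieces: take this $I$, take $\sigma = 1$, take $\delta_0$ the minimum of the two thresholds above and $c_1, c_2$ the constants produced, and observe that \eqref{due} holds (using $|\gamma_{-\Theta}(\delta)| \geq 0$ to drop the unneeded term in the first line). Then Theorem \ref{Theorem B-a} applies verbatim and gives the bound $cN^{2/5}$. I expect the only genuinely delicate point to be the uniformity in $\Theta$ over the interval $I$ — that is, checking that the contact point of the supporting line stays on a single $\mathcal{C}^2$ arc with a two-sided curvature bound as $\Theta$ varies, rather than jumping to an adjacent (possibly flat, or merely Lipschitz-joined) piece of $\partial C$; this is handled by shrinking $I$, but it is the step that requires care rather than a one-line citation.
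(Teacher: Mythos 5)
Your proposal is correct and takes essentially the same route as the paper: both apply Theorem \ref{Theorem B-a} with $\sigma=1$, use Proposition \ref{stimadeltadasotto} for the generic $\delta$-estimate, and produce the interval $I$ by isolating a $\mathcal{C}^{2}$ arc of $\partial C$ with curvature bounded away from zero, then invoking the computation of Remark \ref{CordaC2}. The only difference is that you spell out the ``not a polygon $\Rightarrow$ some $\mathcal{C}^{2}$ piece is not a segment $\Rightarrow$ a subarc has curvature bounded below'' step and the stability of the contact point under perturbation of $\Theta$, both of which the paper asserts in one line.
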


Observe that Theorem \ref{Theorem B-b} shows that the above estimate $N^{2/5}$
cannot be improved.\medskip

Theorem \ref{C2} is a consequence also of the next theorem.

\begin{theorem}
\label{irredistr}Let $C\subset\mathbb{T}^{2}$ be a convex body. Assume there
exist $c,\delta_{0}>0$ and $1/2\leqslant\sigma<1$ such that for $0\leqslant
\delta\leqslant\delta_{0}$ and $\theta\in\left[  0,\pi\right)  $ we have%
\begin{equation}
\left\vert \gamma_{-\Theta}\left(  \delta\right)  \right\vert +\left\vert
\gamma_{\Theta}\left(  \delta\right)  \right\vert \geqslant c\delta^{\sigma}.
\label{DueCorde}%
\end{equation}
Then, there exists $c>0$ such that for every finite set $\mathcal{P}_{N}$ of
$N$ points in $\mathbb{T}^{2}$ we have%
\begin{equation}
\int_{1/2}^{1}\int_{\mathbb{T}^{2}}\left\vert \mathrm{card}\left(
\mathcal{P}_{N}\mathcal{\cap}\left(  \tau C+t\right)  \right)  -\tau
^{2}N\left\vert C\right\vert \right\vert ^{2}~dtd\tau\geqslant c\ N^{1-\sigma
}\ . \label{nunquarto}%
\end{equation}

\end{theorem}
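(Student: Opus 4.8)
The plan is to use the Fourier–analytic method of Montgomery. Write $\mu_{N}=\sum_{j=1}^{N}\delta_{u_{j}}$ for the empirical measure and $\widehat{\mu_{N}}(m)=\sum_{j=1}^{N}e^{-2\pi i m\cdot u_{j}}$, $m\in\mathbb{Z}^{2}$, and set $D(\tau,t)=\mathrm{card}\bigl(\mathcal{P}_{N}\cap(\tau C+t)\bigr)-\tau^{2}N|C|$. Since $\tau C\subset\mathbb{T}^{2}$ for $\tau\in[1/2,1]$, the $m$-th Fourier coefficient of $\chi_{\tau C}$ on $\mathbb{T}^{2}$ equals its Euclidean analogue $\tau^{2}\widehat{\chi_{C}}(\tau m)$. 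Expanding $\mathrm{card}(\mathcal{P}_{N}\cap(\tau C+t))=\sum_{j}\chi_{\tau C}(u_{j}-t)$ in a Fourier series in $t$, the $m=0$ term is exactly $\tau^{2}N|C|$, so Parseval in $t$ gives
\[
\int_{\mathbb{T}^{2}}|D(\tau,t)|^{2}\,dt=\sum_{m\neq0}\tau^{4}|\widehat{\chi_{C}}(\tau m)|^{2}\,|\widehat{\mu_{N}}(m)|^{2},
\]
and integrating in $\tau$,
\[
\int_{1/2}^{1}\!\int_{\mathbb{T}^{2}}|D(\tau,t)|^{2}\,dt\,d\tau=\sum_{m\neq0}|\widehat{\mu_{N}}(m)|^{2}\int_{1/2}^{1}\tau^{4}|\widehat{\chi_{C}}(\tau m)|^{2}\,d\tau .
\]

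The main point is a geometric lower bound for the averaged weight $\int_{1/2}^{1}\tau^{4}|\widehat{\chi_{C}}(\tau m)|^{2}\,d\tau$ in terms of the chords of $C$. Writing $m=\rho\Theta$ with $\rho=|m|$ and slicing $C$ along lines orthogonal to $\Theta$, one has $\widehat{\chi_{C}}(r\Theta)=\int_{\mathbb{R}}\ell_{\Theta}(s)e^{-2\pi irs}\,ds$, where $\ell_{\Theta}\geqslant0$ is the concave, compactly supported chord–length function of $C$ in direction $\Theta$, whose behaviour at the two endpoints of its support is governed exactly by $|\gamma_{\Theta}(\delta)|$ and $|\gamma_{-\Theta}(\delta)|$. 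After the substitution $r=\tau\rho$ the averaged weight is comparable to $\rho^{-1}\int_{\rho/2}^{\rho}|\widehat{\ell_{\Theta}}(r)|^{2}\,dr$; integrating by parts once (the boundary terms vanish because $\ell_{\Theta}$ vanishes at the endpoints) and applying the Cassels–Montgomery lemma to the resulting monotone pieces yields, uniformly in $\Theta$ and for $|m|\geqslant1/\delta_{0}$,
\[
\int_{1/2}^{1}\tau^{4}|\widehat{\chi_{C}}(\tau m)|^{2}\,d\tau\geqslant c\,|m|^{-2}\bigl(|\gamma_{-\Theta}(|m|^{-1})|+|\gamma_{\Theta}(|m|^{-1})|\bigr)^{2}.
\]
Hypothesis (\ref{DueCorde}) with $\delta=|m|^{-1}$ then gives $\int_{1/2}^{1}\tau^{4}|\widehat{\chi_{C}}(\tau m)|^{2}\,d\tau\geqslant c\,|m|^{-2-2\sigma}$ for all $|m|\geqslant1/\delta_{0}$. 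I expect this averaged–decay estimate to be the hard step: one must control the possible destructive interference between the contributions of the two endpoints of $\ell_{\Theta}$, which is precisely why the hypothesis (and the estimate) involves the \emph{sum} $|\gamma_{-\Theta}|+|\gamma_{\Theta}|$ rather than the individual chords.

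It remains to bound the point–set side. A standard Fejér–kernel argument does this: pick $\Phi\geqslant0$ supported in the unit disk with $\widehat{\Phi}\geqslant0$; then by Poisson summation
\[
\sum_{m\in\mathbb{Z}^{2}}\Phi(m/R)\,|\widehat{\mu_{N}}(m)|^{2}=R^{2}\sum_{j,k}\sum_{n\in\mathbb{Z}^{2}}\widehat{\Phi}\bigl(R(u_{j}-u_{k}+n)\bigr)\geqslant R^{2}\,\widehat{\Phi}(0)\,N,
\]
so $\sum_{|m|\leqslant R}|\widehat{\mu_{N}}(m)|^{2}\geqslant c_{1}R^{2}N$ for every $R>0$. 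Choosing $R=c_{2}N^{1/2}$ with $c_{2}$ large and subtracting the $m=0$ term ($=N^{2}$) together with the $O(N^{2})$ coming from the finitely many $m$ with $|m|<1/\delta_{0}$, we get $\sum_{1/\delta_{0}\leqslant|m|\leqslant R}|\widehat{\mu_{N}}(m)|^{2}\geqslant c_{3}N^{2}$. Combining this with the two displays above and using $|m|^{-2-2\sigma}\geqslant R^{-2-2\sigma}$ on $|m|\leqslant R$,
\[
\int_{1/2}^{1}\!\int_{\mathbb{T}^{2}}|D(\tau,t)|^{2}\,dt\,d\tau\geqslant c\!\!\sum_{1/\delta_{0}\leqslant|m|\leqslant R}\!\!|m|^{-2-2\sigma}\,|\widehat{\mu_{N}}(m)|^{2}\geqslant c\,R^{-2-2\sigma}N^{2}=c\,(c_{2}N^{1/2})^{-2-2\sigma}N^{2}=c\,N^{1-\sigma},
\]
which is (\ref{nunquarto}). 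For the finitely many $N$ with $c_{2}N^{1/2}<1/\delta_{0}$ the bound is trivial after shrinking $c$, since the left–hand side is strictly positive.
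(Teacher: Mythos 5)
Your overall skeleton matches the paper's: expand the discrepancy in a Fourier series, apply Parseval, lower-bound the averaged weight $\int_{1/2}^{1}\tau^{4}|\widehat{\chi}_{C}(\tau m)|^{2}\,d\tau$ in terms of the chord lengths, and combine with a Cassels--Montgomery/Fej\'er-kernel bound $\sum_{|m|\leqslant R}|\widehat{\mu}_{N}(m)|^{2}\gtrsim R^{2}N$. Your Poisson-summation version of the point-set estimate is equivalent in substance to the paper's Lemma~\ref{lemmaCassels}, and the final choice $R\sim\sqrt N$ and the bookkeeping are correct. So far so good.

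The gap is precisely in the step you flag as ``the hard step.'' You assert, with no real argument, that
\[
\int_{1/2}^{1}\tau^{4}\bigl|\widehat{\chi}_{C}(\tau m)\bigr|^{2}\,d\tau
\;\geqslant\; c\,|m|^{-2}\bigl(|\gamma_{-\Theta}(|m|^{-1})|+|\gamma_{\Theta}(|m|^{-1})|\bigr)^{2},
\]
and propose to obtain it by ``integrating by parts once \dots and applying the Cassels--Montgomery lemma to the resulting monotone pieces.'' This does not work: the Cassels--Montgomery lemma is a statement about exponential sums $\sum_{j}e^{2\pi im\cdot u_{j}}$ over a convex symmetric region of lattice points; it has no bearing on lower bounds for $\int_{\rho/2}^{\rho}|\widehat{\ell_{\Theta}'}(r)|^{2}\,dr$ where $\ell_{\Theta}'$ is a monotone function. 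Integration by parts and monotonicity of $\ell_{\Theta}'$ readily give the \emph{upper} bound $|\widehat{\ell}_{\Theta}(r)|\lesssim r^{-1}\mu(r^{-1})$ (this is Podkorytov's estimate (\ref{Podkorytov})), but the matching \emph{lower} bound on the $L^{2}$-average is genuinely delicate and is the main technical content of the paper's Section~\ref{Sezione Geometric}. There it is established as Theorem~\ref{StimaDaSotto}, derived from Lemma~\ref{Lemma Media}, whose proof goes through second-order differences and moduli of smoothness: one shows $\int|\Delta_{h}^{2}f|^{2}\approx h\,\mu_{f}(h)^{2}$ for concave $f$ (Proposition~\ref{StimaBilatera}), converts the modulus of smoothness into two-sided control of tail integrals of $|\widehat f|^{2}$ (Lemma~\ref{Stime trasformata}), and then traps the dyadic annulus contribution from below by choosing $\alpha$ small and $\beta$ large so that the low- and high-frequency pieces can be absorbed. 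Nothing in your sketch replaces this. Unless you supply an independent proof of the averaged lower bound on $|\widehat{\chi}_{C}|$, the argument is incomplete at its central point.
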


\begin{remark}
We show that there exist planar convex bodies that satisfy the assumptions of
Theorem \ref{irredistr}, but do not satsfy the assumptions of Theorem
\ref{Theorem B-a}. Let $0<\alpha<1$, let $\left\{  q_{r}\right\}
_{r=1}^{+\infty}=\mathbb{Q}\cap\left[  0,1\right]  $, and let%
\[
f\left(  x\right)  =\sum_{r=1}^{+\infty}\frac{1}{r^{2}}\left(  x-q_{r}\right)
_{+}^{\alpha+1},
\]
with%
\[
x_{+}^{\alpha+1}=\left\{
\begin{array}
[c]{ll}%
x^{\alpha+1} & x\geqslant0,\\
0 & x<0.
\end{array}
\right.
\]
Since $x_{+}^{\alpha+1}\in C^{1,\alpha}$ (the space of functions with
H\"{o}lder continuous derivative of order $\alpha$) we immediately have $f\in
C^{1,\alpha}\left(  \left[  0,1\right]  \right)  $. Let us show that
$f\not \in C^{1,\beta}\left(  \left[  0,1\right]  \right)  $ for any
$\beta>\alpha$. Indeed, since $x_{+}^{\alpha}$ is increasing, for every fixed
$r_{0}\in\mathbb{Q}\cap\left[  0,1\right]  $ we have%
\begin{align*}
\sup_{h>0}\frac{f^{\prime}\left(  q_{r_{0}}+h\right)  -f^{\prime}\left(
q_{r_{0}}\right)  }{h^{\beta}} &  =\sup_{h>0}%
{\displaystyle\sum\limits_{r=1}^{+\infty}}
\frac{1}{r^{2}}\left(  \alpha+1\right)  \frac{\left(  q_{r_{0}}+h-q_{r}%
\right)  _{+}^{\alpha}-\left(  q_{r_{0}}-q_{r}\right)  _{+}^{\alpha}}%
{h^{\beta}}\\
&  \geqslant\sup_{h>0}\frac{1}{r_{0}^{2}}\left(  \alpha+1\right)  \frac
{h_{+}^{\alpha}}{h^{\beta}}=+\infty.
\end{align*}
Since $f$ is convex (note that $x_{+}^{\alpha+1}$ is convex) we can construct
a convex body $C$ such that $\partial C$ is $C^{1,\alpha}$ but not
$C^{1,\beta}$ for any $\beta>\alpha$. Theorem \ref{Equivalenza} below yields
our claim.
\end{remark}

If $\partial C$ is $\mathcal{C}^{2}$, then Remark \ref{CordaC2} yields
$\left\vert \gamma_{\Theta}\left(  \delta\right)  \right\vert \geqslant
c\delta^{1/2}$. One may be tempted to say that the inequality $\left\vert
\gamma_{\Theta}\left(  \delta\right)  \right\vert \geqslant c\delta^{1/2}$
implies a suitable regularity on $\partial C$. The following example shows
that this is not always true. Consider a planar convex body $C$ with
$\mathcal{C}^{1}$ boundary $\partial C\ni\left(  0,0\right)  $, where the
inward unit normal is $\Theta_{0}=\left(  0,1\right)  $. Also assume that
$\partial C$ coincides locally with the graph of the function%
\[
\varphi\left(  x\right)  =\left\{
\begin{array}
[c]{ll}%
\left\vert x\right\vert ^{3/2} & \text{if }-\varepsilon<x\leqslant0,\\
x^{2} & \text{if }0<x<\varepsilon.
\end{array}
\right.
\]
Then $\left\vert \gamma_{\Theta_{0}}\left(  \delta\right)  \right\vert
\approx\delta^{1/2}$, but $\partial C$ is not $\mathcal{C}^{2}$ at the origin.
Indeed, $\left\vert \gamma_{\Theta_{0}}\left(  \delta\right)  \right\vert $ is
the sum of two contributions of different order, one coming from $\left\vert
x\right\vert ^{3/2}$ for $x<0$ and the other coming from $x^{2}$ for $x>0$. To
obtain information on the regularity of $\partial C$ one has to consider these
two contributions separately. This is the motivation of the following definition.

\begin{definition}
\label{Def Corda}Let $C$ be a convex planar body where $\partial C$ is
$\mathcal{C}^{1}$. For every unit vector $\Theta$ the chord $\gamma_{\Theta
}\left(  \delta\right)  $ is parallel to the tangent line at a point
$P\in\partial C$ such that
\[
P\cdot\Theta=\inf_{y\in C}\left(  y\cdot\Theta\right)  \ .
\]
Denoting by $\mathbf{n}\left(  P\right)  $ the inward unit normal to $\partial
C$ at $P$ we have $\Theta=\mathbf{n}\left(  P\right)  $ and
\[
\gamma_{\Theta}\left(  \delta\right)  =\left\{  x\in C:x\cdot\mathbf{n}\left(
P\right)  =\inf_{y\in C}\left(  y\cdot\Theta\right)  +\delta\right\}  .
\]
The normal $\mathbf{n}\left(  P\right)  $ splits the chord into two parts
$\gamma_{\mathbf{n}\left(  P\right)  }^{+}\left(  \delta\right)  $ and
$\gamma_{\mathbf{n}\left(  P\right)  }^{-}\left(  \delta\right)  $ of lengths
$\left\vert \gamma_{\mathbf{n}\left(  P\right)  }^{+}\left(  \delta\right)
\right\vert $ and $\left\vert \gamma_{\mathbf{n}\left(  P\right)  }^{-}\left(
\delta\right)  \right\vert $ respectively (see Figure \ref{chord88}).
\end{definition}

\begin{figure}[h]
\begin{center}
\includegraphics[width=55mm]{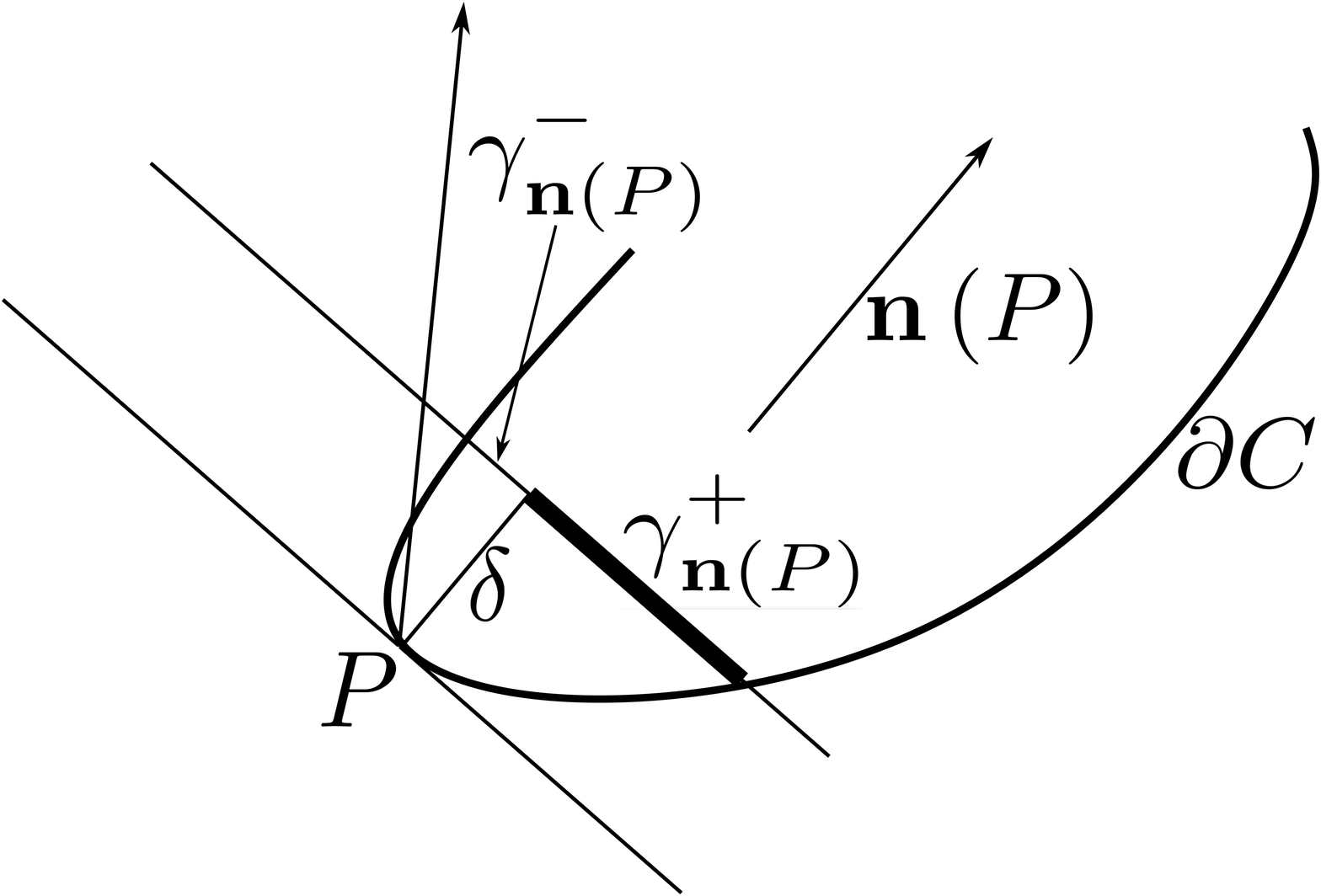}
\end{center}
\caption{Replacing $\left\vert \gamma_{\mathbf{n}\left(  P\right)  }\left(
\delta\right)  \right\vert $ with $\min\left\{  \left\vert \gamma
_{\mathbf{n}\left(  P\right)  }^{-}\left(  \delta\right)  \right\vert
,\left\vert \gamma_{\mathbf{n}\left(  P\right)  }^{+}\left(  \delta\right)
\right\vert \right\}  $.}%
\label{chord88}%
\end{figure}

\begin{remark}
\label{remarkdfdf} Let $C$ be a convex planar body where $\partial C$ is
$\mathcal{C}^{1}$. Then the existence of a positive constant $c$ such that,
for every $\theta$,
\[
\min\left\{  \left\vert \gamma_{\Theta}^{-}\left(  \delta\right)  \right\vert
,\left\vert \gamma_{\Theta}^{+}\left(  \delta\right)  \right\vert \right\}
\geqslant c\delta^{1/2}%
\]
is readily seen to be equivalent the existence of a positive number $R>0$ such
that for every $P\in\partial C$ there exists a disk of radius $R$ contained in
$C$ and tangent to $\partial C$ at $P$. This is a uniform version of the inner
disk condition that is used in the study of maximum principles for partial
differential equations (see e.g. \cite[Chapter 6]{Evans}).
\end{remark}

\begin{theorem}
\label{Equivalenza}Let $C\subset\mathbb{T}^{2}$ be a convex body and let
$0<\alpha\leqslant1$. Then the following are equivalent.

\begin{enumerate}
\item[a)] There exist constants $c>0$ and $\delta_{0}>0$ such that, for every
direction $\Theta$ and for $0<\delta<\delta_{0}$%
\begin{equation}
\min\left\{  \left\vert \gamma_{\Theta}^{-}\left(  \delta\right)  \right\vert
,\left\vert \gamma_{\Theta}^{+}\left(  \delta\right)  \right\vert \right\}
\geqslant c\delta^{1/\left(  1+\alpha\right)  }. \label{condizione corda}%
\end{equation}

\item[b)] The arc length parameterization $\Gamma\left(  s\right)  $ of
$\partial C$ is $C^{1,\alpha}$, that is $\ \Gamma\left(  s\right)
\in\mathcal{C}^{1}$ and there exists $M>0$ such that for every $s_{1},s_{2}$%
\[
\left\vert \Gamma^{\prime}\left(  s_{1}\right)  -\Gamma^{\prime}\left(
s_{2}\right)  \right\vert \leqslant M\left\vert s_{1}-s_{2}\right\vert
^{\alpha}.
\]

\end{enumerate}
\end{theorem}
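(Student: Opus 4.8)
The plan is to prove the equivalence between the chord condition (a) and the $C^{1,\alpha}$ regularity of the arc length parameterization (b) by passing to local graph coordinates and reducing everything to a statement about a single convex function $\varphi$ of one variable. Fix a point $P\in\partial C$ and choose coordinates so that $P$ is the origin and the inward unit normal $\mathbf n(P)=\Theta$ is $(0,1)$; near the origin $\partial C$ is the graph of a convex function $\varphi$ with $\varphi(0)=\varphi'(0)=0$. In these coordinates the chord $\gamma_\Theta(\delta)$ is the horizontal segment at height $\delta$, so $\lvert\gamma_\Theta^+(\delta)\rvert$ is the unique $x_+>0$ with $\varphi(x_+)=\delta$ and $\lvert\gamma_\Theta^-(\delta)\rvert$ is the unique $x_->0$ with $\varphi(-x_-)=\delta$. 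Thus, writing $\psi_+(x)=\varphi(x)$ for $x\ge 0$ and $\psi_-(x)=\varphi(-x)$ for $x\ge 0$, condition (a) says exactly that $\psi_\pm^{-1}(\delta)\ge c\,\delta^{1/(1+\alpha)}$, i.e. $\psi_\pm(x)\le C\,x^{1+\alpha}$, uniformly in $P$. So the first step is to show (a) is equivalent to the uniform one-sided estimate $\varphi(x)\le C\lvert x\rvert^{1+\alpha}$ (with the same $C$ for all $P$), which is immediate upon inverting the monotone functions $\psi_\pm$.

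The second step is the genuinely analytic heart: show that the family of bounds $\varphi_P(x)\le C\lvert x\rvert^{1+\alpha}$ (one convex $\varphi_P$ per boundary point, in the normal-at-$P$ coordinates) is equivalent to $\Gamma'\in C^{0,\alpha}$. For the direction (b) $\Rightarrow$ (a): if $\Gamma$ is $C^{1,\alpha}$, then in the graph coordinates at $P$ the tangent direction at the origin is horizontal, and the Hölder bound on $\Gamma'$ translates (since the graph map $x\mapsto(x,\varphi(x))$ is bi-Lipschitz onto its image with constants depending only on a local Lipschitz bound for $\varphi$, which is uniform by compactness) into $\lvert\varphi'(x)-\varphi'(0)\rvert=\lvert\varphi'(x)\rvert\le M'\lvert x\rvert^\alpha$; integrating from $0$ gives $\varphi(x)\le \frac{M'}{1+\alpha}\lvert x\rvert^{1+\alpha}$, which is (a). For the direction (a) $\Rightarrow$ (b): here one uses convexity crucially. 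Fix two nearby points $P_1,P_2\in\partial C$ at arc length distance $h=\lvert s_1-s_2\rvert$; we must bound the angle between the normals (equivalently tangents) $\mathbf n(P_1),\mathbf n(P_2)$ by $Mh^\alpha$. Work in the coordinates centered at $P_1$ with $\mathbf n(P_1)=(0,1)$, so $\partial C=\{(x,\varphi(x))\}$ with $\varphi$ convex, $\varphi(0)=\varphi'(0)=0$, and $0\le\varphi(x)\le C\lvert x\rvert^{1+\alpha}$. The point $P_2$ has horizontal coordinate $x_2$ with $\lvert x_2\rvert\le h$ (arc length dominates horizontal displacement), and the angle of $\mathbf n(P_2)$ relative to $(0,1)$ is $\arctan\varphi'(x_2)$, so it suffices to show $\lvert\varphi'(x_2)\rvert\le C'\lvert x_2\rvert^\alpha$. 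By convexity, for $x>0$, $\varphi'(x)\le \frac{\varphi(2x)-\varphi(x)}{x}\le\frac{\varphi(2x)}{x}\le\frac{C(2x)^{1+\alpha}}{x}=2^{1+\alpha}C\,x^\alpha$; similarly $\varphi'(x)\ge 0$ since $\varphi'(0)=0$ and $\varphi$ is convex. The symmetric argument on the left gives $\lvert\varphi'(x)\rvert\le 2^{1+\alpha}C\lvert x\rvert^\alpha$ for all small $x$, which is the needed pointwise derivative Hölder bound at the base point; since the base point $P_1$ was arbitrary and the constant $C$ is uniform, this yields the global estimate $\lvert\Gamma'(s_1)-\Gamma'(s_2)\rvert\le M\lvert s_1-s_2\rvert^\alpha$ after absorbing the bi-Lipschitz distortion between arc length and the graph parameter.

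The main obstacle I anticipate is the bookkeeping of uniformity: condition (a) is stated uniformly over all directions $\Theta$ (equivalently all points $P$) with a single constant $c$, and condition (b) is a single global Hölder estimate, so one must be careful that every local change of coordinates (to the normal frame at $P$, and between arc length and the graph parameter $x$) introduces only distortions bounded by constants depending on $C$ alone — not on $P$ — and that these constants can be made uniform by a compactness argument using $\partial C\in\mathcal C^1$. A secondary point requiring care is handling arc length distances that are not small (there the statement is trivial, since $\lvert\Gamma'\rvert\le 1$ bounds the left side by $2$ and one enlarges $M$), and reconciling the two one-sided pieces $\gamma^\pm$ at each point, which is exactly why Definition \ref{Def Corda} splits the chord: the quantity $\lvert\gamma_\Theta^-\rvert+\lvert\gamma_\Theta^+\rvert$ alone (as in \eqref{DueCorde}) would not control the regularity, but the minimum in \eqref{condizione corda} does, because it forces \emph{both} one-sided graphs $\psi_\pm$ to satisfy the power bound. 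Once the reduction to the scalar convex function is in place, each implication is a short convexity computation as sketched above.
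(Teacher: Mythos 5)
Your proposal is correct and reaches the same conclusion, but it follows a somewhat different path than the paper's proof, so the comparison is worth spelling out. The paper works directly with the arc length parameterization $\Gamma(s)$: for (a) $\Rightarrow$ (b) it fixes $s_1,s_2$, applies condition (a) with $\Theta=\mathbf{n}(s_1)$ \emph{and} with $\Theta=\mathbf{n}(s_2)$, and then picks a coordinate frame aligned with the chord $\Gamma(s_2)-\Gamma(s_1)$, reading off the bound on $|\mathbf{n}(s_1)-\mathbf{n}(s_2)|$ from the two resulting scalar inequalities. Your argument instead passes to the normal graph frame at a single base point $P_1$, reformulates (a) as $\varphi_P(x)\le C|x|^{1+\alpha}$, and extracts the derivative bound from the convexity inequality $\varphi'(x)\le\frac{\varphi(2x)-\varphi(x)}{x}$. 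The paper's route avoids any bi-Lipschitz transfer between graph parameter and arc length (it never leaves the $s$-variable), while your route uses the chord condition at only one endpoint of the pair, which is a small economy; both are short convexity arguments of comparable difficulty. For (b) $\Rightarrow$ (a) the paper likewise stays with $\Gamma$ and integrates $[\Gamma'(\tau)-\Gamma'(s_0)]\cdot\mathbf{n}(s_0)$ over an arc, while you integrate $\varphi'$ in the graph frame; these are essentially the same computation.

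One point you leave implicit that the paper treats explicitly: before writing $\varphi'(0)=0$ and using $\varphi'$ in the convexity estimate, one must first argue that condition (a) forces $\partial C$ to be $\mathcal{C}^1$ at all, i.e.\ that the left and right tangent directions agree at every boundary point. The paper does this in one line: at a corner, both one-sided chord lengths would be $O(\delta)$, contradicting the lower bound $c\delta^{1/(1+\alpha)}$ (since $1/(1+\alpha)\le 1$ with equality excluded for the relevant range, and indeed $\delta^{1/(1+\alpha)}\gg\delta$). Your sketch tacitly assumes differentiability when it introduces $\varphi'(0)=0$ and invokes ``compactness using $\partial C\in\mathcal{C}^1$''; you should make the corner-exclusion step explicit so the argument is not circular. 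With that preliminary added, your proof is complete and valid, and the uniformity bookkeeping you flag is handled correctly.
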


\section{Geometric estimates of Fourier transforms\label{Sezione Geometric}}

Let $C$ be a planar convex body. The Fourier transform of its characteristic
function is defined by%
\[
\widehat{\chi}_{C}\left(  \xi\right)  =\int_{\mathbb{R}^{2}}\chi_{C}\left(
t\right)  e^{-2\pi i\xi\cdot t}dt.
\]
For a given direction $\Theta$ we are interested in the decay of
$\widehat{\chi}_{C}\left(  \rho\Theta\right)  $ as $\rho\rightarrow+\infty$.
Without loss of generality we can assume $\Theta=\left(  1,0\right)  $ so that%
\[
\widehat{\chi}_{C}\left(  \rho,0\right)  =\int_{\mathbb{-\infty}}^{+\infty
}\int_{\mathbb{-\infty}}^{+\infty}\chi_{C}\left(  t_{1},t_{2}\right)  e^{-2\pi
i\rho t_{1}}dt_{1}dt_{2}=\int_{\mathbb{-\infty}}^{+\infty}g\left(
t_{1}\right)  e^{-2\pi i\rho t_{1}}dt_{1}%
\]
where%
\[
g\left(  t_{1}\right)  =\int_{\mathbb{-\infty}}^{+\infty}\chi_{C}\left(
t_{1},t_{2}\right)  dt_{2}.
\]
Since $C$ is convex then $g\left(  t_{1}\right)  $ is supported and concave on
a suitable interval $\left[  A,B\right]  $. A change of variables allows us to
replace $g$ with a function $f$ which is positive, supported and concave on
the interval $\left[  -1,1\right]  $. The Fourier transform of $f$ is defined
as follows%
\[
\widehat{f}\left(  s\right)  =\int_{-\infty}^{+\infty}f\left(  x\right)
e^{-2\pi isx}dx.
\]

\begin{definition}
\label{Def delta}Let $f:\mathbb{R\rightarrow R}$ supported, nonnegative and
concave in the interval $\left[  -1,1\right]  $. For every $h\in\left(
-\frac{1}{2},\frac{1}{2}\right)  $ define%
\[
\mu_{f}\left(  h\right)  =\max\left\{  f\left(  -1+\left\vert h\right\vert
\right)  ,f\left(  1-\left\vert h\right\vert \right)  \right\}  .
\]

\end{definition}

The following upper bound of $\widehat{f}$ is due to A. Podkorytov \cite{P1}
(see also \cite{BRT}):%
\begin{equation}
\left\vert \widehat{f}\left(  s\right)  \right\vert \leqslant\left\vert
s\right\vert ^{-1}\,\mu_{f}\left(  \left\vert s\right\vert ^{-1}\right)  .
\label{Podkorytov}%
\end{equation}
This readily implies that%
\begin{equation}
\left\vert \widehat{\chi}_{C}\left(  \rho\Theta\right)  \right\vert
\leqslant\frac{c}{\rho}\left(  \left\vert \gamma_{\Theta}\left(  \rho
^{-1}\right)  \right\vert +\left\vert \gamma_{-\Theta}\left(  \rho
^{-1}\right)  \right\vert \right)  . \label{Stima chi hat corde}%
\end{equation}
In this section we estimate (an average of) $\widehat{f}\left(  s\right)  $
from below. Our approach is based on second order differences.

\subsection{Second order differences and moduli of smoothness}

Following \cite[Chapter 2]{DeV-L} we define the modulus of smoothness of a
function $\phi$ as follows.

\begin{definition}
Let $\phi\in L^{2}\left(  \mathbb{R}\right)  $ and, for every $h\in\mathbb{R}%
$, let $\Delta_{h}$ be the difference operator%
\[
\Delta_{h}\phi\left(  x\right)  =\phi\left(  x+h\right)  -\phi\left(
x\right)
\]
and let $\Delta_{h}^{2}$ be the second order difference operator
\[
\Delta_{h}^{2}\phi\left(  x\right)  =\Delta_{h}\Delta_{h}\phi\left(  x\right)
=\phi\left(  x+2h\right)  -2\phi\left(  x+h\right)  +\phi\left(  x\right)  .
\]

\end{definition}

\begin{definition}
Let $\phi\in L^{2}\left(  \mathbb{R}\right)  $ and let $\nu\geqslant0$. The
second $L^{2}$-modulus of smoothness of $\phi$ is given by%
\[
\omega_{2}\left(  \phi,\nu\right)  =\sup_{\left\vert h\right\vert \leqslant
\nu}\left\{  \int_{-\infty}^{+\infty}\left[  \Delta_{h}^{2}\phi\left(
x\right)  \right]  ^{2}dx\right\}  ^{1/2}.
\]

\end{definition}

Let $f$ be supported, nonnegative and concave in the interval $\left[
-1,1\right]  $. A relation between $\Delta_{h}^{2}f\left(  x\right)  $ and
$\mu_{f}\left(  h\right)  $ is proved in the following proposition.

\begin{proposition}
\label{StimaBilatera}Let $f:\mathbb{R\rightarrow R}$ supported, nonnegative
and concave in the interval $\left[  -1,1\right]  $. Then there exist
constants $c_{1},c_{2}>0$, independent of $f$, such that for every
$h\in\left(  -\frac{1}{2},\frac{1}{2}\right)  $,%
\begin{equation}
c_{1}\left\vert h\right\vert ^{1/2}\mu_{f}\left(  h\right)  \leqslant\left\{
\int_{-\infty}^{+\infty}\left\vert \Delta_{h}^{2}f\left(  x\right)
\right\vert ^{2}dx\right\}  ^{1/2}\leqslant c_{2}\left\vert h\right\vert
^{1/2}\mu_{f}\left(  h\right)  . \label{DoppiaStima}%
\end{equation}
Moreover, for $0<\nu<\frac{1}{2}$%
\begin{equation}
c_{1}\nu^{1/2}\mu_{f}\left(  \nu\right)  \leqslant\omega_{2}\left(
f,\nu\right)  \leqslant c_{2}\nu^{1/2}\mu_{f}\left(  \nu\right)  .
\label{DoppiaStimaOmega}%
\end{equation}

\end{proposition}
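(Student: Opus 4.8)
The plan is to establish the two-sided bound (\ref{DoppiaStima}) by direct estimation, and then to deduce (\ref{DoppiaStimaOmega}) as an easy consequence. First I would reduce to understanding $\Delta_h^2 f(x)$ for $h>0$ small, using symmetry to handle $h<0$. The key structural fact is that $f$ is concave and nonnegative on $[-1,1]$ and vanishes outside. Concavity gives $\Delta_h^2 f(x)\le 0$ for all $x$ such that $[x,x+2h]\subset[-1,1]$, so on that range $\Delta_h^2 f = -|\Delta_h^2 f|$ and we can compute with the unsigned quantity. The only places where $\Delta_h^2 f(x)$ can be positive are near the two endpoints $\pm 1$, where the support boundary interacts with the shifts; there the relevant contributions have size comparable to $f(-1+h)$ and $f(1-h)$, i.e.\ to $\mu_f(h)$.

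For the upper bound in (\ref{DoppiaStima}) I would split the integral $\int |\Delta_h^2 f|^2$ into three regions: a left endpoint region of length $O(h)$ around $x=-1$, a right endpoint region of length $O(h)$ around $x=1$, and the interior. In each endpoint region $|\Delta_h^2 f(x)|$ is bounded by a constant times $\mu_f(h)$ (using monotonicity of $f$ near each endpoint, which follows from concavity together with $f\ge0$), contributing at most $c\, h\, \mu_f(h)^2$ after squaring and integrating. For the interior region, write $\Delta_h^2 f(x) = \int_0^h\int_0^h f''(x+s+t)\,ds\,dt$ in the distributional sense; since $f$ is concave, $-f''$ is a nonnegative measure, and one gets $\int_{\text{int}} |\Delta_h^2 f(x)|\,dx \le h^2 \cdot (\text{total mass of }-f'' \text{ restricted appropriately})$. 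The total variation of $f'$ on an interior interval bounded away from the endpoints is controlled by $\sup f/\operatorname{dist}$ to the endpoint; but more efficiently one uses $\int |\Delta_h^2 f(x)|\,dx \le 4h\,\operatorname{Var}(f')$-type estimates together with $\|\Delta_h^2 f\|_\infty \le c\,\mu_f(h)$ valid everywhere, so that $\int |\Delta_h^2 f|^2 \le \|\Delta_h^2 f\|_\infty \int |\Delta_h^2 f| \le c\,\mu_f(h)\cdot h\,\mu_f(h) = c\, h\, \mu_f(h)^2$. Taking square roots gives the right inequality in (\ref{DoppiaStima}). Here the main point to get right is the uniform bound $\|\Delta_h^2 f\|_\infty\le c\,\mu_f(h)$ and the bound $\int|\Delta_h^2 f|\le c\,h\,\mu_f(h)$, both of which follow from concavity plus the endpoint behavior.

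For the lower bound, the strategy is to exhibit a single interval of length comparable to $h$ on which $|\Delta_h^2 f(x)|$ is bounded below by a constant times $\mu_f(h)$. Suppose without loss of generality that $\mu_f(h)=f(1-h)$ (the other case is symmetric). Consider $x$ in $[1-h,1]$: then $x+2h>1$ so $f(x+2h)=0$, while $f(x+h)$ and $f(x)$ are both nonnegative; concavity and $f\ge 0$ force $f$ to be monotone decreasing on $[1-2h,1]$, and one checks that for $x$ in, say, $[1-h,1-h/2]$ one has $f(x)\ge f(1-h/2)$ and $f(x+h)\le f(1)=0$ is false in general — more carefully, $f(x+h)$ is small compared to $f(x)$ because the graph must come down to $0$ at $x=1$. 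A clean way: for $x$ near $1-2h$ we have $f(x)\ge$ (a controlled fraction of) $f(1-h)$ by concavity, $f(x+h)$ is between $0$ and $f(1-h)$, and $f(x+2h)=0$, and the convexity inequality $2f(x+h)\le f(x)+f(x+2h)=f(x)$ gives $\Delta_h^2 f(x)=f(x+2h)-2f(x+h)+f(x)\ge f(x)-2\cdot\tfrac12 f(x)=0$, which is not yet enough — so instead one picks $x$ so that $x+h$ is already outside $[-1,1-\text{const}\cdot h]$ in the sense that $f(x+h)\le \tfrac12 f(x)$, forced by the graph descending to $0$ at $1$; then $|\Delta_h^2 f(x)|\ge 2f(x+h)-f(x+2h)-\cdots$ or rather $|\Delta_h^2 f(x)| = 2f(x+h) - f(x) - f(x+2h)$ is not of a definite sign, so the robust estimate is: since $\Delta_h^2 f \le 0$ in the interior and $f(x+2h)=0$, $|\Delta_h^2 f(x)| = 2f(x+h)-f(x)$ when this is nonnegative, or use $\int_{1-2h}^{1-h}|\Delta_h^2 f| \ge |\int_{1-2h}^{1-h}\Delta_h^2 f|$ and compute the latter telescoping integral explicitly as a combination of integrals of $f$ over intervals of length $h$ near the endpoint, each comparable to $h\,\mu_f(h)$, with a net nonzero coefficient. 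This telescoping computation is the cleanest route: $\int \Delta_h^2 f = \int f(\cdot+2h) - 2\int f(\cdot+h)+\int f$ over a suitable interval produces boundary terms $\approx h\,\mu_f(h)$. Squaring and integrating over the length-$h$ interval then yields $\int|\Delta_h^2 f|^2 \ge c\, h\,\mu_f(h)^2$. \textbf{The main obstacle} I anticipate is precisely making this lower-bound argument robust: one must be careful that the contributions from the two endpoints do not cancel (they can't, since each is captured on a disjoint interval, and $\mu_f(h)$ is the \emph{max} of the two endpoint values, so at least one endpoint alone suffices), and one must handle the degenerate possibility that $f$ drops to $0$ well before reaching $\pm1$, which is where tracking $\mu_f(h)=\max\{f(-1+h),f(1-h)\}$ rather than $\sup f$ is essential.

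Finally, (\ref{DoppiaStimaOmega}) follows immediately: by definition $\omega_2(f,\nu)=\sup_{|h|\le\nu}\|\Delta_h^2 f\|_2$. The upper bound in (\ref{DoppiaStima}) gives $\|\Delta_h^2 f\|_2\le c_2|h|^{1/2}\mu_f(h)\le c_2\nu^{1/2}\mu_f(\nu)$ since $|h|^{1/2}\mu_f(h)$ is (up to constants) nondecreasing in $|h|$ — or more simply, bound $|h|^{1/2}\le\nu^{1/2}$ and $\mu_f(h)\le\mu_f(\nu)$ using that $f$ is decreasing near each endpoint so $\mu_f$ is nondecreasing on $(0,1/2)$; this gives the right inequality in (\ref{DoppiaStimaOmega}). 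For the left inequality, take $h=\nu$ (or $h\to\nu^-$) in the lower bound of (\ref{DoppiaStima}): $\omega_2(f,\nu)\ge\|\Delta_\nu^2 f\|_2\ge c_1\nu^{1/2}\mu_f(\nu)$. This completes the proof.
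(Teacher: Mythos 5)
Your upper bound strategy is a genuinely different route from the paper's. The paper splits $\int|\Delta_h^2 f|^2$ into five pieces (two boundary strips plus a three-fold split of the interior at the sign-change point $\alpha$ of $\Delta_h f$), and closes the interior estimate with the inequality $|x-y|^2\leqslant|x^2-y^2|$ for $xy\geqslant 0$. You instead invoke $\int|\Delta_h^2 f|^2\leqslant\|\Delta_h^2 f\|_\infty\int|\Delta_h^2 f|$, which, if one proves the two intermediate claims $\|\Delta_h^2 f\|_\infty\leqslant c\,\mu_f(h)$ and $\int|\Delta_h^2 f|\leqslant c\,h\,\mu_f(h)$, is arguably cleaner. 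Both claims are in fact true: writing $D(x)=f(x+h)-f(x)$, concavity makes $D$ nonincreasing on $[-1,1-h]$ with $|D|\leqslant\mu_f(h)$ there, so $|\Delta_h^2 f(x)|=|D(x)-D(x+h)|\leqslant 2\mu_f(h)$ on $[-1,1-2h]$, while on the two boundary strips $|\Delta_h^2 f|\leqslant c\,\mu_f(h)$ follows from (\ref{lambda2lambda1}) and (\ref{lambda1<lambda2-1}); the $L^1$ bound on the interior comes from the telescoping $\int_{-1}^{1-2h}(D(x)-D(x+h))\,dx=\int_{-1}^{-1+h}D-\int_{1-2h}^{1-h}D$. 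But note your intermediate justification via $\mathrm{Var}(f')$ is not available ($\mathrm{Var}(f')=+\infty$ already for the semicircle), so you would need to replace that step with the telescoping just described.

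The lower bound, however, has a genuine gap. You missed the simplest observation: for $x\in(1-h,1]$ one has $x+h>1$ and $x+2h>1$, so \emph{both} $f(x+h)$ and $f(x+2h)$ vanish and $\Delta_h^2 f(x)=f(x)$; likewise $\Delta_h^2 f(x)=f(x+2h)$ for $x\in[-1-2h,-1-h)$. Combined with $f(1-\tfrac{h}{2})\geqslant\tfrac12 f(1-h)$ from (\ref{lambda1<lambda2-1}), this gives $\int_{1-h}^1 f^2\geqslant c\,h\,f(1-h)^2$ and the symmetric bound at the left end, hence the lower bound in (\ref{DoppiaStima}) in two lines — which is exactly what the paper does. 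Instead, you worry (incorrectly) that $f(x+h)$ may not vanish, and fall back on $\int_{1-2h}^{1-h}|\Delta_h^2 f|\geqslant\bigl|\int_{1-2h}^{1-h}\Delta_h^2 f\bigr|$, claiming the telescoped integral has ``a net nonzero coefficient''. This fails: $\int_{1-2h}^{1-h}\Delta_h^2 f=\int_{1-2h}^{1-h}f-2\int_{1-h}^{1}f$, and the two integrals are of the same size with $f$-dependent ratios, so the difference can vanish exactly. For instance, take $f$ concave, equal to a constant $a$ on $[1-2h,1-h]$ and decreasing linearly to $0$ at $x=1$; then $\int_{1-2h}^{1-h}f=ah$ and $\int_{1-h}^1 f=ah/2$, so the signed integral is $0$, even though $\int_{1-2h}^{1-h}|\Delta_h^2 f|^2=a^2h/3$ is not. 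So the step from the $L^1$ lower bound to the conclusion does not go through, and the argument as written does not establish (\ref{DoppiaStima}) from below.

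Finally, in deducing (\ref{DoppiaStimaOmega}) you assert that $\mu_f$ is nondecreasing on $(0,1/2)$; that is false (take $f(x)=1-x$, for which $\mu_f(h)=2-h$). What is true, and what the paper uses, is the doubling bound $\mu_f(h)\leqslant 2\mu_f(\nu)$ for $0<h\leqslant\nu<\tfrac12$, coming from (\ref{lambda2lambda1}); this is all you need, so that part is a fixable slip rather than a gap.
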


The proof needs the following lemma.

\begin{lemma}
Let $f$ be supported, nonnegative and concave in $\left[  -1,1\right]  $.
Then, if $0\leqslant\lambda_{1}<\lambda_{2}\leqslant1$ or $-1\leqslant
\lambda_{2}<\lambda_{1}\leqslant0$,%
\begin{equation}
f\left(  \lambda_{2}\right)  \leqslant2f\left(  \lambda_{1}\right)  ,
\label{lambda2lambda1}%
\end{equation}
In particular for every $x\in\mathbb{R}$%
\begin{equation}
f\left(  x\right)  \leqslant2f\left(  0\right)  \text{.} \label{2f(0)}%
\end{equation}
Moreover, for $0\leqslant\lambda_{1}<\lambda_{2}<1$%
\begin{equation}
f\left(  \lambda_{1}\right)  \leqslant\frac{1-\lambda_{1}}{1-\lambda_{2}%
}f\left(  \lambda_{2}\right)  \label{lambda1<lambda2-1}%
\end{equation}
and for $-1<\lambda_{2}<\lambda_{1}\leqslant0$%
\begin{equation}
f\left(  \lambda_{1}\right)  \leqslant\frac{1+\lambda_{1}}{1+\lambda_{2}%
}f\left(  \lambda_{2}\right)  \label{lambda1<lambda2-2}%
\end{equation}

\end{lemma}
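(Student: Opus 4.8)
The plan is to prove the four inequalities by exploiting concavity of $f$ on $[-1,1]$, reducing everything to elementary estimates near the two endpoints $x=\pm 1$ where $f$ decays to $0$.

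First I would prove \eqref{lambda2lambda1}. Assume $0\leqslant\lambda_1<\lambda_2\leqslant 1$. Since $f$ is concave and vanishes at $1$ (as it is supported and concave on $[-1,1]$, hence continuous on $(-1,1)$ and forced to $0$ at the endpoints in the sense that $f(1-)=0$ unless $f$ is identically supported up to the endpoint — here one uses that $f\geqslant 0$ and concave forces $f$ to be monotone decreasing on any interval to the right of its maximum), the point $\lambda_1$ lies between the maximum point and $\lambda_2$, or... actually the cleanest route: by concavity on $[-1,1]$ the chord from $(\lambda_2,f(\lambda_2))$ to $(1,f(1))=(1,0)$ lies below the graph, so for $\lambda_1$ with $\lambda_2\leqslant\lambda_1$... no, I need $\lambda_1<\lambda_2$. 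The right statement: concavity gives $f(\lambda_1)\geqslant$ the value at $\lambda_1$ of the chord joining $(0,f(0))$ and $(\lambda_2,f(\lambda_2))$ when $\lambda_1\in[0,\lambda_2]$, hence $f(\lambda_1)\geqslant \frac{\lambda_2-\lambda_1}{\lambda_2}f(0)+\frac{\lambda_1}{\lambda_2}f(\lambda_2)\geqslant \frac{\lambda_2-\lambda_1}{\lambda_2}f(\lambda_2)+\frac{\lambda_1}{\lambda_2}f(\lambda_2)=f(\lambda_2)$, \emph{provided} $f(0)\geqslant f(\lambda_2)$. So I first note that on $[0,1]$, $f$ is nonincreasing: if it increased somewhere past $0$, concavity plus $f(1)=0$ would force negativity. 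This gives in fact $f(\lambda_1)\geqslant f(\lambda_2)$ directly, which is stronger than \eqref{lambda2lambda1}; the factor $2$ is only needed if one does not want to discuss monotonicity and prefers the symmetric endpoint $-1$. Either way \eqref{2f(0)} follows by taking $\lambda_1=0$ and combining the two sides $x\geqslant 0$ and $x\leqslant 0$.

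Next I would prove \eqref{lambda1<lambda2-1}. For $0\leqslant\lambda_1<\lambda_2<1$, apply concavity to the three ordered points $\lambda_1<\lambda_2<1$: the point $(\lambda_2,f(\lambda_2))$ lies above the chord joining $(\lambda_1,f(\lambda_1))$ and $(1,0)$, i.e.
\[
f(\lambda_2)\geqslant \frac{1-\lambda_2}{1-\lambda_1}\,f(\lambda_1),
\]
which rearranges to exactly \eqref{lambda1<lambda2-1}. The estimate \eqref{lambda1<lambda2-2} is the mirror image obtained by replacing $f(x)$ with $f(-x)$, which is again supported, nonnegative and concave on $[-1,1]$, and mapping $1\mapsto -1$.

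The main obstacle — really the only subtle point — is making precise the claim that a nonnegative concave function ``supported on $[-1,1]$'' satisfies $f(\pm 1)=0$ in the relevant one-sided sense and is monotone on each half. A nonnegative concave function on $[-1,1]$ need not be continuous at the endpoints, but it is continuous on the open interval and has one-sided limits at $\pm1$; since $f$ is supported in $[-1,1]$ these limits, together with the behaviour used in the chord arguments, behave as if $f(\pm1)=0$, and all the chord inequalities above only ever evaluate the chord at interior points or use the value $0$ at the endpoint as an upper bound for the true endpoint limit (which is nonnegative, so using $0$ only weakens the lower bounds — one must check the direction of each inequality). I would state this once at the start (``replacing $f$ by its restriction to $(-1,1)$ extended by its limits, we may assume $f$ is continuous on $[-1,1]$ with $f(\pm1)=0$'') and then the three displayed chord computations are immediate. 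I expect no genuine difficulty beyond this bookkeeping.
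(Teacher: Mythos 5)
Your proofs of \eqref{lambda1<lambda2-1} and \eqref{lambda1<lambda2-2} are correct and essentially the paper's argument: the chord from $(\lambda_1,f(\lambda_1))$ to $(1,f(1))$ lies below the graph at $\lambda_2$, and dropping the nonnegative $f(1)$-term gives the inequality. Your handling of the endpoint values (that using $0$ in place of $f(\pm1)\geqslant 0$ only weakens the lower bound) is also fine.

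However, your argument for \eqref{lambda2lambda1} has a genuine gap. You reduce the claim to ``$f(0)\geqslant f(\lambda_2)$'' and then assert that $f$ is nonincreasing on $[0,1]$ because ``if it increased somewhere past $0$, concavity plus $f(1)=0$ would force negativity.'' That is false: a nonnegative concave function supported on $[-1,1]$ can perfectly well increase on part of $[0,1]$. For instance $f(x)=\min(1+x,\,2(1-x))$ on $[-1,1]$ (extended by $0$ outside) is concave, nonnegative, vanishes at $\pm1$, yet is strictly increasing on $[0,\tfrac13]$, so $f(0)=1<f(\tfrac13)=\tfrac43$. Your claimed stronger inequality $f(\lambda_1)\geqslant f(\lambda_2)$ is therefore simply wrong; the factor $2$ in \eqref{lambda2lambda1} is not cosmetic. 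The correct route---which you allude to as ``the symmetric endpoint $-1$'' but do not carry out---is the paper's: apply concavity on $[-1,\lambda_2]$ so that the chord through $(-1,f(-1))$ and $(\lambda_2,f(\lambda_2))$ lies below $f$ at $\lambda_1$; since $f(-1)\geqslant 0$ this yields
\[
f(\lambda_1)\geqslant \frac{\lambda_1+1}{\lambda_2+1}\,f(\lambda_2)\geqslant \tfrac12 f(\lambda_2),
\]
using $\lambda_1\geqslant 0$ and $\lambda_2\leqslant 1$. With this fix, \eqref{2f(0)} follows as you say by taking $\lambda_1=0$ on each side.
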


\begin{proof}
We can clearly assume $0\leqslant\lambda_{1}<\lambda_{2}\leqslant1$. Since $f$
is concave in $\left[  -1,\lambda_{2}\right]  $ we have%
\[
f\left(  -1\right)  +\frac{f\left(  \lambda_{2}\right)  -f\left(  -1\right)
}{\lambda_{2}+1}\left(  \lambda_{1}+1\right)  \leqslant f\left(  \lambda
_{1}\right)  .
\]
This gives%
\[
f\left(  \lambda_{2}\right)  \left(  \lambda_{1}+1\right)  \leqslant\left(
\lambda_{2}+1\right)  f\left(  \lambda_{1}\right)
\]
and since $\frac{\lambda_{2}+1}{\lambda_{1}+1}\leqslant2$ we obtain $f\left(
\lambda_{2}\right)  \leqslant2f\left(  \lambda_{1}\right)  $. Similarly, since
$f$ is concave in $\left[  \lambda_{1},1\right]  $, we obtain%
\[
f\left(  1\right)  +\frac{f\left(  \lambda_{1}\right)  -f\left(  1\right)
}{\lambda_{1}-1}\left(  \lambda_{2}-1\right)  \leqslant f\left(  \lambda
_{2}\right)
\]
so that%
\[
f\left(  \lambda_{1}\right)  \frac{\lambda_{2}-1}{\lambda_{1}-1}\leqslant
f\left(  \lambda_{2}\right)  -f\left(  1\right)  \left(  \frac{\lambda
_{1}-\lambda_{2}}{\lambda_{1}-1}\right)  \leqslant f\left(  \lambda
_{2}\right)  .
\]
Then we obtain (\ref{lambda1<lambda2-1}).
\end{proof}

\begin{proof}
[Proof of Proposition \ref{StimaBilatera}]First of all observe that it is
enough to consider the case $h>0$. Indeed, the case $h<0$ follows applying
(\ref{DoppiaStima}) to the function $f\left(  -x\right)  $. Then we have%
\begin{align*}
\int_{-\infty}^{+\infty}\left\vert \Delta_{h}^{2}f\left(  x\right)
\right\vert ^{2}dx=  &  \int_{-1-2h}^{-1}\left\vert \Delta_{h}f\left(
x+h\right)  \right\vert ^{2}dx\\
&  +\int_{-1}^{1-2h}\left\vert \Delta_{h}f\left(  x+h\right)  -\Delta
_{h}f\left(  x\right)  \right\vert ^{2}dx+\int_{1-2h}^{1}\left\vert \Delta
_{h}f\left(  x\right)  \right\vert ^{2}dx\\
=  &  \mathcal{A}\left(  h\right)  +\mathcal{B}\left(  h\right)
+\mathcal{C}\left(  h\right)  \;.
\end{align*}
For the term $\mathcal{A}\left(  h\right)  $, using (\ref{lambda2lambda1}) we
obtain%
\begin{align*}
\mathcal{A}\left(  h\right)   &  =\int_{-1-2h}^{-1}\left\vert f\left(
x+2h\right)  -2f\left(  x+h\right)  \right\vert ^{2}dx\\
&  \leqslant2\int_{-1-2h}^{-1}\left[  f\left(  x+2h\right)  \right]
^{2}dx+8\int_{-1-2h}^{-1}\left[  f\left(  x+h\right)  \right]  ^{2}dx\\
&  =2\int_{-1}^{-1+2h}\left[  f\left(  x\right)  \right]  ^{2}dx+8\int
_{-1}^{-1+h}\left[  f\left(  x\right)  \right]  ^{2}dx.
\end{align*}

Observe that, by (\ref{lambda2lambda1}) and (\ref{lambda1<lambda2-2}), for
every if $x\in\left[  -1,-1+2h\right]  $ we have%
\[
f\left(  x\right)  \leqslant2f\left(  -1+2h\right)  \leqslant4f\left(
-1+h\right)  .
\]
Then
\[
\mathcal{A}\left(  h\right)  \leqslant c\,h\left[  f\left(  -1+h\right)
\right]  ^{2}.
\]
Similarly for $\mathcal{C}\left(  h\right)  $ we have%
\[
\mathcal{C}\left(  h\right)  \leqslant c\,h\left[  f\left(  1-h\right)
\right]  ^{2}.
\]
Now let us consider $\mathcal{B}\left(  h\right)  $. Since $f$ is concave in
the interval $\left[  -1,1\right]  $, for any given $h>0$ the function
$\Delta_{h}f\left(  x\right)  $ is decreasing in $\left[  -1,1-h\right]  $.
Let $\alpha\in\left[  -1,1-h\right]  $ satisfy $\Delta_{h}f\left(  x\right)
\geqslant0$ for $x\in\left[  -1,\alpha\right]  $ and $\Delta_{h}f\left(
x\right)  \leqslant0$ for $x\in\left[  \alpha,1-h\right]  $. Assume first that%
\begin{equation}
-1+h\leqslant\alpha\leqslant1-2h. \label{intervallo alpha}%
\end{equation}
Then%
\begin{align}
\mathcal{B}\left(  h\right)   &  \mathcal{=}\int_{-1}^{\alpha-h}\left\vert
\Delta_{h}f\left(  x+h\right)  -\Delta_{h}f\left(  x\right)  \right\vert
^{2}dx+\int_{\alpha-h}^{\alpha}\left\vert \Delta_{h}f\left(  x+h\right)
-\Delta_{h}f\left(  x\right)  \right\vert ^{2}dx\label{Split B}\\
&  +\int_{\alpha}^{1-2h}\left\vert \Delta_{h}f\left(  x+h\right)  -\Delta
_{h}f\left(  x\right)  \right\vert ^{2}dx\nonumber\\
&  =\mathcal{B}_{1}\left(  h\right)  \mathcal{+B}_{2}\left(  h\right)
\mathcal{+B}_{3}\left(  h\right)  .\nonumber
\end{align}
To estimate the term $\mathcal{B}_{1}\left(  h\right)  $ we use the inequality%
\[
\left\vert x-y\right\vert ^{2}\leqslant\left\vert x^{2}-y^{2}\right\vert
\]
that holds for $xy\geqslant0.$ Thus%
\begin{align*}
\mathcal{B}_{1}\left(  h\right)   &  \leqslant\int_{-1}^{\alpha-h}\left[
\left[  \Delta_{h}f\left(  x\right)  \right]  ^{2}-\left[  \Delta_{h}f\left(
x+h\right)  \right]  ^{2}\right]  dx\\
&  =\int_{-1}^{\alpha-h}\left[  \Delta_{h}f\left(  x\right)  \right]
^{2}dx-\int_{-1+h}^{\alpha}\left[  \Delta_{h}f\left(  x\right)  \right]
^{2}dx\\
&  =\int_{-1}^{-1+h}\left[  \Delta_{h}f\left(  x\right)  \right]  ^{2}%
dx-\int_{\alpha-h}^{\alpha}\left[  \Delta_{h}f\left(  x\right)  \right]
^{2}dx\\
&  \leqslant\int_{-1}^{-1+h}\left[  \Delta_{h}f\left(  x\right)  \right]
^{2}dx=\int_{-1}^{-1+h}\left[  f\left(  x+h\right)  -f\left(  x\right)
\right]  ^{2}dx.
\end{align*}
Using (\ref{lambda2lambda1}) and (\ref{lambda1<lambda2-2}) the latter can be
bounded by%
\begin{align*}
&  2\int_{-1}^{-1+h}\left[  f\left(  x+h\right)  \right]  ^{2}dx+2\int
_{-1}^{-1+h}\left[  f\left(  x\right)  \right]  ^{2}dx\\
&  \leqslant8\int_{-1}^{-1+h}\left[  f\left(  -1+2h\right)  \right]
^{2}dx+8\int_{-1}^{-1+h}\left[  f\left(  -1+h\right)  \right]  ^{2}dx\\
&  \leqslant8h\left(  \left[  f\left(  -1+2h\right)  \right]  ^{2}+\left[
f\left(  -1+h\right)  \right]  ^{2}\right)  \leqslant40h\,\left[  f\left(
-1+h\right)  \right]  ^{2}.
\end{align*}
A similar estimate holds for $\mathcal{B}_{3}\left(  h\right)  $. To estimate
$\mathcal{B}_{2}\left(  h\right)  $ observe $\left[  \Delta_{h}f\left(
x\right)  \right]  ^{2}$ is decreasing for $-1\leqslant x\leqslant\alpha$ and
increasing for $\alpha\leqslant x\leqslant1-h$. Then, recalling
(\ref{intervallo alpha}), we have%
\begin{align*}
\mathcal{B}_{2}\left(  h\right)   &  \leqslant2\int_{\alpha-h}^{\alpha}\left[
\Delta_{h}f\left(  x+h\right)  \right]  ^{2}dx+2\int_{\alpha-h}^{\alpha
}\left[  \Delta_{h}f\left(  x\right)  \right]  ^{2}dx\\
&  =2\int_{\alpha}^{\alpha+h}\left[  \Delta_{h}f\left(  x\right)  \right]
^{2}dx+2\int_{\alpha-h}^{\alpha}\left[  \Delta_{h}f\left(  x\right)  \right]
^{2}dx\\
&  \leqslant2\int_{1-h}^{1}[\Delta_{h}f(t+\alpha+h-1)]^{2}dt+2\int_{-1}%
^{-1+h}[\Delta_{h}f(t+\alpha-h+1)]^{2}dt.
\end{align*}
Observe that if $t\in\left[  1-h,1\right]  $ we have
\[
\alpha\leqslant t+\alpha+h-1<t,
\]
so that%
\[
\Delta_{h}f(t+\alpha+h-1)]^{2}\leqslant\left[  \Delta_{h}f\left(  t\right)
\right]  ^{2}.
\]
This gives%
\[
\int_{1-h}^{1}[\Delta_{h}f(t+\alpha+h-1)]^{2}dt\leqslant\int_{1-h}^{1}\left[
\Delta_{h}f\left(  t\right)  \right]  ^{2}dt\leqslant c\,h\left[  f\left(
1-h\right)  \right]  ^{2}.
\]
Similarly for $t\in\left[  -1,-1+h\right]  $ we have%
\[
t\leqslant t+\alpha-h+1\leqslant\alpha,
\]
so that%
\[
\int_{-1}^{-1+h}[\Delta_{h}f(t+\alpha-h+1)]^{2}dt\leqslant\int_{-1}%
^{-1+h}[\Delta_{h}f(t)]^{2}dt\leqslant c\,h\left[  f\left(  1+h\right)
\right]  ^{2}.
\]
Therefore%
\[
\mathcal{B}_{2}\left(  h\right)  \leqslant c\,h\left[  f\left(  1-h\right)
\right]  ^{2}+c\,h\left[  f\left(  1+h\right)  \right]  ^{2}.
\]

Finally observe that when $-1\leqslant\alpha<-1+h$ or $1-2h<\alpha
\leqslant1-h$ equation (\ref{Split B}) reduces to two terms that can be
handled as in the previous case. The second inequality in (\ref{DoppiaStima})
is a consequence of the previous computations.

To prove the first inequality in (\ref{DoppiaStima}) observe that%
\begin{align*}
\int_{-\infty}^{+\infty}\left[  \Delta_{h}^{2}f\left(  x\right)  \right]
^{2}dx  &  \geqslant\int_{1-h}^{1}\left[  \Delta_{h}^{2}f\left(  x\right)
\right]  ^{2}dx+\int_{-1-2h}^{-1-h}\left[  \Delta_{h}^{2}f\left(  x\right)
\right]  ^{2}dx\\
&  =\int_{1-h}^{1}\left[  f\left(  x\right)  \right]  ^{2}dx+\int
_{-1-2h}^{-1-h}\left[  f\left(  x+2h\right)  \right]  ^{2}dx\\
&  \geqslant c\,h\mu_{f}\left(  h\right)  ^{2}.
\end{align*}

This completes the proof of (\ref{DoppiaStima}) and proves also the first
inequality in (\ref{DoppiaStimaOmega}). To prove the second inequality in
(\ref{DoppiaStimaOmega}) let us fix $\nu$ and let $0\leqslant h\leqslant\nu$.
Then by (\ref{lambda2lambda1}) we have%
\[
\mu_{f}\left(  h\right)  \leqslant2\mu_{f}\left(  \nu\right)
\]
so that%
\[
\left\{  \int_{-\infty}^{+\infty}\left\vert \Delta_{h}^{2}f\left(  x\right)
\right\vert ^{2}dx\right\}  ^{1/2}\leqslant c_{2}\left\vert h\right\vert
^{1/2}\mu_{f}\left(  h\right)  \leqslant2c_{2}\nu^{1/2}\mu_{f}\left(
\nu\right)  .
\]
Therefore%
\[
\omega_{2}\left(  f,\nu\right)  \leqslant c\,\delta^{1/2}\mu_{f}\left(
\nu\right)  .
\]

\end{proof}

\subsection{Decay of Fourier transforms}

Moduli of smoothness turn out to be a link between Fourier transforms and the
chord estimates as introduced in Definition \ref{firstdef}. The following
result is known (see \cite{BCT}, \cite{Gioev}, \cite{Br-Pi}). We give a proof
for completeness.

\begin{lemma}
\label{Stime trasformata}There exists $c>0$ such that for every $\phi\in
L^{2}\left(  \mathbb{R}\right)  $ and $\rho\geqslant1$ we have%
\[
\left\{  \int_{\left\{  \rho\leqslant\left\vert s\right\vert \right\}
}\left\vert \widehat{\phi}\left(  s\right)  \right\vert ^{2}ds\right\}
^{1/2}\leqslant c\omega_{2}\left(  \phi,\rho^{-1}\right)
\]
and%
\begin{equation}
\left\{  \int_{\left\{  \left\vert s\right\vert \leqslant\rho\right\}
}\left\vert s\right\vert ^{4}\left\vert \widehat{\phi}\left(  s\right)
\right\vert ^{2}ds\right\}  ^{1/2}\leqslant c\rho^{2}\omega_{2}\left(
\phi,\rho^{-1}\right)  . \label{xi piccolo}%
\end{equation}

\end{lemma}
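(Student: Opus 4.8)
The plan is to exploit the Fourier-side identity for the second-order difference operator, namely that the Fourier transform of $\Delta_h^2\phi$ is $(e^{4\pi i hs}-2e^{2\pi ihs}+1)\widehat{\phi}(s) = e^{2\pi ihs}(e^{2\pi ihs}-1)^2\widehat{\phi}(s)$, whose modulus equals $|e^{2\pi ihs}-1|^2 = 4\sin^2(\pi hs)$. By Plancherel,
\[
\int_{-\infty}^{+\infty}\bigl|\Delta_h^2\phi(x)\bigr|^2\,dx
= \int_{-\infty}^{+\infty}16\sin^4(\pi hs)\,\bigl|\widehat{\phi}(s)\bigr|^2\,ds .
\]
Taking the supremum over $|h|\leqslant\nu$ on the left gives $\omega_2(\phi,\nu)^2$; on the right one keeps a convenient single value of $h$. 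This identity is the engine for both inequalities.

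For the first inequality, set $\nu = \rho^{-1}$ and choose $h = \rho^{-1}$. On the range $\{|s|\geqslant\rho\}$ one would like $\sin^4(\pi h s)$ to be bounded below by a positive constant, but of course it vanishes periodically, so a single $h$ does not suffice. The standard fix is to average over $h$: integrate the Plancherel identity in $h$ over, say, $h\in[\rho^{-1}/2,\rho^{-1}]$ (or over a full period's worth of scales), use $\int 16\sin^4(\pi h s)\,dh \geqslant c\,\rho^{-1}$ for all $|s|\geqslant\rho$ (since for such $s$ the argument $\pi h s$ sweeps an interval of length $\geqslant \pi/2$, so the average of $\sin^4$ is bounded below), and bound each inner integral by $\omega_2(\phi,\rho^{-1})^2$. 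Dividing by the length $\rho^{-1}$ of the $h$-interval yields $\int_{\{|s|\geqslant\rho\}}|\widehat\phi(s)|^2\,ds \leqslant c\,\omega_2(\phi,\rho^{-1})^2$, which is the claim.

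For the second inequality \eqref{xi piccolo}, the point is the complementary elementary estimate: on $\{|s|\leqslant\rho\}$ with $h\leqslant\rho^{-1}$ one has $|hs|\leqslant 1$, hence $\sin^2(\pi h s)\geqslant c\,(hs)^2$, so $16\sin^4(\pi hs)\geqslant c\,h^4 s^4$. Plugging this lower bound into the Plancherel identity with $h=\rho^{-1}$ gives
\[
\rho^{-4}\int_{\{|s|\leqslant\rho\}}|s|^4\,|\widehat\phi(s)|^2\,ds
\leqslant c\int_{-\infty}^{+\infty}16\sin^4(\pi\rho^{-1}s)\,|\widehat\phi(s)|^2\,ds
\leqslant c\,\omega_2(\phi,\rho^{-1})^2 ,
\]
and multiplying through by $\rho^4$ and taking square roots is exactly \eqref{xi piccolo}.

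The only mildly delicate point is the averaging argument in the first inequality — one must pick the $h$-interval so that the lower bound $\int 16\sin^4(\pi h s)\,dh \geqslant c\rho^{-1}$ holds uniformly for every $|s|\geqslant\rho$ (not merely for $|s|$ near $\rho$); choosing an interval of the form $[\rho^{-1}/2,\rho^{-1}]$ works because then $\pi hs$ ranges over an interval of length at least $\pi/2$, forcing a definite amount of mass of $\sin^4$. Everything else is Plancherel plus the two trivial trigonometric inequalities $\sin^4 t \leqslant 1$ and $\sin t \geqslant (2/\pi)t$ on $[0,\pi/2]$, together with the definition of $\omega_2$.
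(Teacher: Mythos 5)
Your treatment of the first inequality is correct and genuinely different from the paper's. The paper constructs a delayed-means kernel $V_\rho(x) = 2\rho\,\eta(\rho x) - \tfrac{\rho}{2}\eta(\tfrac{\rho x}{2})$ with $\widehat V_\rho$ supported in $\{|s|<\rho\}$, writes $\phi - V_\rho*\phi$ as an integral of second differences $\Delta^2_{-\rho^{-1}z}\phi$ against $\eta(z)\,dz$, and then uses Minkowski's integral inequality together with the submultiplicative bound $\omega_2(\phi,\lambda\nu)\leqslant(1+\lambda)^2\omega_2(\phi,\nu)$. Your averaging argument sidesteps both the auxiliary kernel and that growth bound: you integrate the Plancherel identity
\[
\int_{\mathbb R}\bigl|\Delta_h^2\phi(x)\bigr|^2\,dx=\int_{\mathbb R}16\sin^4(\pi hs)\,|\widehat\phi(s)|^2\,ds
\]
in $h$ over $[\rho^{-1}/2,\rho^{-1}]$ and use that for $|s|\geqslant\rho$ the variable $\pi hs$ sweeps an interval of length $\geqslant\pi/2$, hence $\int_{\rho^{-1}/2}^{\rho^{-1}}16\sin^4(\pi hs)\,dh\geqslant c\rho^{-1}$ uniformly; Fubini then gives the claim. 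This is more elementary and self-contained than the paper's argument, and it buys you independence from the DeVore--Lorentz inequality.

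The second inequality as written contains an error. You claim that $|s|\leqslant\rho$ and $h\leqslant\rho^{-1}$ give $|hs|\leqslant 1$ and hence $\sin^2(\pi hs)\geqslant c\,(hs)^2$, and then you plug in $h=\rho^{-1}$. But this is false near $|hs|=1$: at $hs=1$ one has $\sin(\pi hs)=0$, so no constant $c>0$ can work. (Indeed the inequality $\sin t\geqslant(2/\pi)t$ that you cite is only valid on $[0,\pi/2]$, i.e.\ for $|hs|\leqslant 1/2$.) The fix is the one the paper itself makes: choose $h$ a fixed fraction of $\rho^{-1}$ — the paper takes $h=(4\pi\rho)^{-1}$, but $h=(2\rho)^{-1}$ suffices — so that $|\pi hs|\leqslant\pi/2$ for all $|s|\leqslant\rho$, which gives $16\sin^4(\pi hs)\geqslant c\,h^4s^4$ with $h^4\approx\rho^{-4}$, and then bound the left side of the Plancherel identity by $\omega_2(\phi,\rho^{-1})^2$ using that $|h|<\rho^{-1}$ and $\omega_2$ is nondecreasing in its second argument. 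With this correction your second argument coincides in substance with the paper's.
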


\begin{proof}
Let $\eta\in\mathcal{S}\left(  \mathbb{R}\right)  $ satisfy $\widehat{\eta
}\left(  0\right)  =1$ and $\widehat{\eta}\left(  s\right)  =0$ for
$\left\vert s\right\vert \geqslant1$. Let%
\[
V_{\rho}\left(  x\right)  =2\rho\eta\left(  \rho x\right)  -\frac{\rho}{2}%
\eta\left(  \frac{\rho x}{2}\right)  .
\]
Since%
\[
\widehat{V}_{\rho}\left(  s\right)  =2\widehat{\eta}\left(  \rho^{-1}s\right)
-\widehat{\eta}\left(  2\rho^{-1}s\right)
\]
it follows that $\widehat{V_{\rho}}\left(  s\right)  =0$ if $\left\vert
s\right\vert \geqslant\rho$. Then Plancherel Theorem gives%
\begin{align*}
\int_{\left\{  \left\vert s\right\vert \geqslant\rho\right\}  }\left\vert
\widehat{\phi}\left(  s\right)  \right\vert ^{2}ds  &  \leqslant
\int_{\mathbb{R}}\left\vert \left(  1-\widehat{V_{\rho}}\left(  s\right)
\right)  \widehat{\phi}\left(  s\right)  \right\vert ^{2}ds\\
&  =\int_{\mathbb{R}}\left\vert \phi\left(  x\right)  -V_{\rho}\ast\phi\left(
x\right)  \right\vert ^{2}dx.
\end{align*}
Since%
\begin{align*}
V_{\rho}\ast\phi\left(  x\right)   &  =\int_{\mathbb{R}}\phi\left(
x-y\right)  \left(  2\rho\eta\left(  \rho y\right)  -\frac{\rho}{2}\eta\left(
\frac{\rho y}{2}\right)  \right)  dy\\
&  =\int_{\mathbb{R}}2\phi\left(  x-y\right)  \rho\eta\left(  \rho y\right)
dy-\int_{\mathbb{R}}\phi\left(  x-y\right)  \frac{\rho}{2}\eta\left(
\frac{\rho y}{2}\right)  dy\\
&  =\int_{\mathbb{R}}\left[  2\phi\left(  x-\rho^{-1}z\right)  -\phi\left(
x-2\rho^{-1}z\right)  \right]  \eta\left(  z\right)  dz
\end{align*}
and $\int_{\mathbb{R}}\eta\left(  z\right)  dz=\widehat{\eta}\left(  0\right)
=1$, we have%
\begin{align*}
\phi\left(  x\right)  -V_{\rho}\ast\phi\left(  x\right)   &  =\int
_{\mathbb{R}}\phi\left(  x\right)  \eta\left(  z\right)  dz-\int_{\mathbb{R}%
}\left[  2\phi\left(  x-\rho^{-1}z\right)  -\phi\left(  x-2\rho^{-1}z\right)
\right]  \eta\left(  z\right)  dz\\
&  =\int_{\mathbb{R}}\left[  \phi\left(  x\right)  -2\phi\left(  x-\rho
^{-1}z\right)  +\phi\left(  x-2\rho^{-1}z\right)  \right]  \eta\left(
z\right)  dz\\
&  =\int_{\mathbb{R}}\Delta_{-\rho^{-1}z}^{2}\phi\left(  x\right)  \eta\left(
z\right)  dz.
\end{align*}
Since%
\begin{equation}
\omega_{2}\left(  \phi,\rho^{-1}\left\vert z\right\vert \right)
\leqslant\left(  1+\left\vert z\right\vert ^{2}\right)  \,\omega_{2}\left(
\phi,\rho^{-1}\right)  , \label{DeVore}%
\end{equation}
see \cite[Chapter 2, \S 7]{DeV-L}, then Minkowski integral inequality yields%
\begin{align*}
\left\{  \int_{\left\{  \left\vert s\right\vert \geqslant\rho\right\}
}\left\vert \widehat{\phi}\left(  s\right)  \right\vert ^{2}ds\right\}
^{1/2}  &  =\left\{  \int_{\mathbb{R}}\left\vert \int_{\mathbb{R}}%
\Delta_{-\rho^{-1}z}^{2}\phi\left(  x\right)  \eta\left(  z\right)
dz\right\vert ^{2}dx\right\}  ^{1/2}\\
&  \leqslant\int_{\mathbb{R}}\left\{  \int_{\mathbb{R}}\left\vert
\Delta_{-\rho^{-1}z}^{2}\phi\left(  x\right)  \right\vert ^{2}dx\right\}
^{1/2}\left\vert \eta\left(  z\right)  \right\vert dz\\
&  \leqslant\int_{\mathbb{R}}\omega_{2}\left(  \phi,\rho^{-1}\left\vert
z\right\vert \right)  \left\vert \eta\left(  z\right)  \right\vert dz\\
&  \leqslant\omega_{2}\left(  \phi,\rho^{-1}\right)  \int_{\mathbb{R}}\left(
1+\left\vert z\right\vert \right)  ^{2}\left\vert \eta\left(  z\right)
\right\vert dz\\
&  \leqslant c\,\omega_{2}\left(  \phi,\rho^{-1}\right)  .
\end{align*}
To prove (\ref{xi piccolo}) let $h=\left(  4\pi\rho\right)  ^{-1}$. Then, for
$\left\vert s\right\vert \leqslant\rho$, we have $\left\vert 2\pi
sh\right\vert \leqslant c\left\vert e^{2\pi ish}-1\right\vert $, so that%
\begin{align*}
\int_{\left\{  \left\vert s\right\vert \leqslant\rho\right\}  }\left\vert
s\right\vert ^{4}\left\vert \widehat{\phi}\left(  s\right)  \right\vert
^{2}ds  &  =2^{4}\rho^{4}\int_{\left\{  \left\vert s\right\vert \leqslant
\rho\right\}  }\left\vert 2\pi sh\right\vert ^{4}\left\vert \widehat{\phi
}\left(  s\right)  \right\vert ^{2}ds\\
&  \leqslant c\rho^{4}\int_{\left\{  \left\vert s\right\vert \leqslant
\rho\right\}  }\left\vert e^{2\pi ish}-1\right\vert ^{4}\left\vert
\widehat{\phi}\left(  s\right)  \right\vert ^{2}ds\\
&  =c\rho^{4}\int_{\mathbb{R}}\left\vert \left(  e^{2\pi ish}-1\right)
^{2}\,\widehat{\phi}\left(  s\right)  \right\vert ^{2}ds\\
&  =c\rho^{4}\int_{\mathbb{R}}\left\vert \widehat{\Delta_{h}^{2}\phi}\left(
s\right)  \right\vert ^{2}ds\leqslant c\rho^{4}\omega_{2}\left(  \phi
,\rho^{-1}\right)  ^{2}.
\end{align*}

\end{proof}

\begin{lemma}
\label{Lemma Media}There exist four positive constants $\alpha,\beta
,c_{1},c_{2}$, such that, for every $f:\mathbb{R\rightarrow R}$ supported,
nonnegative and concave in the interval $\left[  -1,1\right]  $ and every
$\rho\geqslant2\alpha^{-1}$, we have%
\[
\frac{c_{1}}{\rho}\mu_{f}\left(  \rho^{-1}\right)  ^{2}\leqslant\int
_{\alpha\rho\leqslant\left\vert s\right\vert \leqslant\beta\rho}\left\vert
\widehat{f}\left(  s\right)  \right\vert ^{2}ds\leqslant\frac{c_{2}}{\rho}%
\mu_{f}\left(  \rho^{-1}\right)  ^{2}%
\]
where $\mu_{f}\left(  \rho^{-1}\right)  $ comes from Definition
\ref{Def delta}.
\end{lemma}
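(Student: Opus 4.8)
The plan is to establish the two inequalities separately. The hypothesis $\rho\geqslant 2\alpha^{-1}$ enters precisely because it guarantees $\alpha\rho\geqslant2$, so that Lemma \ref{Stime trasformata} (valid for radii $\geqslant1$) and Proposition \ref{StimaBilatera} (valid for scales $<1/2$) may be applied at the scales $(\alpha\rho)^{-1}$ and $(\beta\rho)^{-1}$; shrinking $\alpha$ a little more if needed makes these applications strict. Throughout I would use two elementary ``doubling'' facts about $\mu_f$, both immediate from (\ref{lambda2lambda1}) and (\ref{lambda1<lambda2-1})--(\ref{lambda1<lambda2-2}): for $0<h\leqslant\nu<1/2$ one has $\mu_f(h)\leqslant 2\mu_f(\nu)$ and $\mu_f(\nu)\leqslant(\nu/h)\,\mu_f(h)$. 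In particular $\mu_f$ is comparable to $\mu_f(\rho^{-1})$ at any scale comparable to $\rho^{-1}$, and $\mu_f$ grows at most linearly in its argument.

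The upper bound is easy and in fact holds for any $\alpha,\beta$: Podkorytov's pointwise estimate (\ref{Podkorytov}) gives $|\widehat f(s)|^{2}\leqslant|s|^{-2}\mu_f(|s|^{-1})^{2}$, and on $\alpha\rho\leqslant|s|\leqslant\beta\rho$ the doubling facts yield $\mu_f(|s|^{-1})\leqslant C_\alpha\,\mu_f(\rho^{-1})$; since $\int_{\alpha\rho\leqslant|s|\leqslant\beta\rho}|s|^{-2}\,ds\leqslant 2(\alpha\rho)^{-1}$ this gives the claimed bound $c_2\rho^{-1}\mu_f(\rho^{-1})^{2}$.

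For the lower bound I would exploit the sharp two-sided control of $\|\Delta_h^{2}f\|_{2}$ from Proposition \ref{StimaBilatera}. Fix $h=\rho^{-1}$. Since $\widehat{\Delta_h^{2}f}(s)=(e^{2\pi i sh}-1)^{2}\widehat f(s)$, Plancherel and the first inequality in (\ref{DoppiaStima}) give
\[
\int_{\mathbb R}|e^{2\pi i sh}-1|^{4}\,|\widehat f(s)|^{2}\,ds=\|\Delta_h^{2}f\|_{2}^{2}\geqslant c_1\,h\,\mu_f(h)^{2}=c_1\,\rho^{-1}\mu_f(\rho^{-1})^{2}.
\]
Now decompose $\mathbb R$ into the ``small'' set $\{|s|<\alpha\rho\}$, the annulus $\{\alpha\rho\leqslant|s|\leqslant\beta\rho\}$, and the ``large'' set $\{|s|>\beta\rho\}$. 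On the large set, using $|e^{i\theta}-1|\leqslant2$ and then Lemma \ref{Stime trasformata} with (\ref{DoppiaStimaOmega}) and doubling, the contribution is at most $16\int_{|s|>\beta\rho}|\widehat f|^{2}\leqslant c\,\omega_2(f,(\beta\rho)^{-1})^{2}\leqslant c\,(\beta\rho)^{-1}\mu_f((\beta\rho)^{-1})^{2}\leqslant c'\,\beta^{-1}\rho^{-1}\mu_f(\rho^{-1})^{2}$. On the small set, using $|e^{i\theta}-1|\leqslant|\theta|$ and then (\ref{xi piccolo}) with (\ref{DoppiaStimaOmega}), the contribution is at most
\[
(2\pi\rho^{-1})^{4}\int_{|s|\leqslant\alpha\rho}|s|^{4}|\widehat f|^{2}\leqslant c\,\rho^{-4}(\alpha\rho)^{4}\,\omega_2(f,(\alpha\rho)^{-1})^{2}\leqslant c\,\rho^{-4}(\alpha\rho)^{3}\mu_f((\alpha\rho)^{-1})^{2}\leqslant c'\,\alpha\,\rho^{-1}\mu_f(\rho^{-1})^{2},
\]
the last step using $\mu_f((\alpha\rho)^{-1})\leqslant\alpha^{-1}\mu_f(\rho^{-1})$. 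Choosing $\alpha$ small and $\beta$ large, depending only on the absolute constants above, makes the small-set and large-set contributions together at most $\tfrac12 c_1\rho^{-1}\mu_f(\rho^{-1})^{2}$; hence the annulus carries at least $\tfrac12 c_1\rho^{-1}\mu_f(\rho^{-1})^{2}$, and since $|e^{2\pi i sh}-1|^{4}\leqslant16$ there, we obtain $\int_{\alpha\rho\leqslant|s|\leqslant\beta\rho}|\widehat f|^{2}\,ds\geqslant c_1'\,\rho^{-1}\mu_f(\rho^{-1})^{2}$.

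The delicate step is the small-set estimate: the naive scaling bound $\omega_2(f,(\alpha\rho)^{-1})\lesssim\alpha^{-2}\omega_2(f,\rho^{-1})$ coming from (\ref{DeVore}) yields no gain in $\alpha$, so one must instead combine the sharp equivalence $\omega_2(f,\nu)\asymp\nu^{1/2}\mu_f(\nu)$ with the concavity bound (\ref{lambda1<lambda2-1}) — which forces $\mu_f$ to grow at most linearly — in order to recover the saving factor $\alpha$ that closes the argument. Everything else is bookkeeping with the doubling properties of $\mu_f$ and the elementary inequalities for $|e^{i\theta}-1|$.
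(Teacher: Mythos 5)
Your proof is correct and follows essentially the same strategy as the paper's: for the lower bound, start from Plancherel applied to $\Delta_h^2 f$ at $h=\rho^{-1}$, split frequency space into the three regions $\{|s|<\alpha\rho\}$, $\{\alpha\rho\leqslant|s|\leqslant\beta\rho\}$, $\{|s|>\beta\rho\}$, control the tails by Lemma \ref{Stime trasformata}, translate $\omega_{2}$ into $\nu^{1/2}\mu_f(\nu)$ via Proposition \ref{StimaBilatera}, and use the at-most-linear growth of $\mu_f$ (from (\ref{lambda1<lambda2-1})--(\ref{lambda1<lambda2-2})) to collect the saving factors $\alpha$ and $\beta^{-1}$. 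The only cosmetic difference is that the paper bounds $\omega_2(f,\rho^{-1})^2$ (the sup over $|h|\leqslant\rho^{-1}$) rather than $\|\Delta_{\rho^{-1}}^2 f\|_2^2$ directly, and its written upper-bound argument routes through Lemma \ref{Stime trasformata} and (\ref{DeVore}) while you invoke Podkorytov's pointwise estimate (\ref{Podkorytov}) directly — a path the paper itself flags as available. Your remark that (\ref{DeVore}) alone yields no gain on the small-frequency piece, and that one must instead pass to the two-sided equivalence $\omega_2(f,\nu)\asymp\nu^{1/2}\mu_f(\nu)$ together with the linear upper bound on $\mu_f$, correctly identifies the crux of the argument.
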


The upper bound follows from (\ref{Podkorytov}). The lower bound has been
first proved by Podkorytov \cite{P2}. We provide an alternative proof that
depends on the previous lemma and may be of independent interest.

\begin{proof}
Let $\alpha<\beta$, by Lemma \ref{Stime trasformata}, Proposition
\ref{StimaBilatera} and (\ref{DeVore}) we have%
\begin{align*}
\int_{\alpha\rho\leqslant\left\vert s\right\vert \leqslant\beta\rho}\left\vert
\widehat{f}\left(  s\right)  \right\vert ^{2}ds  &  \leqslant\int_{\alpha
\rho\leqslant\left\vert s\right\vert }\left\vert \widehat{f}\left(  s\right)
\right\vert ^{2}ds\leqslant c\left[  \omega_{2}\left(  \phi,\alpha^{-1}%
\rho^{-1}\right)  \right]  ^{2}\\
&  \leqslant c\left(  1+\alpha^{-1}\right)  ^{2}\left[  \omega_{2}\left(
\phi,\rho^{-1}\right)  \right]  ^{2}\leqslant c\rho^{-1}\mu_{f}\left(
\rho^{-1}\right)  ^{2}.
\end{align*}
Let $\left\vert h\right\vert \leqslant\rho^{-1}$. Using Lemma
\ref{Stime trasformata} we obtain%
\begin{align*}
&  \int_{\mathbb{R}}\left\vert \Delta_{h}^{2}f\left(  x\right)  \right\vert
^{2}dx=\int_{\mathbb{R}}\left\vert e^{2\pi ihs}-1\right\vert ^{4}\left\vert
\widehat{f}\left(  s\right)  \right\vert ^{2}ds\\
\leqslant &  c\left\vert h\right\vert ^{4}\int_{\left\vert s\right\vert
\leqslant\alpha\rho}\left\vert s\right\vert ^{4}\left\vert \widehat{f}\left(
s\right)  \right\vert ^{2}ds+c\int_{\alpha\rho\leqslant\left\vert s\right\vert
\leqslant\beta\rho}\left\vert \widehat{f}\left(  s\right)  \right\vert
^{2}ds+c\int_{\left\vert s\right\vert \geqslant\beta\rho}\left\vert
\widehat{f}\left(  s\right)  \right\vert ^{2}ds\\
\leqslant &  c\alpha^{4}\omega_{2}\left(  f,\alpha^{-1}\rho^{-1}\right)
^{2}+c\int_{\alpha\rho\leqslant\left\vert s\right\vert \leqslant\beta\rho
}\left\vert \widehat{f}\left(  s\right)  \right\vert ^{2}ds+c\left[
\omega_{2}\left(  f,\beta^{-1}\rho^{-1}\right)  \right]  ^{2}.
\end{align*}
Hence, being $\left\vert h\right\vert \leqslant\rho^{-1}$,%
\begin{align*}
\omega_{2}\left(  f,\rho^{-1}\right)  ^{2}  &  \leqslant c\alpha^{4}\omega
_{2}\left(  f,\alpha^{-1}\rho^{-1}\right)  ^{2}\\
&  +c\int_{\alpha\rho\leqslant\left\vert s\right\vert \leqslant\beta\rho
}\left\vert \widehat{f}\left(  s\right)  \right\vert ^{2}ds+c\left[
\omega_{2}\left(  f,\beta^{-1}\rho^{-1}\right)  \right]  ^{2}.
\end{align*}
Since $\alpha^{-1}\rho^{-1}\leqslant1/2$ and $\beta^{-1}\rho^{-1}\leqslant
1/2$, by Proposition \ref{StimaBilatera} we have%
\begin{align*}
\rho^{-1}\mu_{f}\left(  \rho^{-1}\right)  ^{2}  &  \leqslant c\alpha^{3}%
\rho^{-1}\mu_{f}\left(  \alpha^{-1}\rho^{-1}\right)  ^{2}\\
&  +c\int_{\alpha\rho\leqslant\left\vert s\right\vert \leqslant\beta\rho
}\left\vert \widehat{f}\left(  s\right)  \right\vert ^{2}ds+c\beta^{-1}%
\rho^{-1}\mu_{f}\left(  \beta^{-1}\rho^{-1}\right)  ^{2}\\
&  \leqslant c\alpha\rho^{-1}\left[  \alpha\mu_{f}\left(  \alpha^{-1}\rho
^{-1}\right)  \right]  ^{2}\\
&  +c\int_{\alpha\rho\leqslant\left\vert s\right\vert \leqslant\beta\rho
}\left\vert \widehat{f}\left(  s\right)  \right\vert ^{2}ds+c\beta^{-1}%
\rho^{-1}\mu_{f}\left(  \beta^{-1}\rho^{-1}\right)  ^{2}\\
&  \leqslant c\alpha\rho^{-1}\mu_{f}\left(  \rho^{-1}\right)  ^{2}%
+c\int_{\alpha\rho\leqslant\left\vert s\right\vert \leqslant\beta\rho
}\left\vert \widehat{f}\left(  s\right)  \right\vert ^{2}ds+c\beta^{-1}%
\rho^{-1}\mu_{f}\left(  \rho^{-1}\right)  ^{2},
\end{align*}
because (\ref{lambda1<lambda2-1}) yields%
\[
\mu_{f}\left(  \alpha^{-1}\rho^{-1}\right)  \leqslant\alpha^{-1}\mu_{f}\left(
\rho^{-1}\right)
\]
and (\ref{lambda2lambda1}) gives%
\[
\mu_{f}\left(  \beta^{-1}\rho^{-1}\right)  \leqslant2\mu_{f}\left(  \rho
^{-1}\right)  .
\]
It follows that%
\[
\left(  1-c\alpha-c\beta^{-1}\right)  \rho^{-1}\mu_{f}\left(  \rho
^{-1}\right)  ^{2}\leqslant c\int_{\alpha\rho\leqslant\left\vert s\right\vert
\leqslant\beta\rho}\left\vert \widehat{f}\left(  s\right)  \right\vert
^{2}ds.
\]
Letting $\alpha$ sufficiently small and $\beta$ sufficiently large gives%
\[
\rho^{-1}\mu_{f}\left(  \rho^{-1}\right)  ^{2}\leqslant c\int_{\alpha
\rho\leqslant\left\vert s\right\vert \leqslant\beta\rho}\left\vert \widehat
{f}\left(  s\right)  \right\vert ^{2}ds
\]
for every $\rho\geqslant2\alpha^{-1}$.
\end{proof}

\begin{remark}
\label{Da sotto solo positivi}Since in the above lemma $f$ is real we have
$\left\vert \widehat{f}\left(  -s\right)  \right\vert =\left\vert \widehat
{f}\left(  s\right)  \right\vert $ and therefore we also obtain%
\[
\rho^{-1}\mu_{f}\left(  \rho^{-1}\right)  ^{2}\leqslant c\int_{\alpha\rho
}^{\beta\rho}\left\vert \widehat{f}\left(  s\right)  \right\vert ^{2}ds.
\]

\end{remark}

\begin{theorem}
\label{StimaDaSotto}Let $C\subset\mathbb{T}^{2}$ be a convex body, let
$\sigma\in\left[  \frac{1}{2},1\right]  $ and let $\delta_{0}>0$. Let $I$ be
an interval in $\mathbb{T}$ and let $\Theta=\left(  \cos\theta,\sin
\theta\right)  $ with $\theta\in I$. Assume there exists a constant $c_{1}>0$
such that every $0<\delta\leqslant\delta_{0}$ we have%
\[
\left\vert \gamma_{-\Theta}\left(  \delta\right)  \right\vert +\left\vert
\gamma_{\Theta}\left(  \delta\right)  \right\vert \geqslant c_{1}%
\delta^{\sigma}.
\]
Then there exist positive constants $c_{2},c_{3}$, independent of $\theta\in
I$, such that for every $\rho\geqslant c_{3}$,%
\[
\left\{  \int_{1/2\leqslant\left\vert \tau\right\vert \leqslant1}\left\vert
\widehat{\chi}_{C}\left(  \tau\rho\Theta\right)  \right\vert ^{2}%
d\tau\right\}  ^{1/2}\geqslant c_{2}\rho^{-1-\sigma}.
\]

\end{theorem}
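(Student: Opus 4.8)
The plan is to reduce the lower bound for the average of $|\widehat{\chi}_C(\tau\rho\Theta)|^2$ over $\tau\in[1/2,1]$ to the one-dimensional statement in Lemma~\ref{Lemma Media} (in the form of Remark~\ref{Da sotto solo positivi}), applied to the concave profile function $f$ attached to the direction $\Theta$. First I would recall the reduction from Section~\ref{Sezione Geometric}: for a fixed direction $\Theta$ one has $\widehat{\chi}_C(\rho\Theta)=\widehat{g}(\rho)$ where $g$ is the length of the slice of $C$ perpendicular to $\Theta$, and after an affine change of variables $g$ becomes a function $f$ which is nonnegative, supported and concave on $[-1,1]$; the change of variables is linear in $\rho$, so $\widehat{\chi}_C(\tau\rho\Theta)$ becomes $\lambda\,\widehat{f}(\lambda\tau\rho)$ for a fixed positive $\lambda=\lambda(\Theta)$ that is bounded above and below by constants depending only on $C$ (the diameter and width of $C$). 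Consequently $\int_{1/2\leqslant|\tau|\leqslant1}|\widehat{\chi}_C(\tau\rho\Theta)|^2\,d\tau$ is comparable, up to constants independent of $\theta$, to $\rho^{-1}\int_{J\rho}|\widehat{f}(u)|^2\,du$ where $J$ is an interval of the form $[\lambda/2,\lambda]$, i.e. a fixed annulus-in-one-variable around $\rho$.

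Next I would invoke Remark~\ref{Da sotto solo positivi}: there are absolute constants $\alpha<\beta$ such that $\rho^{-1}\mu_f(\rho^{-1})^2\leqslant c\int_{\alpha\rho}^{\beta\rho}|\widehat{f}(s)|^2\,ds$ for $\rho$ large. The subtlety is that the interval $[\alpha\rho,\beta\rho]$ produced by the lemma is a \emph{fixed dilate} of $\rho$, whereas I want the integral over $[(\lambda/2)\rho,\lambda\rho]$, and $\lambda$ varies with $\Theta$; but since $\lambda$ ranges in a fixed compact subinterval of $(0,\infty)$, I can cover $[\alpha\rho,\beta\rho]$ by finitely many intervals of the form $[(\lambda/2)\rho',\lambda\rho']$ with $\rho'$ comparable to $\rho$, or — more cleanly — first rescale so that the lemma is applied with $\rho$ replaced by a comparable value $\rho''$ chosen so that $[\alpha\rho'',\beta\rho'']\subset[(\lambda/2)\rho,\lambda\rho]$ after absorbing $\lambda$. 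Either way one gets
\[
\int_{1/2\leqslant|\tau|\leqslant1}|\widehat{\chi}_C(\tau\rho\Theta)|^2\,d\tau\geqslant c\,\rho^{-2}\,\mu_f\bigl(c'\rho^{-1}\bigr)^2
\]
with constants independent of $\theta\in I$, using also the doubling-type estimates $\mu_f(\alpha^{-1}t)\leqslant\alpha^{-1}\mu_f(t)$ and $\mu_f(\beta^{-1}t)\leqslant 2\mu_f(t)$ from \eqref{lambda2lambda1} and \eqref{lambda1<lambda2-1} to replace the argument $c'\rho^{-1}$ by $\rho^{-1}$.

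Finally I would connect $\mu_f$ to the chords. By Definition~\ref{Def delta}, $\mu_f(h)=\max\{f(-1+|h|),f(1-|h|)\}$, and under the change of variables the value $f(1-|h|)$ (resp. $f(-1+|h|)$) is, up to a fixed positive factor depending only on $C$, the length of the chord $\gamma_\Theta(\delta)$ (resp. $\gamma_{-\Theta}(\delta)$) with $\delta$ comparable to $h$; hence $\mu_f(\rho^{-1})\geqslant c\bigl(|\gamma_\Theta(c''\rho^{-1})|+|\gamma_{-\Theta}(c''\rho^{-1})|\bigr)$ up to constants, for $\rho$ large enough that $c''\rho^{-1}\leqslant\delta_0$. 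Combining with the hypothesis $|\gamma_{-\Theta}(\delta)|+|\gamma_\Theta(\delta)|\geqslant c_1\delta^\sigma$ gives $\mu_f(\rho^{-1})\geqslant c\,\rho^{-\sigma}$, and plugging this into the displayed inequality yields $\int_{1/2\leqslant|\tau|\leqslant1}|\widehat{\chi}_C(\tau\rho\Theta)|^2\,d\tau\geqslant c\,\rho^{-2-2\sigma}$, i.e. the claimed bound $c_2\rho^{-1-\sigma}$ after taking square roots. The main obstacle is the bookkeeping in the second step: making sure all the comparison constants (coming from the affine normalization of $f$, the mismatch between the lemma's fixed annulus $[\alpha\rho,\beta\rho]$ and the target annulus, and the doubling inequalities for $\mu_f$) can be chosen uniformly in $\theta\in I$ — this hinges on the geometric fact that the normalizing factor $\lambda(\Theta)$ and the proportionality constant between $f$-values and chord lengths stay bounded away from $0$ and $\infty$ as $\Theta$ varies, which follows from compactness of $\overline I$ together with $C$ being a fixed bounded convex body with non-empty interior.
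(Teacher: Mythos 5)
Your proposal takes essentially the same route as the paper: reduce to the one-dimensional profile $g$ along $\Theta$, normalize by an affine change of variables to a nonnegative concave $f$ on $[-1,1]$, apply the lower bound of Lemma~\ref{Lemma Media} (via Remark~\ref{Da sotto solo positivi}) to relate the average of $|\widehat{f}|^{2}$ over an annulus near $\rho$ to $\mu_{f}(\rho^{-1})$, identify $\mu_{f}$ with the chord lengths $|\gamma_{\pm\Theta}(\delta)|$, and invoke the hypothesis, with uniformity in $\theta$ following from the bounded width $B-A$. Both your write-up and the paper leave the fitting of the lemma's annulus $[\alpha\rho,\beta\rho]$ into the target interval $[\rho/2,\rho]$ somewhat implicit --- you flag it as ``the main obstacle,'' while the paper disposes of it with ``a suitable change of variables'' --- so the two arguments stand or fall together on that bookkeeping step.
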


\begin{proof}
Assume, without loss of generality, that $\Theta=\left(  1,0\right)  $. Then%
\[
\widehat{\chi}_{C}\left(  \rho,0\right)  =\widehat{g}\left(  \rho\right)
\]
with%
\[
g\left(  t_{1}\right)  =\int_{\mathbb{R}}\chi_{C}\left(  t_{1},t_{2}\right)
dt_{2}.
\]
Observe that $g$ is nonnegative, supported and concave in a suitable interval
$\left[  A,B\right]  $. Let%
\[
f\left(  x\right)  =g\left(  \frac{A+B}{2}+x\frac{B-A}{2}\right)
\]
then%
\[
\widehat{f}\left(  s\right)  =\frac{2}{B-A}e^{2\pi is\frac{A+B}{B-A}%
}\,\widehat{g}\left(  \frac{2s}{B-A}\right)  .
\]
By Lemma \ref{Lemma Media} and Remark \ref{Da sotto solo positivi} we have%
\[
\frac{c_{1}}{\rho^{2}}\mu_{f}\left(  \rho^{-1}\right)  ^{2}\leqslant\frac
{1}{\rho}\int_{\alpha\rho}^{\beta\rho}\left\vert \widehat{f}\left(  s\right)
\right\vert ^{2}ds,
\]
so that%
\[
\frac{c}{\rho}\left(  f\left(  -1+\rho^{-1}\right)  +f\left(  1-\rho
^{-1}\right)  \right)  \leqslant\left\{  \int_{\alpha}^{\beta}\left\vert
\widehat{f}\left(  \rho\tau\right)  \right\vert ^{2}d\tau\right\}  ^{1/2}.
\]
Hence%
\begin{align*}
\frac{1}{\rho}\left(  g\left(  A+\frac{B-A}{2\rho}\right)  +g\left(
B-\frac{B-A}{2\rho}\right)  \right)   &  \leqslant\frac{c}{B-A}\left\{
\int_{\alpha}^{\beta}\left\vert \widehat{g}\left(  \frac{2\rho\tau}%
{B-A}\right)  \right\vert ^{2}d\tau\right\}  ^{1/2}\\
&  =\frac{c}{\sqrt{B-A}}\left\{  \int_{\frac{2\alpha}{B-A}}^{\frac{2\beta
}{B-A}}\left\vert \widehat{g}\left(  \rho\omega\right)  \right\vert
^{2}d\omega\right\}  ^{1/2}.
\end{align*}
Since $C$ is a convex body there exists a positive constant $\sigma$ (only
depending on $C$) such that%
\[
\sigma\leqslant B-A\leqslant\operatorname{diam}\left(  C\right)  .
\]
Then for suitable constants $c,\alpha^{\prime}$ and $\beta^{\prime}$ we have%
\[
\frac{1}{\rho}\left(  g\left(  A+\frac{B-A}{2\rho}\right)  +g\left(
B-\frac{B-A}{2\rho}\right)  \right)  \leqslant c\left\{  \int_{\alpha^{\prime
}}^{\beta^{\prime}}\left\vert \widehat{g}\left(  \rho\omega\right)
\right\vert ^{2}d\omega\right\}  ^{1/2}.
\]
Since%
\[
g\left(  B-\frac{B-A}{2\rho}\right)  =\gamma_{\Theta}\left(  \frac{B-A}{2\rho
}\right)
\]
and%
\[
g\left(  A+\frac{B-A}{2\rho}\right)  =\gamma_{-\Theta}\left(  \frac{B-A}%
{2\rho}\right)
\]
with $\Theta=\left(  1,0\right)  $, we have%
\[
\frac{c}{\rho}\left(  \frac{B-A}{2\rho}\right)  ^{\sigma}\leqslant\left\{
\int_{\alpha^{\prime}}^{\beta^{\prime}}\left\vert \widehat{g}\left(
\rho\omega\right)  \right\vert ^{2}d\omega\right\}  ^{1/2}.
\]
Hence%
\[
\frac{c}{\rho^{1+\sigma}}\leqslant\left\{  \int_{\alpha^{\prime}}%
^{\beta^{\prime}}\left\vert \widehat{g}\left(  \rho\omega\right)  \right\vert
^{2}d\omega\right\}  ^{1/2}.
\]
A suitable change of variables gives%
\[
\frac{c}{\rho^{1+\sigma}}\leqslant\left\{  \int_{1/2}^{1}\,\left\vert
\widehat{g}\left(  \rho\omega\right)  \right\vert ^{2}d\omega\right\}  ^{1/2}%
\]
and then%
\[
\left\{  \int_{1/2}^{1}\left\vert \widehat{\chi}_{C}\left(  \rho
\omega,0\right)  \right\vert ^{2}d\omega\right\}  ^{1/2}\geqslant
c\rho^{-1-\sigma}.
\]

\end{proof}

\section{Proofs of the main results}

\begin{proof}
[Proof of Proposition \ref{stimadeltadasotto}]Let $P_{0}$ be a fixed interior
point of $C$, let%
\[
d=\inf_{P\in\partial C}\operatorname*{dist}\left(  P,P_{0}\right)
\]
and let $D$ be the disk of radious $d/2$ centered at $P_{0}$. Clearly
$D\subset C$. Let us fix a direction $\Theta$ and let $P\in\partial C$ be such
that%
\begin{equation}
P\cdot\Theta=\inf_{x\in C}x\cdot\Theta. \label{1}%
\end{equation}
Then%
\[
\gamma_{\Theta}\left(  \delta\right)  =\left\vert \left\{  x\in C:x\cdot
\Theta=P\cdot\Theta+\delta\right\}  \right\vert
\]
Without loss of generality we can assume that $P$ is the origin and that
$P_{0}=\left(  0,y_{0}\right)  $ for some $y_{0}\geqslant d$. Let
$P_{1}=\left(  -d/2,y_{0}\right)  $, $P_{2}=\left(  d/2,y_{0}\right)  $ and
let $T$ be the triangle with vertices $P,P_{1},P_{2}$ (see Figure
\ref{Figure3}). Since $T\subset C$ we have%
\[
\left\vert \gamma_{\Theta}\left(  \delta\right)  \right\vert \geqslant
\left\vert \left\{  x\in T:x\cdot\Theta=\delta\right\}  \right\vert .
\]
From (\ref{1}) we obtain%
\begin{equation}
P_{1}\cdot\Theta\geqslant0\hspace{1cm}\text{and}\hspace{1cm}P_{2}\cdot
\Theta\geqslant0. \label{2}%
\end{equation}
Let us write $P_{1}=\left\vert P_{1}\right\vert \left(  -\sin\gamma,\cos
\gamma\right)  $, $P_{2}=\left\vert P_{2}\right\vert \left(  \sin\gamma
,\cos\gamma\right)  $ and let $\Theta=\left(  \cos\theta,\sin\theta\right)  $.
Using (\ref{2}) we have $\gamma\leqslant\theta\leqslant\pi-\gamma$. Observe
that
\[
\tan\gamma=\frac{d}{2y_{0}}\geqslant\frac{d}{2\sup\limits_{P\in\partial
C}d\left(  P,P_{0}\right)  }%
\]
so that $\gamma\geqslant\gamma_{0}$ with $\gamma_{0}$ independent of $\Theta$
and $\delta$. Since $T$ is symmetric about the vertical axis it suffices to
consider the case $\gamma\leqslant\theta\leqslant\frac{\pi}{2}$. Let
$0<\delta\leqslant\frac{d}{2}\sin\left(  \gamma_{0}\right)  $ (this ensures
that the point $Q_{0}$ is inside $T$). Then%
\begin{align*}
&  \left\vert \left\{  x\in T:x\cdot\Theta=\delta\right\}  \right\vert
=\left\vert Q_{1}-Q_{2}\right\vert \geqslant\left\vert Q_{2}-Q_{0}\right\vert
\\
&  =\delta\left[  \tan\left(  \frac{\pi}{2}-\theta\right)  -\tan\left(
\frac{\pi}{2}-\theta-\gamma\right)  \right]  \geqslant\delta\tan
\gamma\geqslant\delta\tan\gamma_{0}.
\end{align*}
\begin{figure}[ptb]
\begin{tikzpicture}%
[line cap=round,line join=round,>=triangle 45,x=0.8cm,y=0.8cm]
\begin{axis}[x=0.8cm,y=0.8cm,axis lines=middle,xmin=-6,xmax=6,ymin=-1,ymax=8.5,ticks=none]
\fill[line width=0.1pt,color=zzttqq,fill=zzttqq,fill opacity=0.1] (0.,0.) -- (-2.,6.12) -- (2.,6.12) -- cycle;
\draw[line width=0.pt,color=black] (0,0) -- (0.:1) arc (0.:37.56:1) -- cycle;
\draw[line width=0.pt,color=black] (0,0) -- (71.9:1) arc (71.9:90.:1) -- cycle;
\draw[line width=0.5pt,color=zzttqq] (0.,0.)-- (-2.,6.12) -- (2.,6.12) -- cycle;
\draw [line width=0.5pt,dash pattern=on 2pt off 2pt,domain=-6:1.56] plot(\x,{(--1.96-1.11*\x)/0.854});
\draw[line width=0.5pt] (0.,0.)-- (1.11,0.854);
\draw[line width=1.pt] (-1.3,4)-- (0.53,1.612);
\draw[line width=0.5pt] (1.11,0.853)-- (0.53,1.612);
\begin{scriptsize}
\draw[fill=black] (0.,0.) circle (1pt);
\draw[color=black] (-0.2,-0.2) node {P};
\draw[fill=black] (0,6.12) circle (1pt);
\draw[color=black] (-0.2,6.4) node {$P_0$};
\draw[fill=black] (-2.,6.12) circle (1pt);
\draw[color=black] (-2.2,6.4) node {$P_1$};
\draw[fill=black] (2.,6.12) circle (1pt);
\draw[color=black] (2.2,6.4) node {$P_2$};
\draw[fill=black] (-1.3,4) circle (1pt);
\draw[color=black] (-1,4.1) node {$Q_1$};
\draw[fill=black] (0.,2.3) circle (1pt);
\draw[color=black] (0.25,2.6) node {$Q_0$};
\draw[fill=black] (0.53,1.61) circle (1pt);
\draw[color=black] (0.9,1.6) node {$Q_2$};
\draw[color=black] (1.15,0.35) node {$\theta$};
\draw[color=black] (0.2,1.2) node {$\gamma$};
\draw[color=black] (0.5,0.6) node {$\delta$};
\draw[color=black] (-2,2) node {$\partial C$};
\draw[fill=green,fill opacity=0.05, line width=1pt] (15,12.45) circle (19.5);
\draw (0,6.12) circle (2);
\end{scriptsize}
\end{axis}
\end{tikzpicture}
\caption{Proof of Proposition \ref{stimadeltadasotto}.}%
\label{Figure3}%
\end{figure}
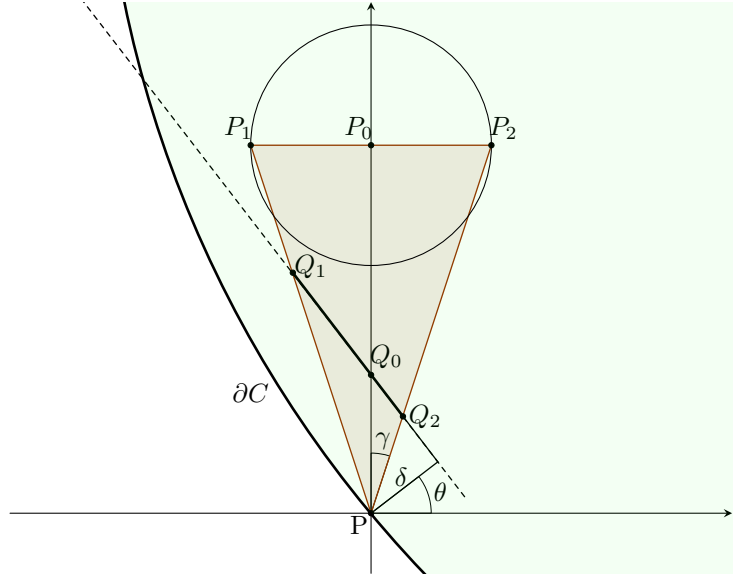
\end{proof}

To prove Theorem \ref{Theorem B-a} and Theorem \ref{irredistr} we first need a
mild variant of a classical result of J.W.S. Cassels and H. Montgomery (see
\cite{montgomery}). The following proof has been inspired by Siegel's analytic
proof of Minkowski's convex body theorem (see \cite{siegel}).

\begin{lemma}
\label{lemmaCassels}Let $U$ be a neighborhood of the origin. Then there exists
a positive constant $c$ such that for every convex symmetric body $\Omega$ in
$\mathbb{R}^{2}$ and every finite set $\left\{  u(j)\right\}  _{j=1}%
^{N}\subset\mathbb{T}^{2}$ we have%
\[
\sum_{m\in\left(  \Omega\setminus U\right)  \cap\mathbb{Z}^{2}}\left\vert
\sum_{j=1}^{N}e^{2\pi im\cdot u(j)}\right\vert ^{2}\geqslant
N\operatorname{area}({\Omega})/4-cN^{2}.
\]

\end{lemma}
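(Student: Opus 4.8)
The plan is to follow Siegel's analytic route to Minkowski's theorem, applied to the measure supported on the $N$ points. First I would introduce a nonnegative weight. Pick a smooth bump function $\psi\in\mathcal{C}_c^\infty(\mathbb{R}^2)$ supported in $U$, with $\psi\ge 0$, $\int\psi=1$, and such that its Fourier transform $\widehat\psi$ is real and nonnegative (e.g. take $\psi=\eta*\eta$ with $\eta$ a real even bump supported in $\frac12 U$). Consider the trigonometric polynomial
\[
F(x)=\sum_{j=1}^N \phi\bigl(x-u(j)\bigr),
\]
where $\phi$ is a periodization on $\mathbb{T}^2$ of a function adapted to $\Omega$; concretely I would take the "tent" $\phi$ whose Fourier coefficients are $\widehat\phi(m)=\chi_{\frac12\Omega}*\chi_{\frac12\Omega}(m)/\operatorname{area}(\frac12\Omega)$, so that $\widehat\phi(m)\ge 0$, $\widehat\phi(m)$ is supported in $\Omega\cap\mathbb{Z}^2$, $\widehat\phi(0)=1$, and $0\le\widehat\phi(m)\le 1$ throughout (this is the classical Fejér-type kernel attached to a symmetric convex body). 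The point of the symmetry and convexity of $\Omega$ is exactly that $\frac12\Omega+\frac12\Omega=\Omega$, so the autocorrelation lands inside $\Omega$.

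The heart of the argument is a two-sided estimate for
\[
\int_{\mathbb{T}^2}\Bigl|\sum_{j=1}^N\phi\bigl(x-u(j)\bigr)\Bigr|^2\,dx
=\sum_{m\in\mathbb{Z}^2}\widehat\phi(m)^2\,\Bigl|\sum_{j=1}^N e^{2\pi i m\cdot u(j)}\Bigr|^2.
\]
On the one hand, Parseval and $0\le\widehat\phi\le 1$, supp$\,\widehat\phi\subset\Omega\cap\mathbb{Z}^2$, give that this sum is at most $N^2$ (the $m=0$ term) plus $\sum_{m\in(\Omega\setminus U)\cap\mathbb{Z}^2}|\sum_j e^{2\pi i m\cdot u(j)}|^2$ — here one must check that the lattice points of $\Omega$ lying in the neighborhood $U$ contribute a bounded-in-$N$ error, which is where the hypothesis that $U$ is a fixed neighborhood of $0$ is used (only finitely many nonzero lattice points sit in $U$, and each contributes $\le N^2$, but we can absorb them — see the obstacle below). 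On the other hand, expanding $|\sum_j\phi(x-u(j))|^2$ in physical space and integrating, the diagonal terms $j=k$ contribute exactly $N\int_{\mathbb{T}^2}\phi^2 = N\|\phi\|_2^2$, and $\|\phi\|_2^2=\sum_m\widehat\phi(m)^2\ge\widehat\phi(0)^2=1$; more precisely one computes $\|\phi\|_2^2=\operatorname{area}(\Omega)$ (this is the normalization: $\sum_m \widehat\phi(m)^2$ is a Riemann sum for $\int_{\mathbb{R}^2}\widehat\phi(\xi)^2\,d\xi=\operatorname{area}(\Omega)$ when $\Omega$ is large, and for small $\Omega$ the single term already dominates — the clean statement is obtained by instead working with a kernel normalized so the diagonal gives $\operatorname{area}(\Omega)$, or by the scaling argument below). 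Combining the two bounds yields
\[
N\operatorname{area}(\Omega)\;\le\;\text{(off-diagonal)}\;+\;\sum_{m\in(\Omega\setminus U)\cap\mathbb{Z}^2}\Bigl|\sum_{j=1}^N e^{2\pi i m\cdot u(j)}\Bigr|^2\;+\;N^2,
\]
and the off-diagonal physical-space terms $\sum_{j\ne k}\phi(u(j)-u(k))$ are nonnegative, hence can be kept on the correct side — wait, they appear with a $+$ sign, so they must be controlled: here is where one uses that $\phi\le \|\phi\|_\infty$ and $\|\phi\|_\infty\le\operatorname{area}(\Omega)$... actually the standard trick is to note $\sum_{j,k}\phi(u(j)-u(k))=\sum_m\widehat\phi(m)|\sum_j e^{2\pi i m\cdot u(j)}|^2\le N^2\widehat\phi(0)+\sum_{m\ne 0}\widehat\phi(m)|\cdots|^2$, and then bound $\widehat\phi(m)\le\widehat\phi(m)^2$ is false, so one instead uses $\widehat\phi(m)\le 1$ and re-derives the inequality with the right combinatorial bookkeeping. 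In the clean writeup I would set up the single kernel identity
\[
\sum_{m\in\mathbb{Z}^2}\widehat\phi(m)\,\Bigl|\sum_{j=1}^N e^{2\pi i m\cdot u(j)}\Bigr|^2=\sum_{j,k=1}^N\phi\bigl(u(j)-u(k)\bigr)\ge N\phi(0)
\]
(dropping the nonnegative off-diagonal terms), note $\phi(0)=\sum_m\widehat\phi(m)\ge c\operatorname{area}(\Omega)$ by a lattice-point count, and then split off $m=0$ and the finitely many $m\in U\setminus\{0\}$ and use $\widehat\phi(m)\le 1$ on the remaining range.

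The main obstacle is the bookkeeping of the neighborhood $U$ and getting the constants uniform in $\Omega$. The subtlety is that $\Omega$ is an arbitrary symmetric convex body — possibly very flat, or very small, or very large — so one cannot simply say "$\widehat\phi(m)\ge 1/2$ on $\frac12\Omega$"; the lattice-point count $\#(\Omega\cap\mathbb{Z}^2)\asymp\operatorname{area}(\Omega)$ fails for thin bodies. The standard fix (and I expect the paper does this) is a scaling/averaging device: one first proves the inequality for a fixed reference body — say the unit square or a ball — by the above kernel argument, obtaining the clean constant; then one reduces the general $\Omega$ to this case by an affine change of variables on $\mathbb{T}^2=\mathbb{R}^2/\mathbb{Z}^2$, or by integrating the fixed-body inequality over a family of translates/dilates of a small box tiling $\Omega$ — the area $\operatorname{area}(\Omega)$ emerges as the total measure of the tiling, the error $cN^2$ accumulates as (number of boxes)$\times(\text{const}\cdot N^2/\text{number of boxes})$, and the single "bad" neighborhood $U$ is handled once since it is fixed. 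Making this reduction rigorous, and in particular checking that the $cN^2$ error does not blow up when $\Omega$ is large (so there are many boxes), is the delicate point; the Siegel-style exact identity for the reference body is what makes the constant in front of $\operatorname{area}(\Omega)$ come out to be a clean $1/4$ after the autocorrelation normalization $\operatorname{area}(\frac12\Omega)=\operatorname{area}(\Omega)/4$.
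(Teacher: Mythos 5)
Your overall framework is exactly the paper's: build a nonnegative trigonometric polynomial $T$ with $0\leqslant\widehat T\leqslant 1$, $\widehat T$ supported in $\Omega\cap\mathbb{Z}^2$ and $T(0)\geqslant c\operatorname{area}(\Omega)$, then use
\[
\sum_{m\in\Omega\cap\mathbb{Z}^2}\Bigl|\sum_j e^{2\pi i m\cdot u(j)}\Bigr|^2
\geqslant \sum_m \widehat T(m)\Bigl|\sum_j e^{2\pi i m\cdot u(j)}\Bigr|^2
=\sum_{j,k}T\bigl(u(j)-u(k)\bigr)\geqslant N\,T(0),
\]
and finally subtract the finitely many lattice points in $U$, each contributing at most $N^2$. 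The final ``clean writeup'' paragraph is essentially the paper's proof verbatim, and the obstacle you flag is the real one.

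Where you have a gap is precisely the step $\phi(0)=\sum_{m\in\mathbb{Z}^2}\widehat\phi(m)\geqslant c\operatorname{area}(\Omega)$. With your tent kernel $\widehat\phi=\chi_{\frac12\Omega}\ast\chi_{\frac12\Omega}/\operatorname{area}(\tfrac12\Omega)$, the quantity $\phi(0)$ equals the average of the periodic function $x\mapsto\operatorname{card}\bigl((\tfrac12\Omega-x)\cap\mathbb{Z}^2\bigr)$ taken over $\tfrac12\Omega$ rather than over a fundamental domain of $\mathbb{Z}^2$, and for eccentric or tilted $\Omega$ this average need not be comparable to $\operatorname{area}(\Omega)$. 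Your suggested repairs (affine change of variables, tiling by boxes) are plausible but not carried out, and they are not what the paper does. The paper sidesteps the Riemann-sum issue entirely by a translation device combined with a \emph{discrete} autocorrelation: since
\[
\int_{\mathbb{T}^2}\operatorname{card}\bigl((\tfrac12\Omega-x)\cap\mathbb{Z}^2\bigr)\,dx=\operatorname{area}(\Omega)/4,
\]
one may pick $\overline x$ with $\operatorname{card}\bigl((\tfrac12\Omega-\overline x)\cap\mathbb{Z}^2\bigr)\geqslant\operatorname{area}(\Omega)/4$, and then take
\[
T(x)=\frac{1}{\operatorname{card}\bigl((\tfrac12\Omega-\overline x)\cap\mathbb{Z}^2\bigr)}\Bigl|\sum_{m\in(\frac12\Omega-\overline x)\cap\mathbb{Z}^2}e^{2\pi i m\cdot x}\Bigr|^2.
\]
Symmetry and convexity of $\Omega$ again ensure $\operatorname{supp}\widehat T\subset\Omega$ (the difference of two shifted lattice points lands in $\tfrac12\Omega+\tfrac12\Omega=\Omega$), $\widehat T(0)=1$ gives $\widehat T\leqslant 1$, and $T(0)$ is \emph{by construction} the lattice count, hence $\geqslant\operatorname{area}(\Omega)/4$ with the sharp constant. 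This replaces your continuous Fej\'er kernel with a discrete one anchored at the right translate, and it is both cleaner and uniform in $\Omega$ with no case analysis for thin bodies. You should adopt this device to close the gap.
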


\begin{proof}
Since%
\begin{align*}
\int_{\mathbb{T}^{2}}\operatorname{card}((\frac{1}{2}{\Omega}-x)\cap
\mathbb{Z}^{2})dx  &  =\int_{\mathbb{T}^{2}}\sum_{k\in\mathbb{Z}^{2}}%
\chi_{\frac{1}{2}\Omega}\left(  x+k\right)  )dx\\
&  =\int_{\mathbb{R}^{2}}\chi_{\frac{1}{2}\Omega}\left(  x\right)
dx=\operatorname{area}({\Omega})/4
\end{align*}
we can find $\overline{x}\in\left[  -\frac{1}{2},\frac{1}{2}\right)  ^{2}$
such that $\operatorname{card}((\frac{1}{2}${$\Omega$}$-\overline{x}%
)\cap\mathbb{Z}^{2})\geqslant\operatorname{area}(${$\Omega$}$)/4$. Let%
\begin{align*}
T(x)  &  =\frac{1}{\operatorname{card}((\frac{1}{2}{\Omega}-\overline{x}%
)\cap\mathbb{Z}^{2})}\left\vert \sum_{m\in\left(  \frac{1}{2}\Omega
-\overline{x}\right)  \cap\mathbb{Z}^{2}}e^{2\pi im\cdot x}\right\vert ^{2}\\
&  =\frac{1}{\operatorname{card}((\frac{1}{2}{\Omega}-\overline{x}%
)\cap\mathbb{Z}^{2})}\sum_{m,k\in\left(  \frac{1}{2}\Omega-\overline
{x}\right)  \cap\mathbb{Z}^{2}}e^{2\pi i(m-k)\cdot x}.
\end{align*}
Clearly $T$ is a non-negative trigonometric polynomial $T$. Observe that
$\widehat{T}$ is non-negative and that the support of $\widehat{T}$ is
contained in {$\Omega$} since $m,k\in\left(  \frac{1}{2}\Omega-\overline
{x}\right)  \cap\mathbb{Z}^{2}$ yields $m-k=\Omega$. Also observe that
\[
T(0)=\operatorname{card}((\frac{1}{2}{\Omega}-\overline{x})\cap\mathbb{Z}%
^{2})\geqslant\operatorname{area}({\Omega})/4.
\]
Since%
\[
0\leqslant\widehat{T}\left(  m\right)  \leqslant\widehat{T}\left(  0\right)
=\int_{\mathbb{R}^{2}}T\left(  x\right)  dx=1,
\]
it follows that
\begin{align*}
\sum_{m\in\Omega\cap\mathbb{Z}^{2}}\left\vert \sum_{j=1}^{N}e^{2\pi im\cdot
u(j)}\right\vert ^{2}  &  \geqslant\sum_{m\in\Omega\cap\mathbb{Z}^{2}}%
\widehat{T}(m)\left\vert \sum_{j=1}^{N}e^{2\pi im\cdot u(j)}\right\vert ^{2}\\
&  =\sum_{j=1}^{N}\sum_{k=1}^{N}\sum_{m\in\Omega\cap\mathbb{Z}^{2}}\widehat
{T}(m)e^{2\pi im\cdot(u(j)-u(k))}\\
&  =\sum_{j=1}^{N}\sum_{k=1}^{N}T(u(j)-u(k))\\
&  \geq NT(0)\geq N\operatorname{area}({\Omega})/4.
\end{align*}
Finally,%
\begin{align*}
\sum_{m\in\left(  \Omega\setminus U\right)  \cap\mathbb{Z}^{2}}\left\vert
\sum_{j=1}^{N}e^{2\pi im\cdot u(j)}\right\vert ^{2}  &  =\sum_{m\in\Omega
\cap\mathbb{Z}^{2}}\left\vert \sum_{j=1}^{N}e^{2\pi im\cdot u(j)}\right\vert
^{2}-\sum_{m\in U\cap\mathbb{Z}^{2}}\left\vert \sum_{j=1}^{N}e^{2\pi im\cdot
u(j)}\right\vert ^{2}\\
&  \geqslant N\operatorname{area}({\Omega})/4-\sum_{m\in U\cap\mathbb{Z}^{2}%
}N^{2}\geqslant N\operatorname{area}({\Omega})/4-cN^{2}.
\end{align*}

\end{proof}

\begin{proof}
[Proof of Theorem \ref{Theorem B-a}]Without loss of generality we can assume
$I=\left(  -\alpha,\alpha\right)  $. Since%
\[
\int_{1/2}^{1}\left\vert \widehat{\chi}_{C}\left(  \tau\rho\Theta\right)
\right\vert ^{2}\ d\tau=\int_{1/2}^{1}\left\vert \widehat{\chi}_{C}\left(
-\tau\rho\Theta\right)  \right\vert ^{2}\ d\tau,
\]
Theorem \ref{StimaDaSotto} yields, for $\rho$ large enough,%
\begin{align}
&  \int_{1/2}^{1}\left\vert \widehat{\chi}_{C}\left(  \tau\rho\left(
\cos\theta,\sin\theta\right)  \right)  \right\vert ^{2}\ d\tau\nonumber\\
&  \geq\left\{
\begin{array}
[c]{lll}%
c_{0}\rho^{-3} &  & \text{if }-\alpha<\theta<\alpha\text{ or }-\alpha
<\pi-\theta<\alpha,\\
c_{0}\rho^{-2-2\sigma} &  & \text{otherwise.}%
\end{array}
\right.  \label{c0}%
\end{align}
Let $N$ be a large positive integer. For a positive constant $\kappa$ to be
chosen later we consider the following geometric construction. Let $R_{0}$ be
the rectangle having vertices $\left(  \pm X/2,\pm Y/2\right)  $ satisfying
\[
XY=\kappa N,\hspace{1cm}X\gg Y.
\]
Let%
\[
\psi=Y/X,\hspace{1cm}M=\left[  \frac{\alpha}{\psi}\right]  =\left[
\alpha\frac{X}{Y}\right]  \ ,
\]
(here $\left[  x\right]  $ denotes the integer part of $x$). For every
$-M\leqslant j\leqslant M$ we consider the rotated rectangles $R_{j}%
:=r_{j\psi}R_{0}$, where $r_{\beta}$ is the rotation by angle $\beta$ about
the origin. See Figure \ref{chord9}. For every $m=\left(  m_{1},m_{2}\right)
\in\mathbb{Z}^{2}$ let%
\[
\Phi\left(  m\right)  =\sum_{j=-M}^{M}\chi_{R_{j}}\left(  m\right)
\]
\begin{figure}[h]
\begin{center}
\includegraphics[width=95mm]{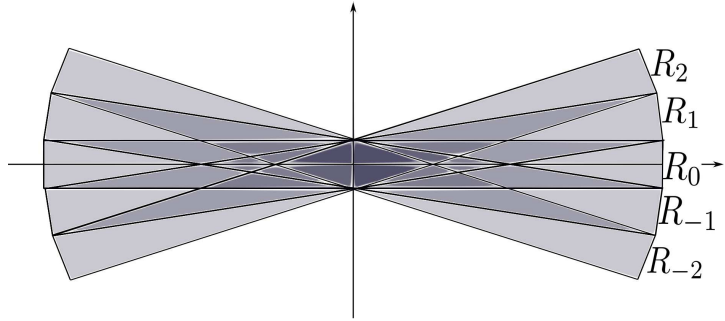}
\end{center}
\caption{A nonnegative linear combination of characteristic functions of
rotated rectangles gives a function which is smaller than (the average of)
$\widehat{\chi}_{C}$.}%
\label{chord9}%
\end{figure}We want to find a constant $\Gamma=\Gamma\left(  X\right)  $ such
that, for every $m\in\mathbb{Z}^{2}\setminus\left\{  0\right\}  $,%
\begin{equation}
\Gamma\Phi\left(  m\right)  \leqslant\left\{
\begin{array}
[c]{lll}%
c_{0}\left\vert m\right\vert ^{-3} &  & \text{if }-\alpha<\arg\left(  \pm
m\right)  <\alpha\text{,}\\
c_{0}\left\vert m\right\vert ^{-2-2\sigma} &  & \text{otherwise.}%
\end{array}
\right.  \label{dash dash}%
\end{equation}
(here $c_{0}$ is the same constant that appears in (\ref{c0})). First we
assume $m\in\cup R_{j}$ and $\left\vert m\right\vert \geqslant Y$. In this
case we have%
\begin{align*}
\Phi\left(  m\right)   &  =\sum_{j=-M}^{M}\chi_{R_{j}}\left(  m\right)
=\mathrm{\operatorname{card}}\left\{  j\in\mathbb{Z}:\ -M\leqslant j\leqslant
M\ ,\ \ m\in r_{j\psi}R_{0}\right\} \\
&  =\mathrm{\operatorname{card}}\left\{  j\in\mathbb{Z}:-M\leqslant j\leqslant
M\ ,\ r_{-j\psi}m\in R_{0}\right\}  \leqslant c\frac{X}{\left\vert
m\right\vert }\ .
\end{align*}
Indeed, the points $r_{-j\psi}m$ belong to a circle $K_{m}$ of radius
$\left\vert m\right\vert $, the lenght of $K_{m}\cap R_{0}$ is $\approx Y$ and
these points are spaced by $\psi\left\vert m\right\vert =\frac{Y}{X}\left\vert
m\right\vert $. Then the inequality%
\[
\Gamma\Phi\left(  m\right)  \leqslant c_{0}\left\vert m\right\vert ^{-3}%
\]
is a consequence of%
\[
\Gamma\frac{X}{\left\vert m\right\vert }\leqslant c\left\vert m\right\vert
^{-3},
\]
that is%
\begin{equation}
\Gamma\leqslant\inf_{m\in\cup R_{j}\text{, }\left\vert m\right\vert \geqslant
Y}\frac{c}{X\left\vert m\right\vert ^{2}}\leqslant\frac{c}{X^{3}}\ .
\label{prima dis}%
\end{equation}
We now assume $0<\left\vert m\right\vert \leq Y$, regardless of $\arg\left(
\pm m\right)  $. Then%
\[
\Phi\left(  m\right)  \leqslant2M+1=2\left[  \frac{\alpha X}{Y}\right]
+1\leqslant c\frac{\alpha X}{Y}\ .
\]
Since we want%
\[
\Gamma\Phi\left(  m\right)  \leqslant c\left\vert m\right\vert ^{-2-2\sigma},
\]
it suffices%
\[
\Gamma\frac{\alpha X}{Y}\leqslant c\left\vert m\right\vert ^{-2-2\sigma}\ .
\]
Therefore we need
\begin{equation}
\Gamma\leqslant\inf_{0<\left\vert m\right\vert \leq Y}\frac{cY}{\alpha
X\left\vert m\right\vert ^{2+2\sigma}}=\frac{c}{\alpha XY^{1+2\sigma}}\ .
\label{seconda dis}%
\end{equation}
Then by (\ref{prima dis}) and (\ref{seconda dis}) we set
\[
\Gamma=c\min\left(  \frac{1}{X^{3}},\frac{1}{\alpha XY^{1+2\sigma}}\right)
\ .
\]
We choose $\frac{1}{X^{3}}=\frac{1}{\alpha XY^{1+2\sigma}}$ and since
$XY=\kappa N$ we obtain%
\begin{align*}
X  &  =c\left(  \alpha,\kappa\right)  N^{\frac{2\sigma+1}{2\sigma+3}},\\
Y  &  =c\left(  \alpha,\kappa\right)  N^{\frac{2}{2\sigma+3}}.
\end{align*}
Then%
\[
\Gamma=c\left(  \alpha,\kappa\right)  N^{-3\left(  2\sigma+1\right)  /\left(
2\sigma+3\right)  }%
\]
yields (\ref{dash dash}). Our construction guarantees that for $\left\vert
m\right\vert \geqslant c_{1}$, we have%
\[
\int_{1/2}^{1}\left\vert \widehat{\chi}_{\tau C}\left(  m\right)  \right\vert
^{2}d\tau\geqslant\Gamma\Phi\left(  m\right)  =\Gamma\sum_{j=-M}^{M}%
\chi_{R_{j}}\left(  m\right)  .
\]
We recall that the periodic function%
\[
t\mapsto\operatorname*{card}\left(  \mathcal{P}_{N}\cap\left(  \tau
C+t\right)  \right)  -\tau^{2}N\left\vert C\right\vert
\]
has Fourier series%
\[
\sum_{m\neq0}\left(  \sum_{j=1}^{N}e^{2\pi im\cdot u\left(  j\right)
}\right)  \widehat{\chi}_{\tau C}\left(  m\right)  e^{2\pi im\cdot t}%
\]
(see \cite[p. 205]{BGT}). Then, by Parseval theorem and Lemma
\ref{lemmaCassels}, we obtain%
\begin{align*}
&  \int_{1/2}^{1}\int_{\mathbb{T}^{2}}\left\vert \operatorname*{card}\left(
\mathcal{P}_{N}\cap\left(  \tau C+t\right)  \right)  -\tau^{2}N\left\vert
C\right\vert \right\vert ^{2}dtd\tau\\
&  =\sum_{m\neq0}\left\vert \sum_{j=1}^{N}e^{2\pi im\cdot u\left(  j\right)
}\right\vert ^{2}\int_{1/2}^{1}\left\vert \widehat{\chi}_{\tau C}\left(
m\right)  \right\vert ^{2}d\tau\\
&  \geqslant\sum_{\left\vert m\right\vert \geqslant c_{1}}\left\vert
\sum_{j=1}^{N}e^{2\pi im\cdot u\left(  j\right)  }\right\vert ^{2}\Gamma
\sum_{j=-M}^{M}\chi_{R_{j}}\left(  m\right) \\
&  =\Gamma\sum_{j=-M}^{M}\sum_{\substack{\left\vert m\right\vert \geqslant
c_{1}\\m\in R_{j}}}\left\vert \sum_{j=1}^{N}e^{2\pi im\cdot u\left(  j\right)
}\right\vert ^{2}\geqslant\Gamma\sum_{j=-M}^{M}\left(  \left(
N\operatorname{area}({R}_{j})/4-cN^{2}\right)  \right) \\
&  \geqslant\Gamma\left(  2M+1\right)  \left(  \kappa N^{2}-cN^{2}\right)  .
\end{align*}
Choosing $\kappa$ large enough gives%
\begin{align*}
&  \int_{1/2}^{1}\int_{\mathbb{T}^{2}}\left\vert \operatorname*{card}\left(
\mathcal{P\cap}\left(  \tau C+t\right)  \right)  -\tau^{2}N\left\vert
C\right\vert \right\vert ^{2}dtd\tau\\
&  \geqslant c\left(  \alpha,\kappa\right)  N^{-3\left(  2\sigma+1\right)
/\left(  2\sigma+3\right)  }N^{\frac{2\sigma-1}{2\sigma+3}}N^{2}\\
&  =c\left(  \alpha,\kappa\right)  N^{\frac{2}{2\sigma+3}}.
\end{align*}

\end{proof}

\bigskip

We now begin the proof of Theorem \ref{Theorem B-b}. Let $\frac{1}{2}%
\leqslant\sigma<1$ (the case $\sigma=1$ will be addressed later) and let
$C_{\sigma}$ be a convex planar body, symmetric about the axes, such that in a
neighborhood of the point $\left(  0,-1\right)  $ the boundary $\partial
C_{\sigma}$ coincides with the graph of the function $y=\left\vert
x\right\vert ^{1/\sigma}-1$ (say for $-\varepsilon\leqslant x\leqslant
\varepsilon$). We also assume that $\partial C_{\sigma}$ is $\mathcal{C}^{2}$
and has positive curvature except at the points $\left(  0,-1\right)  $ and
$\left(  0,1\right)  $.

In the next proposition we estimate the lengths of the chords of $C_{\sigma}$.
Using the symmetry of $C_{\sigma}$ we can restrict the directions of the
inward unit normals $\Theta=\left(  \cos\theta,\sin\theta\right)  $ to the
interval $\pi/2\leqslant\theta\leqslant\pi$.

\begin{proposition}
\label{Lemma C sigma}Let $C_{\sigma}$ as above. Let $\Theta=\left(  \cos
\theta,\sin\theta\right)  $ with $\pi/2\leqslant\theta\leqslant\pi$. Then
there exists $\delta_{0},c>0$ such that, for $0<\delta\leqslant\delta_{0}$,
the chords of $C_{\sigma}$ satisfy%
\begin{equation}
\left\vert \gamma_{\Theta}\left(  \delta\right)  \right\vert \approx\left\{
\begin{array}
[c]{ll}%
\delta^{\sigma} & \text{for }0\leqslant\theta-\frac{\pi}{2}<c\,\delta
^{1-\sigma},\\
\delta^{1/2}\left(  \theta-\frac{\pi}{2}\right)  ^{\frac{2\sigma-1}{2\left(
1-\sigma\right)  }} & \text{for }c\,\delta^{1-\sigma}<\theta-\frac{\pi}{2}.
\end{array}
\right.  \label{corda C_gamma}%
\end{equation}

\end{proposition}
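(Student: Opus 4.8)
The plan is to express $|\gamma_\Theta(\delta)|$ as (essentially) the gap between the two solutions of a one–variable equation, and then to solve that equation separately in a ``shallow'' and a ``deep'' regime, the transition between them occurring exactly at $\theta-\pi/2\sim\delta^{1-\sigma}$.

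\textit{Reduction to a scalar equation.} Write $\theta=\pi/2+\phi$, $\phi\in[0,\pi/2]$. By the symmetry of $C_\sigma$ and Definition \ref{firstdef}, $\gamma_\Theta(\delta)$ lies on the line $\{x:x\cdot\Theta=\inf_{y\in C_\sigma}(y\cdot\Theta)+\delta\}$, which is parallel to the tangent at the point $P(\theta)\in\partial C_\sigma$ with inward normal $\Theta$. For $\phi$ bounded away from $0$, $P(\theta)$ sits on the $\mathcal C^2$ arc of strictly positive curvature and the elementary parabola comparison gives $|\gamma_\Theta(\delta)|\approx\delta^{1/2}$, which is exactly what \eqref{corda C_gamma} asserts there; so assume $\phi$ small, $P(\theta)=(x_0,x_0^{1/\sigma}-1)$ with $x_0>0$ on the arc $y=|x|^{1/\sigma}-1$. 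The normal condition $(1/\sigma)x_0^{1/\sigma-1}=\tan\phi$ gives $x_0=x_0(\phi)=(\sigma\tan\phi)^{\sigma/(1-\sigma)}\approx\phi^{\sigma/(1-\sigma)}$. The endpoints of $\gamma_\Theta(\delta)$ are the points $(x,|x|^{1/\sigma}-1)$ with $x=x_1$ and $x=x_2$, $x_1<x_0<x_2$, on that line; dividing the line's equation by $\sin\theta=\cos\phi$ turns it into $F(x)=F(x_0)+\delta/\cos\phi$, where $F(x)=|x|^{1/\sigma}-x\tan\phi$. Since both endpoints lie on a line of slope $\tan\phi$, $|\gamma_\Theta(\delta)|=(x_2-x_1)/\cos\phi\approx x_2-x_1$. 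Here $F$ is smooth and strictly convex on $\{x>0\}$ with minimum at $x_0$ and $F(x_0)=-(1/\sigma-1)x_0^{1/\sigma}$, while $F(0)=0$ and $F$ is decreasing and convex on $\{x<0\}$; moreover $F''(x)=(1/\sigma)(1/\sigma-1)x^{1/\sigma-2}\approx x_0^{1/\sigma-2}$ uniformly for $x\in[x_0/2,2x_0]$. Taking $\delta_0$ small independently of $\theta$, one checks that the cap $\{q\in C_\sigma:q\cdot\Theta\le P(\theta)\cdot\Theta+\delta\}$ stays inside the coordinate patch $|x|\le\varepsilon$, so these two roots really are the chord's endpoints.

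\textit{The two regimes.} Fix a large constant $K$. If $\delta\le K^{-1}x_0^{1/\sigma}$ (the shallow case, equivalent to $\phi\gtrsim\delta^{1-\sigma}$), then both roots lie in $[x_0/2,2x_0]$, and integrating the two–sided bound $F''(x)\approx x_0^{1/\sigma-2}$ gives $F(x)-F(x_0)\approx x_0^{1/\sigma-2}(x-x_0)^2$ there, hence $x_2-x_0\approx x_0-x_1\approx\delta^{1/2}x_0^{1-1/(2\sigma)}$, so
\[
|\gamma_\Theta(\delta)|\approx\delta^{1/2}x_0^{1-1/(2\sigma)}\approx\delta^{1/2}\phi^{(2\sigma-1)/(2(1-\sigma))},
\]
the second line of \eqref{corda C_gamma}. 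If instead $\delta\ge Kx_0^{1/\sigma}$ (the deep case, equivalent to $\phi\lesssim\delta^{1-\sigma}$), then $|F(x_0)|=(1/\sigma-1)x_0^{1/\sigma}\le(1/\sigma-1)K^{-1}\delta$ is negligible against $\delta$, so $F(x_1),F(x_2)\in[\delta/2,\delta]$; comparing $F$ with the pure power $|x|^{1/\sigma}$ at $x=\pm\tfrac12\delta^\sigma$ and $x=\pm2\delta^\sigma$ — the linear term $x\tan\phi=O(\phi\delta^\sigma)$ being $\ll\delta$, and $1/\sigma>1$ separating the factors $2^{\pm1/\sigma}$ from $1$ — forces $x_2\approx\delta^\sigma$ and $x_1\approx-\delta^\sigma$, whence $|\gamma_\Theta(\delta)|\approx x_2-x_1\approx\delta^\sigma$, the first line of \eqref{corda C_gamma}.

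\textit{Matching, and the main obstacle.} The two cases overlap when $\delta\approx x_0^{1/\sigma}$, i.e.\ $\phi\approx\delta^{1-\sigma}$, and there both formulas evaluate to $\approx\delta^\sigma\approx\delta^{1/2}\phi^{(2\sigma-1)/(2(1-\sigma))}$; choosing the constant $c$ in the statement inside this overlap makes the dichotomy in \eqref{corda C_gamma} exact. The only genuine difficulty is bookkeeping the two–sided ($\approx$) estimates \emph{uniformly in} $\phi$ across the transition $\phi\sim\delta^{1-\sigma}$: in the shallow case the Taylor remainder of $F$ must be controlled by a power of $x_0$ over an interval of length $\sim x_0$, which is where the blow-up of the curvature as $\phi\to0^{+}$ (present when $\sigma>1/2$, since then $1/\sigma-2<0$) has to be absorbed through the comparability $F''(x)\approx x_0^{1/\sigma-2}$ on $[x_0/2,2x_0]$ rather than through a uniform curvature bound; in the deep case one must verify that the linear term $x\tan\phi$ is negligible; and the uniform localization of the cap inside $|x|\le\varepsilon$, independently of $\theta$, needs a short separate argument.
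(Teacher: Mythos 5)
Your reduction to a scalar equation is exactly the paper's reduction: both you and the paper study the two roots of $|x|^{1/\sigma}-x_0^{1/\sigma}-\tfrac{1}{\sigma}x_0^{-1+1/\sigma}(x-x_0)=y$, and then split into a regime where the roots are comparable to $x_0$ and a regime where they are comparable to $y^{\sigma}$. The paper normalizes by the substitution $x\mapsto x_0(1+u)$ so that everything reduces to the single fixed function $g(u)=(1+u)^{1/\sigma}-1-u/\sigma$ (analyzed via the integral form of Taylor's remainder), whereas you keep $x_0$ in the picture and argue directly through $F''(x)\approx x_0^{1/\sigma-2}$ on $[x_0/2,2x_0]$ in the shallow case and a $|x|^{1/\sigma}$ comparison in the deep case. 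These are the same estimates in different coordinates, so I would not call the routes genuinely different.

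There is, however, a genuine gap in your argument. You fix $K$ large, handle $\delta\leqslant K^{-1}x_0^{1/\sigma}$ (shallow) and $\delta\geqslant Kx_0^{1/\sigma}$ (deep), and then assert that ``the two cases overlap when $\delta\approx x_0^{1/\sigma}$.'' They do not overlap: with $K$ large the region $K^{-1}x_0^{1/\sigma}<\delta<Kx_0^{1/\sigma}$ — equivalently $\phi\approx\delta^{1-\sigma}$, the very transition zone whose uniformity you single out as ``the main obstacle'' — is left untreated, and it cannot be dismissed by choosing the constant $c$ in \eqref{corda C_gamma}, because $c$ only determines where the dichotomy of the statement switches, not whether you have actually proved an estimate for those $(\theta,\delta)$. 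And you cannot simply take $K=1$: your shallow-case localisation ``both roots lie in $[x_0/2,2x_0]$'' genuinely needs $K$ large (by convexity, $F(x_0/2)-F(x_0)$ and $F(2x_0)-F(x_0)$ are only $\gtrsim x_0^{1/\sigma}$ with an implicit constant $<1$, so you must take $\delta$ a small multiple of $x_0^{1/\sigma}$ to be sure the level $F(x_0)+\delta/\cos\phi$ is crossed inside that window), and your deep-case argument needs $K$ large to make the linear term $x\tan\phi$ and the offset $|F(x_0)|$ genuinely negligible against $\delta$. So the middle region requires its own argument — for instance, showing $x_2<Cx_0$ and $x_1>-Cx_0$ for a large $C$ by evaluating $F(\pm Cx_0)$, together with a lower bound $x_2-x_1\gtrsim x_0$ — and until you supply it, the two-sided bound in \eqref{corda C_gamma} is not established uniformly in $\theta$. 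The paper's rescaling to $g$ avoids this issue automatically, since the dichotomy in Lemma~\ref{Lemma g} ($|\overline{x}|\approx y^{1/2}$ for $y\leqslant1$, $|\overline{x}|\approx y^{\sigma}$ for $y>1$) has no excluded middle.
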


Observe that $\tan\left(  \theta-\frac{\pi}{2}\right)  $ is the slope of the
tangent line at the point of $\partial C_{\sigma}$ where the inward unit
normal is $\Theta$.

The notation $A\left(  x\right)  \approx B(x)$ means that there exist
constants $c_{1}$and $c_{2}$ depending on $\sigma$, such that%
\[
c_{1}A\left(  x\right)  \leqslant B\left(  x\right)  \leqslant c_{2}A\left(
x\right)  .
\]

\begin{proof}
Let us fix $c_{2}>0$ small. Let $\delta_{0}$ be small enough and $\theta
-\frac{\pi}{2}>c_{2}$. Then to the chord $\gamma_{\Theta}\left(
\delta\right)  $ is associated a small arc in $\partial C_{\sigma}$ that does
not contain the origin. Since the curvature in this arc is away from $0$ we
have $\left\vert \gamma_{\Theta}\left(  \delta\right)  \right\vert
\approx\delta^{1/2}$.

Let now $\theta-\frac{\pi}{2}\leqslant c_{2}$. If we assume $c_{2}$ and
$\delta_{0}$ small enough, then the arc associated to the chord $\gamma
_{\Theta}\left(  \delta\right)  $ is in the part of $\partial C_{\sigma}$ that
coincides with the graph of $y=\left\vert x\right\vert ^{1/\sigma}-1$.

Let $\left(  x_{0},x_{0}^{1/\sigma}-1\right)  $ the point of $\partial
C_{\sigma}$ where the tangent has slope $\tan\left(  \theta-\frac{\pi}%
{2}\right)  $. Then $\gamma_{\Theta}\left(  \delta\right)  $ concides with the
intersection of $C_{\sigma}$ with the line
\[
y=x_{0}^{1/\sigma}-1+\frac{1}{\sigma}x_{0}^{-1+1/\sigma}\left(  x-x_{0}%
\right)  +\frac{\delta}{\sin\left(  \theta\right)  }.
\]
In the following lemmas we will estimate $\left\vert \gamma_{\Theta}\left(
\delta\right)  \right\vert $ by showing that the two solutions $x_{1},x_{2}$
of the equation
\[
\left\vert x\right\vert ^{1/\sigma}-x_{0}^{1/\sigma}-\frac{1}{\sigma}%
x_{0}^{-1+1/\sigma}\left(  x-x_{0}\right)  =\frac{\delta}{\sin\left(
\theta\right)  }%
\]
satisfy%
\[
x_{2}-x_{1}\approx\left\{
\begin{array}
[c]{ll}%
\delta^{1/2}x_{0}^{\frac{2\sigma-1}{2\sigma}} & \text{for }0<\delta
<c\,x_{0}^{1/\sigma},\\
\delta^{\sigma} & \text{for }c\,x_{0}^{1/\sigma}<\delta
\end{array}
\right.
\]
(observe that $\sin\left(  \theta\right)  \approx1$ so that we can replace
$\frac{\delta}{\sin\left(  \theta\right)  }$ with $\delta$). Since
$\tan\left(  \theta-\frac{\pi}{2}\right)  =\frac{1}{\sigma}x_{0}^{-1+1/\sigma
}$, this implies (\ref{corda C_gamma}).
\end{proof}

\begin{lemma}
\label{Lemma 1}Let $1/2<\sigma<1$ and for $x\geqslant-1$ let $g\left(
x\right)  =\left(  1+x\right)  ^{1/\sigma}-1-\frac{1}{\sigma}x$. Then%
\[
g\left(  x\right)  \approx\left\{
\begin{array}
[c]{ll}%
x^{2} & \text{for }\left\vert x\right\vert \leqslant1,\\
x^{1/\sigma} & \text{for }x>1.
\end{array}
\right.
\]

\end{lemma}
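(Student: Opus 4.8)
The plan is to write $p=1/\sigma$, so that $1<p<2$, and to analyze $g(x)=(1+x)^{p}-1-px$ through its convexity. First I would record the elementary identities $g(0)=0$, $g'(x)=p\bigl((1+x)^{p-1}-1\bigr)$ (hence $g'(0)=0$), and $g''(x)=p(p-1)(1+x)^{p-2}>0$ for $x>-1$. Thus $g$ is strictly convex on $(-1,\infty)$ with a unique minimum at $x=0$, where it vanishes; in particular $g(x)>0$ for every $x\in[-1,\infty)\setminus\{0\}$, and $g(-1)=p-1>0$. Since $p>1$, $g$ also extends continuously to $x=-1$.

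For $|x|\leqslant1$ I would introduce the ratio $h(x)=g(x)/x^{2}$ for $x\neq0$ and set $h(0)=g''(0)/2=p(p-1)/2$. The second order Taylor expansion of $g$ at the origin shows that $h$ is continuous at $0$, and continuity of $g$ gives continuity of $h$ at $x=-1$ (with $h(-1)=p-1$); hence $h$ is continuous on the compact interval $[-1,1]$. Moreover $h$ is strictly positive there, by the positivity of $g$ away from the origin together with $h(0)=p(p-1)/2>0$. A continuous strictly positive function on a compact interval is bounded above and below by positive constants, which is precisely $g(x)\approx x^{2}$ for $|x|\leqslant1$, with constants depending only on $\sigma$. \emph{If one prefers explicit constants}, for $0<x\leqslant1$ one may instead write $g(x)=p(p-1)\int_{0}^{x}(x-t)(1+t)^{p-2}\,dt$ and use $(1+t)^{p-2}\in[2^{p-2},1]$ to compare with $\int_{0}^{x}(x-t)\,dt=x^{2}/2$.

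For $x>1$ I would run the analogous argument with the competitor $x^{p}$. The function $\psi(x)=g(x)/x^{p}=(1+1/x)^{p}-x^{-p}-px^{1-p}$ is continuous and strictly positive on $[1,\infty)$, and, since $p>1$, one has $\psi(x)\to1$ as $x\to+\infty$. Hence there is $X\geqslant1$ with $\psi(x)\geqslant1/2$ for $x\geqslant X$, while on the compact interval $[1,X]$ continuity and positivity give $0<\min_{[1,X]}\psi\leqslant\psi\leqslant\max_{[1,X]}\psi<\infty$; altogether $\psi$ is bounded between two positive constants on $[1,\infty)$. This yields $g(x)\approx x^{p}=x^{1/\sigma}$ for $x>1$, completing the proof.

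The only subtle points are the boundary behaviours of these ratios — the continuity and positivity of $g$ at $x=-1$, and the limit $\psi(x)\to1$ as $x\to+\infty$ — and both are immediate from the restriction $1<p<2$ (equivalently $1/2<\sigma<1$). Once these are in place the two compactness arguments carry the whole statement, so I do not anticipate any genuine difficulty.
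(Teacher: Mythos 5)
Your proposal is correct, and it takes a genuinely different route from the paper's. The paper writes $g$ in the integral form of the Taylor remainder,
\[
g(x)=\frac{1}{\sigma}\Bigl(\frac{1}{\sigma}-1\Bigr)\int_{0}^{x}(1+t)^{-2+1/\sigma}(x-t)\,dt,
\]
and then estimates this integral directly, with a case split at $x=0$, $x=-1/2$ and $x=1$, splitting $\int_0^{x}$ into $\int_0^{x/2}+\int_{x/2}^{x}$ in the last range. You instead form the ratio functions $h=g/x^2$ on $[-1,1]$ and $\psi=g/x^{p}$ on $[1,\infty)$ and argue by continuity, positivity, and compactness (plus the limit $\psi\to1$). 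Your route avoids the explicit integral bookkeeping and, in particular, handles the awkward subinterval $-1\leqslant x<-1/2$ in one stroke (the paper also has to appeal to a compactness-type remark there, calling the estimate ``trivial since $g$ is positive and bounded away from $0$''); the trade-off, which you yourself note, is that it produces no explicit constants, whereas the paper's integral computation does, via $(1+t)^{-2+1/\sigma}\in[2^{-2+1/\sigma},1]$ on $[0,1]$. Since the definition of $\approx$ in the paper only requires the existence of constants depending on $\sigma$, and the subsequent Lemma about solving $g(x)=y$ only uses that existence, either argument serves the later proofs equally well.

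One small tightening you should make: when you pass to $x\geqslant X$ you only record the lower bound $\psi\geqslant1/2$, but the conclusion ``bounded between two positive constants on $[1,\infty)$'' also needs an upper bound on the tail. Of course $\psi(x)\to1$ gives, after possibly enlarging $X$, also $\psi(x)\leqslant3/2$ for $x\geqslant X$; just say so explicitly.
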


\begin{proof}
Using the integral form of reminder in Taylor's formula we can write%
\[
g\left(  x\right)  =\left(  1+x\right)  ^{1/\sigma}-1-\frac{1}{\sigma}%
x=\frac{1}{\sigma}\left(  \frac{1}{\sigma}-1\right)  \int_{0}^{x}\left(
1+t\right)  ^{-2+1/\sigma}\left(  x-t\right)  dt.
\]
Let $0\leqslant x\leqslant1$, then%
\[
\int_{0}^{x}\left(  1+t\right)  ^{-2+1/\sigma}\left(  x-t\right)
dt\approx\int_{0}^{x}\left(  x-t\right)  dt=\frac{1}{2}x^{2}.
\]
Similarly, if $-\frac{1}{2}\leqslant x\leqslant0$, we have%
\[
\int_{0}^{x}\left(  1+t\right)  ^{-2+1/\sigma}\left(  x-t\right)
dt\approx\int_{x}^{0}\left(  t-x\right)  dt=\frac{1}{2}x^{2}.
\]
The estimate
\[
g\left(  x\right)  \approx x^{2}%
\]
for $-1\leqslant x<-\frac{1}{2}$ is trivial since $g\left(  x\right)  $ is
positive and bounded away from $0$ in this interval. Let now $x>1$. Since
$-1+\frac{1}{\sigma}>0$ we have%
\begin{align*}
\int_{0}^{x/2}\left(  1+t\right)  ^{-2+1/\sigma}\left(  x-t\right)  dt  &
\approx x\int_{0}^{x/2}\left(  1+t\right)  ^{-2+1/\sigma}dt=x\left[
\frac{\left(  1+t\right)  ^{-1+1/\sigma}}{-1+1/\sigma}\right]  _{0}^{x/2}\\
&  =\frac{x}{-1+1/\sigma}\left[  \left(  1+\frac{x}{2}\right)  ^{-1+1/\sigma
}-1\right]  \approx x^{1/\sigma}%
\end{align*}
and%
\[
\int_{x/2}^{x}\left(  1+t\right)  ^{-2+1/\sigma}\left(  x-t\right)  dt\approx
x^{-2+1/\sigma}\int_{x/2}^{x}\left(  x-t\right)  dt=\frac{1}{8}x^{2}%
x^{-2+1/\sigma}\approx x^{1/\sigma}%
\]
we obtain%
\[
\int_{0}^{x}\left(  1+t\right)  ^{-2+1/\sigma}\left(  x-t\right)  dx\approx
x^{1/\sigma}.
\]

\end{proof}

\begin{lemma}
\label{Lemma g}Let $1/2<\sigma<1$ and for $x\geqslant-1$ let $g\left(
x\right)  =\left(  1+x\right)  ^{1/\sigma}-1-\frac{1}{\sigma}x$. For every
$y>0$ the equation%
\[
g\left(  x\right)  =y
\]
has at most two solutions. One of them can be negative. If $\overline{x}$ is a
solution then%
\[
\left\vert \overline{x}\right\vert \approx\left\{
\begin{array}
[c]{ll}%
y^{1/2} & \text{for }0\leqslant y\leqslant1,\\
y^{\sigma} & \text{for }y>1.
\end{array}
\right.
\]

\end{lemma}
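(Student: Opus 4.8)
The plan is to first determine the shape of $g$ and then invert it regime by regime, quoting Lemma \ref{Lemma 1} for the quantitative bounds.

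The first step is to differentiate. Since $1/\sigma>1$ one has $g'(x)=\frac{1}{\sigma}\bigl[(1+x)^{1/\sigma-1}-1\bigr]$, and because $u\mapsto u^{1/\sigma-1}$ is increasing on $(0,\infty)$ this is negative on $(-1,0)$ and positive on $(0,\infty)$. Hence $g$ is strictly decreasing on $[-1,0]$ and strictly increasing on $[0,\infty)$, with $g(0)=0$ a strict global minimum and $g(-1)=\frac{1}{\sigma}-1=\frac{1-\sigma}{\sigma}\in(0,1)$. The counting assertion then follows at once: for $y>0$ the increasing branch on $[0,\infty)$ yields exactly one solution $x_+>0$, while the decreasing branch on $[-1,0)$ yields a solution $x_-\in[-1,0)$ precisely when $0<y\le\frac{1-\sigma}{\sigma}$ and none otherwise; so there are at most two solutions, one of which can be negative.

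For the size estimates I will use Lemma \ref{Lemma 1}, which supplies constants (depending on $\sigma$) with $g(x)\approx x^{2}$ for $|x|\le 1$ and $g(x)\approx x^{1/\sigma}$ for $x>1$. For the negative solution, which occurs only when $y\le\frac{1-\sigma}{\sigma}<1$, we have $|x_-|\le 1$, so $y=g(x_-)\approx x_-^{2}$ and $|x_-|\approx y^{1/2}$. For the positive solution, monotonicity of $g$ on $[0,\infty)$ makes the sign of $y-g(1)$ decide whether $x_+\le 1$ or $x_+>1$: in the first case $y\approx x_+^{2}$ gives $x_+\approx y^{1/2}$, in the second $y\approx x_+^{1/\sigma}$ gives $x_+\approx y^{\sigma}$.

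The only point needing care — and the main (mild) obstacle — is that the natural breakpoint for $x_+$ is $g(1)$ rather than $1$. Evaluating the two regimes of Lemma \ref{Lemma 1} at $x=1$ shows $g(1)\approx 1$ (in fact $0<g(1)<1$ for $\sigma\in(1/2,1)$, since $2^{1/\sigma}<2+1/\sigma$ there), so on the short interval $g(1)\le y\le 1$ all three quantities $x_+$, $y^{1/2}$, $y^{\sigma}$ are comparable to $1$. Absorbing this range into the ($\sigma$-dependent) constants, the stated equivalences $|\overline{x}|\approx y^{1/2}$ for $0<y\le 1$ and $|\overline{x}|\approx y^{\sigma}$ for $y>1$ follow, which completes the argument.
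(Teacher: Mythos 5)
Your proof is correct and takes essentially the same route as the paper: both invert the two-regime bounds $g(x)\approx x^2$ for $|x|\leqslant 1$ and $g(x)\approx x^{1/\sigma}$ for $x>1$ from Lemma \ref{Lemma 1} and read off $|\overline{x}|$ in terms of $y$. You are merely more explicit about the monotonicity of $g$ (which justifies the counting claim) and about absorbing the intermediate range $g(1)\leqslant y\leqslant 1$ into the $\sigma$-dependent constants, points the paper leaves implicit with the remark that it suffices to treat $y$ small and $y$ large.
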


\begin{proof}
Clearly it is enough to show that $\left\vert \overline{x}\right\vert \approx
y^{1/2}$ for $\left\vert y\right\vert $ small and $\left\vert \overline
{x}\right\vert \approx y^{\sigma}$ for $y$ large. By the previous lemma there
exist constants $c_{1},c_{2}>0$ such that%
\begin{equation}
c_{1}x^{2}\leqslant g\left(  x\right)  \leqslant c_{2}x^{2}~~~\text{for
}\left\vert x\right\vert \leqslant1 \label{<1}%
\end{equation}
and%
\begin{equation}
c_{1}x^{1/\sigma}\leqslant g\left(  x\right)  \leqslant c_{2}x^{1/\sigma
}~~~\text{for }x>1. \label{>1}%
\end{equation}
Let $y<c_{1}$. Since $g\left(  \overline{x}\right)  =y$ by (\ref{>1}) we
cannot have $\left\vert \overline{x}\right\vert \geqslant1$. Hence by
(\ref{<1}) we have%
\[
c_{1}\overline{x}^{2}\leqslant y\leqslant c_{2}\overline{x}^{2}%
\]
and therefore $y\approx\overline{x}^{1/2}$. Let now $y>c_{2}$. By (\ref{<1})
we cannot have $\left\vert \overline{x}\right\vert <1$. Hence%
\[
c_{1}\overline{x}^{1/\sigma}\leqslant y\leqslant c_{2}\overline{x}^{1/\sigma}%
\]
and therefore $y\approx\overline{x}^{\sigma}$.
\end{proof}

\begin{lemma}
Let $1/2<\sigma<1$, let $x_{0}>0$ and, for every $x\in\mathbb{R}$, let%
\[
f\left(  x\right)  =\left\vert x\right\vert ^{1/\sigma}-\frac{1}{\sigma}%
x_{0}^{-1+1/\sigma}\left(  x-x_{0}\right)  -x_{0}^{1/\sigma}.
\]
Let $y>0$ and observe that the equation
\[
f\left(  x\right)  =y
\]
has one solution $x_{2}>x_{0}$ and one solution $x_{1}<x_{0}$. Then%
\[
x_{2}-x_{1}\approx\left\{
\begin{array}
[c]{ll}%
y^{1/2}x_{0}^{\frac{2\sigma-1}{2\sigma}} & \text{for }0<y<x_{0}^{1/\sigma},\\
y^{\sigma} & \text{for }x_{0}^{1/\sigma}<y.
\end{array}
\right.
\]

\end{lemma}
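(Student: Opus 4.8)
The plan is to reduce the equation $f(x)=y$ to the one already treated in Lemma \ref{Lemma g} by the affine substitution $x=x_{0}(1+t)$, and then to estimate the two half-lengths $x_{2}-x_{0}$ and $x_{0}-x_{1}$ separately, splitting the second one according to whether the left endpoint $x_{1}$ is still nonnegative (so the substitution applies to it too) or has crossed $0$ (where one must use the explicit formula for $f$ on $(-\infty,0)$). First I would record the structural facts used implicitly in the statement: since $1/\sigma>1$, the function $|x|^{1/\sigma}$ is $\mathcal{C}^{1}$ and strictly convex on $\mathbb{R}$, hence so is $f$; moreover $f^{\prime}(x_{0})=\frac{1}{\sigma}x_{0}^{-1+1/\sigma}-\frac{1}{\sigma}x_{0}^{-1+1/\sigma}=0$ and $f(x_{0})=0$, so $x_{0}$ is the global strict minimum of $f$ and $f$ vanishes only there. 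Consequently for every $y>0$ the equation $f(x)=y$ has exactly two solutions $x_{1}<x_{0}<x_{2}$. Finally, for $x>0$ a direct computation gives $f(x)=x_{0}^{1/\sigma}\,g\!\left(\frac{x-x_{0}}{x_{0}}\right)$, where $g(u)=(1+u)^{1/\sigma}-1-\frac{1}{\sigma}u$ is the function of Lemmas \ref{Lemma 1}--\ref{Lemma g}; so, setting $\widetilde{y}=y\,x_{0}^{-1/\sigma}$, every positive solution of $f(x)=y$ is of the form $x=x_{0}(1+t)$ with $g(t)=\widetilde{y}$.

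Since $x_{2}>x_{0}>0$, the substitution always applies to the right endpoint: $g\!\left(\frac{x_{2}-x_{0}}{x_{0}}\right)=\widetilde{y}$, so Lemma \ref{Lemma g} gives $x_{2}-x_{0}\approx x_{0}\widetilde{y}^{1/2}=y^{1/2}x_{0}^{(2\sigma-1)/(2\sigma)}$ when $\widetilde{y}\leqslant1$ (i.e. $y\leqslant x_{0}^{1/\sigma}$) and $x_{2}-x_{0}\approx x_{0}\widetilde{y}^{\sigma}=y^{\sigma}$ when $\widetilde{y}>1$ (i.e. $y>x_{0}^{1/\sigma}$); the exponent bookkeeping is just $x_{0}\widetilde{y}^{1/2}=y^{1/2}x_{0}^{1-1/(2\sigma)}$ and $x_{0}\widetilde{y}^{\sigma}=y^{\sigma}$. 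For the left endpoint, note $f(0)=x_{0}^{1/\sigma}(\tfrac{1}{\sigma}-1)$, so $x_{1}\geqslant0$ precisely when $y\leqslant x_{0}^{1/\sigma}(\tfrac{1}{\sigma}-1)$; in that range the substitution still applies, $g\!\left(-\frac{x_{0}-x_{1}}{x_{0}}\right)=\widetilde{y}$ with $\frac{x_{0}-x_{1}}{x_{0}}\in[0,1]$, hence $\widetilde{y}\leqslant g(-1)=\tfrac{1}{\sigma}-1<1$ and Lemma \ref{Lemma g} yields $x_{0}-x_{1}\approx x_{0}\widetilde{y}^{1/2}=y^{1/2}x_{0}^{(2\sigma-1)/(2\sigma)}$; combined with the estimate for $x_{2}-x_{0}$ in this same (sub-$x_{0}^{1/\sigma}$) range we get $x_{2}-x_{1}\approx y^{1/2}x_{0}^{(2\sigma-1)/(2\sigma)}$.

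It remains to treat $x_{1}<0$, i.e. $y>x_{0}^{1/\sigma}(\tfrac{1}{\sigma}-1)$. Writing $x_{1}=-s$ with $s>0$, the explicit formula gives $s^{1/\sigma}+\frac{1}{\sigma}x_{0}^{-1+1/\sigma}s=y-x_{0}^{1/\sigma}(\tfrac{1}{\sigma}-1)=:z$, with $0<z<y$. From $s^{1/\sigma}\leqslant z$ we get $s\leqslant z^{\sigma}\leqslant y^{\sigma}$; if moreover $s\geqslant x_{0}$ then $\frac{1}{\sigma}x_{0}^{-1+1/\sigma}s\leqslant\frac{1}{\sigma}s^{1/\sigma}$, so $z\leqslant(1+\tfrac{1}{\sigma})s^{1/\sigma}\leqslant3s^{1/\sigma}$ and hence $s\geqslant(z/3)^{\sigma}$. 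One then distinguishes: (a) $x_{0}^{1/\sigma}(\tfrac{1}{\sigma}-1)<y\leqslant x_{0}^{1/\sigma}$, where $z\leqslant x_{0}^{1/\sigma}(2-\tfrac{1}{\sigma})<x_{0}^{1/\sigma}$ forces $s<x_{0}$, so $x_{0}-x_{1}=x_{0}+s\approx x_{0}$, and since $y\approx x_{0}^{1/\sigma}$ throughout this bounded-ratio window we have $x_{0}\approx y^{1/2}x_{0}^{(2\sigma-1)/(2\sigma)}\approx x_{2}-x_{0}$, giving again $x_{2}-x_{1}\approx y^{1/2}x_{0}^{(2\sigma-1)/(2\sigma)}$; (b) $y>x_{0}^{1/\sigma}$, where $z\approx y$ (because $z>y-y(\tfrac{1}{\sigma}-1)=y(2-\tfrac1\sigma)>0$), so either $s\geqslant x_{0}$ and then $s\approx z^{\sigma}\approx y^{\sigma}\geqslant x_{0}$, whence $x_{0}-x_{1}=x_{0}+s\approx y^{\sigma}$, or $s<x_{0}$ and then $z<(1+\tfrac{1}{\sigma})x_{0}^{1/\sigma}$ forces $y\approx x_{0}^{1/\sigma}$, so $x_{0}-x_{1}\approx x_{0}\approx y^{\sigma}$; in both cases, together with $x_{2}-x_{0}\approx y^{\sigma}$, we obtain $x_{2}-x_{1}\approx y^{\sigma}$.

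The main obstacle is precisely the bookkeeping in the case $x_{1}<0$: the left half of the chord is governed by the two-term expression $s^{1/\sigma}+\frac{1}{\sigma}x_{0}^{-1+1/\sigma}s$ rather than by a single power, so one must carefully identify which summand dominates --- the pure power $s^{1/\sigma}$ once $s\gtrsim x_{0}$ (equivalently, once $y$ is genuinely larger than $x_{0}^{1/\sigma}$), with the linear term mattering only in the transitional window $y\approx x_{0}^{1/\sigma}$, where all relevant quantities are comparable to $x_{0}$ and the two claimed estimates coincide. Everything else --- the reduction to $g$ and the exponent arithmetic --- is routine once Lemmas \ref{Lemma 1} and \ref{Lemma g} are available.
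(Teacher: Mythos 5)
Your proposal is correct and shares the paper's main idea: reduce $f(x)=y$ for $x>0$ to $g((x-x_{0})/x_{0})=y\,x_{0}^{-1/\sigma}$ and invoke Lemma \ref{Lemma g}, then case on the sign of $x_{1}$. The two proofs diverge only in how they handle the case $x_{1}<0$. You estimate $s=|x_{1}|$ directly from the explicit relation $s^{1/\sigma}+\frac{1}{\sigma}x_{0}^{-1+1/\sigma}s=z$, which forces a further split (whether $s\gtrless x_{0}$, whether $y\gtrless x_{0}^{1/\sigma}$) and some careful bookkeeping in the transitional window $y\approx x_{0}^{1/\sigma}$. The paper instead sidesteps all of this with two short observations: from $f(0)\leqslant y=f(x_{2})$ one gets $x_{0}\leqslant\sigma^{\sigma/(1-\sigma)}x_{2}$ and hence $x_{2}-x_{0}\approx x_{2}$; and since the linear term of $f$ changes sign under $x\mapsto-x$, one has $f(-x_{2})>f(x_{2})=y$, hence $|x_{1}|<x_{2}$. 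Together with $x_{1}\leqslant 0$ these give $x_{2}\leqslant x_{2}-x_{1}<2x_{2}$, so $x_{2}-x_{1}\approx x_{2}\approx x_{2}-x_{0}$, and the already-known estimate for $x_{2}-x_{0}$ finishes the proof without ever estimating $|x_{1}|$ from below. Your route is valid but more laborious; the paper's route exploits the built-in asymmetry of $f$ (from the linear term) to reduce the negative branch entirely to the positive one, which is the cleaner strategy and worth internalizing.
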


\begin{proof}
Let $\overline{x}>0$ such that $f\left(  \overline{x}\right)  =y$ and let%
\[
g\left(  x\right)  =\left(  1+x\right)  ^{1/\sigma}-1-\frac{1}{\sigma}x.
\]
Since for $x>0$%
\begin{align*}
f\left(  x\right)   &  =x^{1/\sigma}-\frac{1}{\sigma}x_{0}^{-1+1/\sigma
}\left(  x-x_{0}\right)  -x_{0}^{1/\sigma}\\
&  =x_{0}^{1/\sigma}\left[  \left(  1+\frac{x-x_{0}}{x_{0}}\right)
^{1/\sigma}-\frac{1}{\sigma}\left(  \frac{x-x_{0}}{x_{0}}\right)  -1\right] \\
&  =x_{0}^{1/\sigma}g\left(  \frac{x-x_{0}}{x_{0}}\right)
\end{align*}
we have%
\[
g\left(  \frac{\overline{x}-x_{0}}{x_{0}}\right)  =yx_{0}^{-1/\sigma}.
\]
By Lemma \ref{Lemma g} we have%
\[
\left\vert \frac{\overline{x}-x_{0}}{x_{0}}\right\vert \approx\left\{
\begin{array}
[c]{cl}%
y^{1/2}x_{0}^{-1/\left(  2\sigma\right)  } & \text{for }0<y<x_{0}^{1/\sigma
},\\
y^{\sigma}x_{0}^{-1} & \text{for }y>x_{0}^{1/\sigma}.
\end{array}
\right.
\]
If both $x_{1}$ and $x_{2}$ are positive, from the above estimate we easily
obtain%
\begin{equation}
x_{2}-x_{1}\approx\left\{
\begin{array}
[c]{ll}%
y^{1/2}x_{0}^{1-1/\left(  2\sigma\right)  } & \text{for }0<y<x_{0}^{1/\sigma
},\\
y^{\sigma} & \text{for }y>x_{0}^{1/\sigma}.
\end{array}
\right.  \label{x2-x1}%
\end{equation}
Assume now $x_{1}\leqslant0$ and $x_{2}>0$. Then $f\left(  0\right)  \leqslant
y=f\left(  x_{2}\right)  $. This implies
\[
x_{0}<\sigma^{\sigma/\left(  1-\sigma\right)  }x_{2}%
\]
with $\sigma^{\sigma/\left(  1-\sigma\right)  }<1$. Then $x_{2}-x_{0}\approx
x_{2}$. Let us show that $\left\vert x_{1}\right\vert <x_{2}$. This means that%
\[
f\left(  -x_{2}\right)  >f\left(  x_{2}\right)  =y.
\]
Indeed,%
\[
\left\vert -x_{2}\right\vert ^{1/\sigma}-\frac{1}{\sigma}x_{0}^{-1+1/\sigma
}\left(  -x_{2}-x_{0}\right)  -x_{0}^{1/\sigma}>\left\vert x_{2}\right\vert
^{1/\sigma}-\frac{1}{\sigma}x_{0}^{-1+1/\sigma}\left(  x_{2}-x_{0}\right)
-x_{0}^{1/\sigma}=y.
\]
This gives (\ref{x2-x1}) also in this case.
\end{proof}

The proof of Proposition \ref{Lemma C sigma} is now complete.

\begin{proposition}
Let $\frac{1}{2}\leqslant\sigma<1$. Then there exists $c>0$ such that for
every $\theta\in\left[  0,\pi\right]  $ we have%
\begin{equation}
\left\vert \widehat{\chi}_{C_{\sigma}}\left(  \pm\rho\Theta\right)
\right\vert \leqslant c\left\{
\begin{array}
[c]{ll}%
\rho^{-1-\sigma} & \text{for }\left\vert \theta-\frac{\pi}{2}\right\vert
\leqslant c\rho^{-1+\sigma},\\
\rho^{-3/2}\left\vert \theta-\frac{\pi}{2}\right\vert ^{\frac{2\sigma
-1}{2\left(  1-\sigma\right)  }} & \text{for }c\rho^{-1+\sigma}\leqslant
\left\vert \theta-\frac{\pi}{2}\right\vert \leqslant\frac{\pi}{2}.
\end{array}
\right.  \label{Stima Fourier C_sigma}%
\end{equation}

\end{proposition}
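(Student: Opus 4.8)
The plan is to read off the estimate directly from the pointwise Fourier bound \eqref{Stima chi hat corde} together with the chord asymptotics for $C_{\sigma}$ proved in Proposition \ref{Lemma C sigma}. The key structural fact is that $C_{\sigma}$, being symmetric about both coordinate axes, is centrally symmetric, so $\left\vert \gamma_{-\Theta}\left(  \delta\right)  \right\vert =\left\vert \gamma_{\Theta}\left(  \delta\right)  \right\vert $ for every direction $\Theta$ and every $\delta>0$. Hence, for $\rho$ large enough that $\rho^{-1}<\tfrac12$, inequality \eqref{Stima chi hat corde} gives
\[
\left\vert \widehat{\chi}_{C_{\sigma}}\left(  \pm\rho\Theta\right)  \right\vert \leqslant\frac{c}{\rho}\left(  \left\vert \gamma_{\Theta}\left(  \rho^{-1}\right)  \right\vert +\left\vert \gamma_{-\Theta}\left(  \rho^{-1}\right)  \right\vert \right)  \leqslant\frac{c}{\rho}\left\vert \gamma_{\Theta}\left(  \rho^{-1}\right)  \right\vert ,
\]
while for $\rho$ in any bounded range the asserted inequality is trivial since $\left\vert \widehat{\chi}_{C_{\sigma}}\right\vert \leqslant\left\vert C_{\sigma}\right\vert $.

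Next I would reduce the range of $\theta$ to $\pi/2\leqslant\theta\leqslant\pi$, where Proposition \ref{Lemma C sigma} applies. Reflection about the $y$-axis is a symmetry of $C_{\sigma}$ and sends $\Theta=\left(  \cos\theta,\sin\theta\right)  $ to $\left(  \cos\left(  \pi-\theta\right)  ,\sin\left(  \pi-\theta\right)  \right)  $, so $\left\vert \gamma_{\left(  \cos\theta,\sin\theta\right)  }\left(  \delta\right)  \right\vert =\left\vert \gamma_{\left(  \cos\left(  \pi-\theta\right)  ,\sin\left(  \pi-\theta\right)  \right)  }\left(  \delta\right)  \right\vert $ and $\left\vert \left(  \pi-\theta\right)  -\tfrac{\pi}{2}\right\vert =\left\vert \theta-\tfrac{\pi}{2}\right\vert $. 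Thus \eqref{corda C_gamma} holds for all $\theta\in\left[  0,\pi\right]  $ with $\theta-\tfrac{\pi}{2}$ replaced throughout by $\left\vert \theta-\tfrac{\pi}{2}\right\vert $. Setting $\delta=\rho^{-1}$ and noting $\delta^{1-\sigma}=\rho^{-\left(  1-\sigma\right)  }=\rho^{-1+\sigma}$, the two regimes of \eqref{corda C_gamma} become: if $\left\vert \theta-\tfrac{\pi}{2}\right\vert <c\,\rho^{-1+\sigma}$ then $\left\vert \gamma_{\Theta}\left(  \rho^{-1}\right)  \right\vert \approx\rho^{-\sigma}$, giving $\left\vert \widehat{\chi}_{C_{\sigma}}\left(  \pm\rho\Theta\right)  \right\vert \leqslant c\rho^{-1-\sigma}$; and if $c\,\rho^{-1+\sigma}<\left\vert \theta-\tfrac{\pi}{2}\right\vert \leqslant\tfrac{\pi}{2}$ then $\left\vert \gamma_{\Theta}\left(  \rho^{-1}\right)  \right\vert \approx\rho^{-1/2}\left\vert \theta-\tfrac{\pi}{2}\right\vert ^{\frac{2\sigma-1}{2\left(  1-\sigma\right)  }}$, giving $\left\vert \widehat{\chi}_{C_{\sigma}}\left(  \pm\rho\Theta\right)  \right\vert \leqslant c\rho^{-3/2}\left\vert \theta-\tfrac{\pi}{2}\right\vert ^{\frac{2\sigma-1}{2\left(  1-\sigma\right)  }}$. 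This is precisely \eqref{Stima Fourier C_sigma}.

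Since the substantive work is already contained in Proposition \ref{Lemma C sigma} and in the Podkorytov-type bound \eqref{Stima chi hat corde} (itself a consequence of \eqref{Podkorytov}), there is no real obstacle; the only points demanding a little care are the use of the symmetries of $C_{\sigma}$ (central symmetry to eliminate the $\gamma_{-\Theta}$ contribution, and $y$-axis symmetry to restrict $\theta$) and the routine bookkeeping with the exponents when specializing $\delta=\rho^{-1}$. I would also remark that in the degenerate case $\sigma=1/2$ the exponent $\frac{2\sigma-1}{2\left(  1-\sigma\right)  }$ vanishes and both cases collapse to the single bound $c\rho^{-3/2}$, in agreement with the fact that $\partial C_{1/2}$ has everywhere positive curvature and hence $\left\vert \gamma_{\Theta}\left(  \delta\right)  \right\vert \approx\delta^{1/2}$ uniformly in $\Theta$.
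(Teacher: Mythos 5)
Your argument is correct and is exactly the route the paper takes: the paper's own proof is the single line ``It is a consequence of Proposition \ref{Lemma C sigma}, (\ref{Stima chi hat corde}) and the symmetries of $C_{\sigma}$,'' and your write-up simply fills in those details (central symmetry to replace $\left\vert\gamma_{-\Theta}\right\vert+\left\vert\gamma_{\Theta}\right\vert$ by $2\left\vert\gamma_{\Theta}\right\vert$, the $y$-axis reflection to pass from $\theta\in[\pi/2,\pi]$ to $\theta\in[0,\pi]$ with $\left\vert\theta-\pi/2\right\vert$, and the substitution $\delta=\rho^{-1}$). The closing observation about $\sigma=1/2$ is a nice sanity check but not logically needed.
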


\begin{proof}
It is a consequence of Proposition \ref{Lemma C sigma},
(\ref{Stima chi hat corde}) and the symmetries of $C_{\sigma}$.
\end{proof}

\bigskip

We now consider the case $\sigma=1$ which requires a slightly different
construction. Let $C_{1}$ be a convex planar body, symmetric about the axes,
such that in a neighborhood of the point $\left(  0,-1\right)  $ the boundary
$\partial C_{1}$ coincides with the graph of the function $y=\frac{3}{4}%
x^{2}+\frac{1}{4}\left\vert x\right\vert -1$. We also assume that $\partial
C_{1}$ has positive curvature away from the points $\left(  0,\pm1\right)  $.
We have the following result.

\begin{lemma}
\label{Lemma C_1}The above convex body $C_{1}$ satisfies the following
estimates. There exists $c>0$ such that for every $\theta\in\left[
0,\pi\right]  $ we have%
\begin{equation}
\left\vert \widehat{\chi}_{C_{1}}\left(  \pm\rho\Theta\right)  \right\vert
\leqslant c_{1}\left\{
\begin{array}
[c]{ll}%
\rho^{-2} & \text{for }\left\vert \theta-\frac{\pi}{2}\right\vert \leqslant
c_{2},\\[0.3cm]%
\rho^{-3/2} & \text{for }c_{2}\leqslant\left\vert \theta-\frac{\pi}%
{2}\right\vert .
\end{array}
\right.  \label{Stima Fourier C_1}%
\end{equation}

\end{lemma}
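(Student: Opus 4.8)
The plan is to deduce Lemma \ref{Lemma C_1} from the pointwise estimate (\ref{Stima chi hat corde}), which — taking $\delta=\rho^{-1}$ — reduces everything to bounding the chord lengths $|\gamma_{\Theta}(\delta)|$ and $|\gamma_{-\Theta}(\delta)|$ of $C_1$, just as in the proof of Proposition \ref{Lemma C sigma}. The one structural novelty is that $\partial C_1$ is not $\mathcal C^1$: the function $\tfrac34 x^2+\tfrac14|x|-1$ has a genuine corner at $x=0$, so $(0,-1)$ and, by the symmetry of $C_1$, $(0,1)$ are vertices of $C_1$, each carrying a normal cone of inward normals $\{(-\sin\phi,\cos\phi):|\phi|\le\arctan\tfrac14\}$ (resp.\ its reflection across the horizontal axis); away from these two points $\partial C_1$ is $\mathcal C^2$ with curvature bounded below. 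The expected dichotomy is that near a vertex, and only for directions interior to its normal cone, the chord is of polygon‑like size $\delta$, while in all remaining situations it is of size $\delta^{1/2}$.

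First I would prove the universal bound $|\gamma_{\Theta}(\delta)|\le c\,\delta^{1/2}$, valid for every direction $\Theta$ and $0<\delta\le\delta_0$. If the point $P$ realizing $\inf_{x\in C_1}x\cdot\Theta$ stays away from $(0,\pm1)$, this is the upper‑bound counterpart of Remark \ref{CordaC2}: locally $\partial C_1$ is the graph of a $\mathcal C^2$ convex $\psi$ with $\psi''\ge\kappa>0$, hence $\psi(x)\ge\tfrac\kappa2 x^2$ and the chord at height $\delta$ has length a multiple of $\delta^{1/2}$. If $P=(0,-1)$ (the case $(0,1)$ being symmetric), then $\Theta=(-\sin\phi,\cos\phi)$ with $|\phi|\le\arctan\tfrac14$ and the endpoints $(u,\tfrac34u^2+\tfrac14|u|-1)$ of $\gamma_\Theta(\delta)$ satisfy
\[
\tfrac34 u^2\cos\phi+\bigl(\tfrac14|u|\cos\phi-u\sin\phi\bigr)=\delta .
\]
Because $|\phi|\le\arctan\tfrac14$ the bracket is nonnegative, so $\tfrac34 u^2\cos\phi\le\delta$ and $|u|\le c\,\delta^{1/2}$; for $\delta_0$ small this also keeps the chord inside the region where $\partial C_1$ is the explicit graph. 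The remaining case is when $P$ lies near but not at a vertex, so the chord may straddle the corner $x=0$; here one checks, using convexity of $\tfrac34x^2+\tfrac14|x|-1$, that the excess over any supporting line $\ell$ at $P=(x_0,\varphi(x_0))$ satisfies $\varphi(u)-\ell(u)\ge\tfrac34(u-x_0)^2$ across $x=0$, which once more yields $|\gamma_\Theta(\delta)|\le c\,\delta^{1/2}$.

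Next I would prove the refined bound: if in addition $|\phi|\le c_2$ with $c_2<\arctan\tfrac14$, then $|\gamma_\Theta(\delta)|\le c(c_2)\,\delta$. Indeed $\eta:=\min_{|\psi|\le c_2}\bigl(\tfrac14\cos\psi-|\sin\psi|\bigr)=\tfrac14\cos c_2-\sin c_2>0$, so the bracket in the displayed identity is $\ge\eta|u|$, forcing $|u|\le\delta/\eta$. Now fix $c_2>0$ small with $c_2<\arctan\tfrac14$. If $|\theta-\tfrac\pi2|\le c_2$, then $\Theta$ and $-\Theta$ lie in the interiors of the normal cones at $(0,-1)$ and $(0,1)$, at distance $\ge\arctan\tfrac14-c_2$ from the cone boundaries; the refined bound gives $|\gamma_{\pm\Theta}(\rho^{-1})|\le c\rho^{-1}$, hence by (\ref{Stima chi hat corde}) $|\widehat{\chi}_{C_1}(\pm\rho\Theta)|\le c\rho^{-2}$. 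If $c_2\le|\theta-\tfrac\pi2|\le\tfrac\pi2$, the universal bound gives $|\gamma_{\pm\Theta}(\rho^{-1})|\le c\rho^{-1/2}$, hence $|\widehat{\chi}_{C_1}(\pm\rho\Theta)|\le c\rho^{-3/2}$. The symmetry of $C_1$ about the axes then extends this to all $\theta\in[0,\pi]$.

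The computations with the explicit arc $y=\tfrac34x^2+\tfrac14|x|-1$ are elementary. The only point requiring care is the transition regime — where $P$ slides off a vertex onto the adjacent $\mathcal C^2$ arc and the chord straddles the corner — together with verifying that the constants stay uniform in $\theta$ (in particular that $\cos\phi$ and $\eta$ are bounded below on the relevant ranges). I expect this bookkeeping to be the main, though entirely routine, obstacle.
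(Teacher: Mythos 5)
Your proposal is correct and follows essentially the same route as the paper: reduce via the Podkorytov bound (\ref{Stima chi hat corde}) to upper estimates on the chord lengths $\left\vert \gamma_{\pm\Theta}\left(\rho^{-1}\right)\right\vert$, then obtain those by elementary computation with the explicit arc $y=\tfrac34x^2+\tfrac14\left\vert x\right\vert-1$ near the corner and by positive curvature elsewhere, finally invoking the symmetries of $C_1$. The paper reaches the same dichotomy through the two-sided asymptotic $\left\vert\gamma_\Theta\left(\delta\right)\right\vert\approx\delta/\sqrt{\left(\tfrac14-\tan\left(\theta-\tfrac\pi2\right)\right)^2+3\delta}$ valid for $0\leqslant\tan\left(\theta-\tfrac\pi2\right)<\tfrac14$, whereas you go directly for the one-sided bounds and are slightly more explicit about the case where the chord straddles the corner; this is an organizational difference, not a different argument.
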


\begin{proof}
In view of (\ref{Stima chi hat corde}) and the symmetries of $C_{1}$ it is
enough to estimate $\left\vert \gamma_{\Theta}\left(  \delta\right)
\right\vert $ for $\pi/2\leqslant\theta\leqslant\pi$. Observe that for
$\frac{1}{4}<\tan\left(  \theta-\frac{\pi}{2}\right)  \leqslant1$ there exists
a point $P=\left(  x_{0},y_{0}\right)  \in\partial C_{1}$, with $x_{0}>0$ such
that $\mathbf{n}\left(  P\right)  =\Theta$. If $\tan\left(  \theta-\frac{\pi
}{2}\right)  =\frac{1}{4}$ we do not have a unique normal at $\left(
0,-1\right)  $ and in this case $\Theta$ is the limit of the inward normal as
$x_{0}\rightarrow0^{+}$. Recall that
\[
\gamma_{\Theta}\left(  \delta\right)  =\left\{  x\in C_{1}:x\cdot
\mathbf{n}\left(  P\right)  =\inf_{y\in C}\left(  y\cdot\mathbf{n}\left(
P\right)  \right)  +\delta\right\}
\]
(see Definition \ref{Def Corda}). Since the curvature at $P$ is positive we
have $\left\vert \gamma_{\Theta}\left(  \delta\right)  \right\vert
\approx\left\vert \gamma_{\Theta}^{+}\left(  \delta\right)  \right\vert
\approx\delta^{1/2}$. Let now $0\leqslant\tan\left(  \theta-\frac{\pi}%
{2}\right)  <\frac{1}{4}$. A computation shows that the chord $\gamma_{\Theta
}\left(  \delta\right)  $ satisfies%
\begin{align*}
\left\vert \gamma_{\Theta}\left(  \delta\right)  \right\vert  &  \approx
\frac{\delta}{\sqrt{\left[  \frac{1}{4}-\tan\left(  \theta-\frac{\pi}%
{2}\right)  \right]  ^{2}+3\delta}}\\
&  \approx\left\{
\begin{array}
[c]{lll}%
\delta^{1/2} &  & \text{for }0\leqslant\frac{1}{4}-\tan\left(  \theta
-\frac{\pi}{2}\right)  <\delta^{1/2},\\[0.3cm]%
\frac{\delta}{\frac{1}{4}-\tan\left(  \theta-\frac{\pi}{2}\right)  } &  &
\text{for }\frac{1}{4}-\tan\left(  \theta-\frac{\pi}{2}\right)  \geqslant
\delta^{1/2}.
\end{array}
\right.
\end{align*}
In particular, we have the estimate $\left\vert \gamma_{\Theta}\left(
\delta\right)  \right\vert \leqslant c\delta^{1/2}$ for every $\theta$. It
follows that for a suitable $c_{2}<\frac{1}{4}$ we have%
\[
\left\vert \gamma_{\Theta}\left(  \delta\right)  \right\vert \leqslant
c_{1}\left\{
\begin{array}
[c]{lll}%
\delta^{1/2} &  & \text{for }\left\vert \tan\left(  \theta-\frac{\pi}%
{2}\right)  \right\vert >c_{2},\\[0.3cm]%
\delta &  & \text{for }\left\vert \tan\left(  \theta-\frac{\pi}{2}\right)
\right\vert \leqslant c_{2}.
\end{array}
\right.
\]
\bigskip
\end{proof}

\begin{proof}
[Proof of Theorem \ref{Theorem B-b}.]Assume first $\frac{1}{2}\leqslant
\sigma<1$ and let $C_{\sigma}$ be as above. For every positive integer $j$ let%
\[
N_{j}=\left[  j^{\frac{2\sigma+1}{2\sigma+3}}\right]  \left[  j^{\frac
{2}{2\sigma+3}}\right]  .
\]
Here $\left[  x\right]  $ denotes the integer part of $x$. Also let $K=\left[
j^{\frac{2\sigma+1}{2\sigma+3}}\right]  $, $L=\left[  j^{\frac{2}{2\sigma+3}%
}\right]  ,$
\[
u_{k,\ell}=\left(  \frac{k}{K},\frac{\ell}{L}\right)  ,
\]
and%
\[
\mathcal{P}_{N_{j}}=\left\{  u_{k,\ell}\right\}  _{\substack{k=0,\ldots
,K-1\\\ell=0,\ldots,L-1}}.
\]
Then%
\[
\int_{\mathbb{T}^{2}}\left\vert \operatorname*{card}\left(  \mathcal{P}%
_{N_{j}}\cap\left(  C_{\sigma}+t\right)  \right)  -N_{j}\left\vert C_{\sigma
}\right\vert \right\vert ^{2}~dt=\sum_{m\neq0}\left\vert \widehat{\chi
}_{C_{\sigma}}\left(  m\right)  \right\vert ^{2}\left\vert \sum_{k=0}%
^{K-1}\sum_{\ell=0}^{L-1}e^{2\pi im\cdot u_{j,k}}\right\vert ^{2}.
\]
Observe that%
\[
\sum_{k=0}^{K-1}\sum_{\ell=0}^{L-1}e^{2\pi im\cdot u_{k,\ell}}=\left\{
\begin{array}
[c]{ll}%
KL & \text{if }m_{1}=Kn_{1}\text{ and }m_{2}=Ln_{2}\text{ with }n_{1},n_{2}%
\in\mathbb{Z},\\
0 & \text{otherwise.}%
\end{array}
\right.
\]
Therefore%
\begin{align*}
&  \sum_{m\neq0}\left\vert \widehat{\chi}_{C_{\sigma}}\left(  m\right)
\right\vert ^{2}\left\vert \sum_{k=0}^{K-1}\sum_{\ell=0}^{L-1}e^{2\pi im\cdot
u_{j,k}}\right\vert ^{2}=\sum_{\left(  n_{1},n_{2}\right)  \neq\left(
0,0\right)  }\left\vert \widehat{\chi}_{C_{\sigma}}\left(  Kn_{1}%
,Ln_{2}\right)  \right\vert ^{2}K^{2}L^{2}\\
=  &  K^{2}L^{2}\sum_{\left(  n_{1},n_{2}\right)  \in\Gamma_{1}}\left\vert
\widehat{\chi}_{C_{\sigma}}\left(  Kn_{1},Ln_{2}\right)  \right\vert
^{2}+K^{2}L^{2}\sum_{\left(  n_{1},n_{2}\right)  \in\Gamma_{2}}\left\vert
\widehat{\chi}_{C_{\sigma}}\left(  Kn_{1},Ln_{2}\right)  \right\vert ^{2}\\
&  +K^{2}L^{2}\sum_{\left(  n_{1},n_{2}\right)  \in\Gamma_{3}}\left\vert
\widehat{\chi}_{C_{\sigma}}\left(  Kn_{1},Ln_{2}\right)  \right\vert ^{2}%
\end{align*}
where%
\begin{align*}
\Gamma_{1}  &  =\left\{  \left(  n_{1},n_{2}\right)  \in\mathbb{Z}%
^{2}\setminus\left\{  \left(  0,0\right)  \right\}  :\left\vert Kn_{1}%
\right\vert \geqslant c_{2}\left\vert Ln_{2}\right\vert \right\} \\
\Gamma_{2}  &  =\left\{  \left(  n_{1},n_{2}\right)  \in\mathbb{Z}%
^{2}\setminus\left\{  \left(  0,0\right)  \right\}  :c_{1}\left\vert
Ln_{2}\right\vert ^{\sigma}<\left\vert Kn_{1}\right\vert <c_{2}\left\vert
Ln_{2}\right\vert \right\} \\
\Gamma_{3}  &  =\left\{  \left(  n_{1},n_{2}\right)  \in\mathbb{Z}%
^{2}\setminus\left\{  \left(  0,0\right)  \right\}  :\left\vert Kn_{1}%
\right\vert \leqslant c_{1}\left\vert Ln_{2}\right\vert ^{\sigma}\right\}  .
\end{align*}
Let $\left(  n_{1},n_{2}\right)  \in\Gamma_{1}$, then
(\ref{Stima Fourier C_sigma}) yields%
\[
\left\vert \widehat{\chi}_{C_{\sigma}}\left(  Kn_{1},Ln_{2}\right)
\right\vert ^{2}\leqslant c\left\vert Kn_{1}\right\vert ^{-3}.
\]
It follows that%
\begin{align*}
&  K^{2}L^{2}\sum_{\left(  n_{1},n_{2}\right)  \in\Gamma_{1}}\left\vert
\widehat{\chi}_{C_{\sigma}}\left(  Kn_{1},Ln_{2}\right)  \right\vert
^{2}\leqslant cK^{2}L^{2}\sum_{n_{1}=1}^{+\infty}\sum_{\left\vert
n_{2}\right\vert \leqslant c\frac{K\left\vert n_{1}\right\vert }{L}}\left\vert
Kn_{1}\right\vert ^{-3}\\
=  &  cK^{-1}L^{2}\sum_{n_{1}=1}^{+\infty}\left\vert n_{1}\right\vert
^{-3}\left(  \frac{K\left\vert n_{1}\right\vert }{L}+1\right)  \leqslant
cL+cL^{2}K^{-1}\leqslant cj^{\frac{2}{2\sigma+3}}.
\end{align*}
Let $\left(  n_{1},n_{2}\right)  \in\Gamma_{2}$. Then
(\ref{Stima Fourier C_sigma}) yields%
\[
\left\vert \widehat{\chi}_{C_{\sigma}}\left(  Kn_{1},Ln_{2}\right)
\right\vert ^{2}\leqslant c\left\vert Ln_{2}\right\vert ^{-3}\left\vert
\frac{Kn_{1}}{Ln_{2}}\right\vert ^{\frac{2\sigma-1}{1-\sigma}}=c\left\vert
Ln_{2}\right\vert ^{-\frac{2-\sigma}{1-\sigma}}\left\vert Kn_{1}\right\vert
^{\frac{2\sigma-1}{1-\sigma}},
\]
because $\tan\left(  \theta-\frac{\pi}{2}\right)  =\left(  Kn_{1}\right)
/\left(  Ln_{2}\right)  $. Since $\left(  n_{1},n_{2}\right)  \neq0$ we also
have $c_{2}\frac{\left\vert Ln_{2}\right\vert }{K}\geqslant1$ and therefore
$\left\vert n_{2}\right\vert \geqslant c\frac{K}{L}$. Hence
\begin{align*}
&  K^{2}L^{2}\sum_{\left(  n_{1},n_{2}\right)  \in\Gamma_{2}}\left\vert
\widehat{\chi}_{C_{\sigma}}\left(  Kn_{1},Ln_{2}\right)  \right\vert ^{2}\\
&  \leqslant K^{2}L^{2}\sum_{n_{2}\geqslant c\frac{K}{L}}\,\,\sum
_{c\frac{\left\vert Ln_{2}\right\vert ^{\sigma}}{K}\leqslant\left\vert
n_{1}\right\vert \leqslant c\frac{\left\vert Ln_{2}\right\vert }{K}%
}c\left\vert Ln_{2}\right\vert ^{-\frac{2-\sigma}{1-\sigma}}\left\vert
Kn_{1}\right\vert ^{\frac{2\sigma-1}{1-\sigma}}\\
&  \leqslant cL^{-\frac{\sigma}{1-\sigma}}K^{\frac{1}{1-\sigma}}\sum
_{n_{2}\geqslant c\frac{K}{L}}\left\vert n_{2}\right\vert ^{-\frac{2-\sigma
}{1-\sigma}}\sum_{\left\vert n_{1}\right\vert \leqslant c\frac{\left\vert
Ln_{2}\right\vert }{K}}\left\vert n_{1}\right\vert ^{\frac{2\sigma-1}%
{1-\sigma}}\\
&  \leqslant cL^{-\frac{\sigma}{1-\sigma}}K^{\frac{1}{1-\sigma}}\sum
_{n_{2}\geqslant c\frac{K}{L}}\left\vert n_{2}\right\vert ^{-\frac{2-\sigma
}{1-\sigma}}\left(  \frac{\left\vert Ln_{2}\right\vert }{K}\right)
^{\frac{\sigma}{1-\sigma}}=cK\sum_{n_{2}\geqslant c\frac{K}{L}}c\left\vert
n_{2}\right\vert ^{-2}\\
&  \leqslant cL\leqslant cj^{\frac{2}{2\sigma+3}}.
\end{align*}
Let $\left(  n_{1},n_{2}\right)  \in\Gamma_{3}$. Then we have%
\[
K\left\vert n_{1}\right\vert \leqslant cL\left\vert n_{2}\right\vert ,
\]
so that (\ref{Stima Fourier C_sigma}) yields%
\[
\left\vert \widehat{\chi}_{C_{\sigma}}\left(  Kn_{1},Ln_{2}\right)
\right\vert ^{2}\leqslant c\left\vert Ln_{2}\right\vert ^{-2-2\sigma}.
\]
Hence%
\begin{align*}
&  K^{2}L^{2}\sum_{\left(  n_{1},n_{2}\right)  \in\Gamma_{3}}\left\vert
\widehat{\chi}_{C_{\sigma}}\left(  Kn_{1},Ln_{2}\right)  \right\vert
^{2}\leqslant c\,K^{2}L^{2}\sum_{n_{2}=1}^{+\infty}\sum_{\left\vert
n_{1}\right\vert \leqslant c\frac{\left\vert Ln_{2}\right\vert ^{\sigma}}{K}%
}\left\vert Ln_{2}\right\vert ^{-2-2\sigma}\\
&  \leqslant cK^{2}L^{-2\sigma}\sum_{n_{2}=1}^{+\infty}\left\vert
n_{2}\right\vert ^{-2-2\sigma}\left(  \frac{\left\vert Ln_{2}\right\vert
^{\sigma}}{K}+1\right) \\
&  \leqslant cK^{2}L^{-2\sigma}\left(  \frac{L^{\sigma}}{K}+1\right)
\leqslant cKL^{-\sigma}+cK^{2}L^{-2\sigma}\leqslant cj^{\frac{2}{2\sigma+3}}.
\end{align*}
Then%
\[
\int_{\mathbb{T}^{2}}\left\vert \operatorname*{card}\left(  \mathcal{P}%
_{N_{j}}\cap\left(  C_{\sigma}+t\right)  \right)  -N_{j}\left\vert C_{\sigma
}\right\vert \right\vert ^{2}~dt\leqslant c\,j^{\frac{2}{2\sigma+3}}\leqslant
c\,N_{j}^{\frac{2}{2\sigma+3}}.
\]

We still have to prove the case $\sigma=1$. Let $C_{1}$ be as in Lemma
\ref{Lemma C_1} and for every integer $j>0$ let $K=\left[  j^{3/5}\right]  $,
$L=\left[  j^{2/5}\right]  $, $N_{j}=\left[  j^{3/5}\right]  \left[
j^{2/5}\right]  $,%
\[
u_{k,\ell}=\left(  \frac{k}{K},\frac{\ell}{L}\right)  ,
\]
and
\[
\mathcal{P}_{N_{j}}=\left\{  u_{k,\ell}\right\}  _{\substack{k=0,\ldots
,K-1\\\ell=0,\ldots,L-1}}.
\]
Then, as in the case $\frac{1}{2}\leqslant\sigma<1$,%
\begin{align*}
&  \int_{\mathbb{T}^{2}}\left\vert \mathrm{card}\left(  \mathcal{P}_{N_{j}%
}\cap\left(  C_{1}+t\right)  \right)  -N_{j}\left\vert C_{1}\right\vert
\right\vert ^{2}\ dt\\
= &  K^{2}L^{2}\sum_{\left(  n_{1},n_{2}\right)  \in\Gamma_{1}}\left\vert
\widehat{\chi}_{C_{1}}\left(  Kn_{1},Ln_{2}\right)  \right\vert ^{2}%
+K^{2}L^{2}\sum_{\left(  n_{1},n_{2}\right)  \in\Gamma_{2}}\left\vert
\widehat{\chi}_{C_{1}}\left(  Kn_{1},Ln_{2}\right)  \right\vert ^{2}\ ,
\end{align*}
where%
\begin{align*}
\Gamma_{1} &  =\left\{  \left(  n_{1},n_{2}\right)  \in\mathbb{Z}^{2}%
\setminus\left\{  \left(  0,0\right)  \right\}  :\left\vert Kn_{1}\right\vert
\leqslant c\left\vert Ln_{2}\right\vert \right\}  ,\\
\Gamma_{2} &  =\left\{  \left(  n_{1},n_{2}\right)  \in\mathbb{Z}^{2}%
\setminus\left\{  \left(  0,0\right)  \right\}  :\left\vert Kn_{1}\right\vert
>c\left\vert Ln_{2}\right\vert \right\}  .
\end{align*}
For the first series we have%
\begin{align*}
&  K^{2}L^{2}\sum_{\left(  n_{1},n_{2}\right)  \in\Gamma_{1}}\left\vert
\widehat{\chi}_{C_{1}}\left(  Kn_{1},Ln_{2}\right)  \right\vert ^{2}\leqslant
cK^{2}L^{2}\sum_{n_{2}=1}^{+\infty}\sum_{\left\vert n_{1}\right\vert
<\frac{\left\vert Ln_{2}\right\vert }{K}}\left\vert Ln_{2}\right\vert ^{-4}\\
&  \leqslant cK^{2}L^{-2}\sum_{n_{2}=1}^{+\infty}\left\vert n_{2}\right\vert
^{-4}\left(  \frac{\left\vert Ln_{2}\right\vert }{K}+1\right)  \leqslant
cKL^{-1}+K^{2}L^{-2}\approx j^{2/5}\approx N_{j}^{2/5}\ .
\end{align*}
For the second series we have%
\begin{align*}
&  K^{2}L^{2}\sum_{\left(  n_{1},n_{2}\right)  \in\Gamma_{2}}\left\vert
\widehat{\chi}_{C_{1}}\left(  Kn_{1},Ln_{2}\right)  \right\vert ^{2}\leqslant
cK^{2}L^{2}\sum_{n_{1}=1}^{+\infty}\sum_{n_{2}\leqslant\frac{\left\vert
Kn_{1}\right\vert }{L}}\left\vert Kn_{1}\right\vert ^{-3}\\
&  \leqslant cK^{-1}L^{2}\sum_{n_{1}=1}^{+\infty}\left\vert n_{1}\right\vert
^{-3}\left(  \frac{\left\vert Kn_{1}\right\vert }{L}+1\right)  \leqslant
cj^{\frac{2}{5}}\approx N_{j}^{2/5}.
\end{align*}

\end{proof}

\begin{proof}
[Proof of Proposition \ref{Prop2/5}]We show that we can apply Theorem
\ref{Theorem B-a} with $\sigma=1$ to the convex $C$. By Proposition
\ref{stimadeltadasotto} we have%
\[
\left\vert \gamma_{\Theta}\left(  \delta\right)  \right\vert \geqslant c\delta
\]
for every $\Theta$. Since $\partial C$ is piecewise $\mathcal{C}^{2}$ and $C$
is not a polygon there is an arc $\Gamma$ in $\partial C$ which is
$\mathcal{C}^{2}$ and where the curvature is away from zero. For every
$P\in\Gamma$ let $\Theta$ be the inward unit normal at $P$. Then, by the
argument in Remark \ref{CordaC2} we have
\[
\left\vert \gamma_{\Theta}\left(  \delta\right)  \right\vert \geqslant
c\delta^{1/2}.
\]

\end{proof}

\begin{proof}
[Proof of Theorem \ref{irredistr}]By Theorem \ref{StimaDaSotto}, if $\xi
\in\mathbb{R}^{2}$, $\left\vert \xi\right\vert \geqslant c_{1}$ we have%
\[
\int_{1/2}^{1}\left\vert \widehat{\chi}_{C}\left(  \tau\xi\right)  \right\vert
^{2}d\tau\geqslant c_{2}\left\vert \xi\right\vert ^{-2-2\sigma}.
\]
To apply Lemma \ref{lemmaCassels}, let $\Omega=\left\{  t\in\mathbb{R}%
^{2}:\left\vert t\right\vert \leqslant c_{3}\sqrt{N}\right\}  $, where $c_{3}$
is a constant that will be chosen later and let $U=\left\{  t\in\mathbb{R}%
^{2}:\left\vert t\right\vert \leqslant c_{1}\right\}  $. Then, as in the proof
of Theorem \ref{Theorem B-a}, we have
\begin{align*}
&  \int_{1/2}^{1}\int_{\mathbb{T}^{2}}\left\vert \operatorname*{card}\left(
\mathcal{P}_{N}\cap\left(  \tau C+t\right)  \right)  -\tau^{2}N\left\vert
C\right\vert \right\vert ^{2}dtd\tau\\
&  =\int_{1/2}^{1}\sum_{m\neq0}\left\vert \sum_{j=1}^{N}e^{2\pi im\cdot
u\left(  j\right)  }\right\vert ^{2}\left\vert \widehat{\chi}_{\tau C}\left(
m\right)  \right\vert ^{2}d\tau\\
&  \geqslant\sum_{m\in\Omega\setminus U}\left\vert \sum_{j=1}^{N}e^{2\pi
im\cdot u\left(  j\right)  }\right\vert ^{2}\int_{1/2}^{1}\tau^{2}\left\vert
\widehat{\chi}_{\Omega}\left(  \tau m\right)  \right\vert ^{2}d\tau\\
&  \geqslant c_{2}\sum_{m\in\Omega\setminus U}\left\vert \sum_{j=1}^{N}e^{2\pi
im\cdot u\left(  j\right)  }\right\vert ^{2}\left\vert m\right\vert
^{-2-2\sigma}\\
&  \geqslant c_{2}\left(  c_{3}\sqrt{N}\right)  ^{-2-2\sigma}\sum_{m\in
\Omega\setminus U}\left\vert \sum_{j=1}^{N}e^{2\pi im\cdot u\left(  j\right)
}\right\vert ^{2}\\
&  \geqslant c_{2}\left(  c_{3}\sqrt{N}\right)  ^{-2-2\sigma}\left(
N\operatorname{area}({\Omega})/4-cN^{2}\right) \\
&  =c_{2}\left(  c_{3}\right)  ^{-2-2\sigma}N^{-1-\sigma}\left(  N\pi
c_{3}^{2}N/4-cN^{2}\right)  =cN^{1-\sigma}\;,
\end{align*}
provided that $\pi c_{3}^{2}/4>c$.
\end{proof}

\begin{proof}
[Proof of Theorem \ref{Equivalenza}]Assume that condition a) holds true. Since
$C$ is convex, at every point $P\in\partial C$ there exist a left and a right
tangent. Observe that if they differed, then we would have $\left\vert
\gamma_{\Theta}^{-}\left(  \delta\right)  \right\vert +\left\vert
\gamma_{\Theta}^{+}\left(  \delta\right)  \right\vert \leqslant c\delta$ which
is incompatible with (\ref{condizione corda}). This means that $\Gamma
^{\prime}\left(  s\right)  $ exists for every $s$. We denote by $\mathbf{n}%
\left(  s\right)  $ the inward unit normal at $\Gamma\left(  s\right)  $. Let
us fix $s_{1},s_{2}$. We can clearly assume that $\left\vert s_{1}%
-s_{2}\right\vert $ is small. Let $\Theta=\mathbf{n}\left(  s_{1}\right)  $
and let $\delta=\left[  \Gamma\left(  s_{2}\right)  -\Gamma\left(
s_{1}\right)  \right]  \cdot\mathbf{n}\left(  s_{1}\right)  $. Observe that
$\left\vert \left[  \Gamma\left(  s_{2}\right)  -\Gamma\left(  s_{1}\right)
\right]  \cdot\Gamma^{\prime}\left(  s\right)  \right\vert $ is the length of
$\gamma_{\Theta}^{+}\left(  \delta\right)  $ or $\gamma_{\Theta}^{-}\left(
\delta\right)  $ (according to the orientation of the curve). See Figure
\ref{chord8}. \begin{figure}[pth]
\begin{center}
\includegraphics[width=60mm]{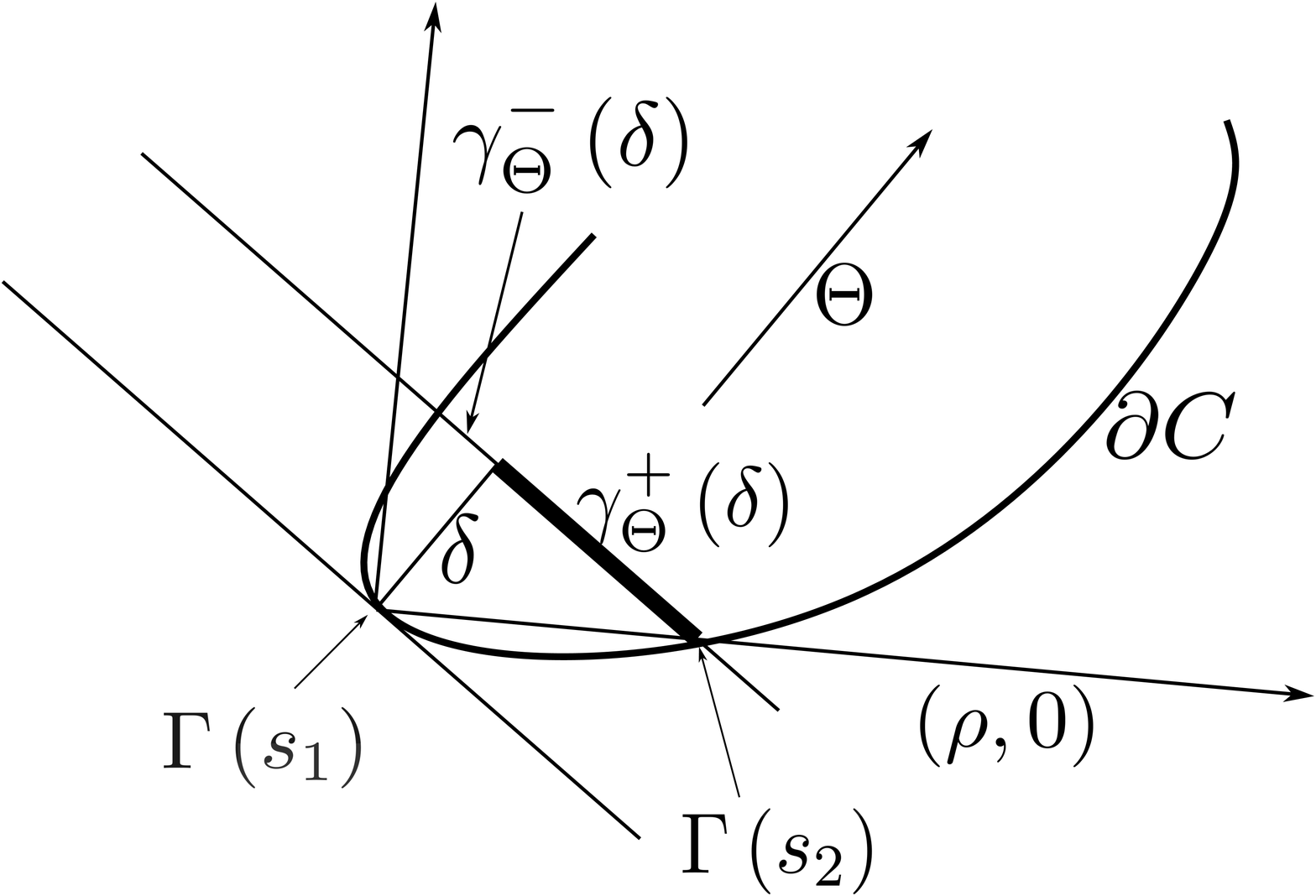}
\end{center}
\caption{Proof of Theorem \ref{Equivalenza}, first part.}%
\label{chord8}%
\end{figure}Then (\ref{condizione corda}) yields
\[
\left\vert \left[  \Gamma\left(  s_{2}\right)  -\Gamma\left(  s_{1}\right)
\right]  \cdot\Gamma^{\prime}\left(  s\right)  \right\vert \geqslant
c\delta^{1/\left(  1+\alpha\right)  },
\]
so that%
\begin{align}
0  &  \leqslant\left[  \Gamma\left(  s_{2}\right)  -\Gamma\left(
s_{1}\right)  \right]  \cdot\mathbf{n}\left(  s_{1}\right)  \leqslant
c\left\vert \left[  \Gamma\left(  s_{2}\right)  -\Gamma\left(  s_{1}\right)
\right]  \cdot\Gamma^{\prime}\left(  s_{1}\right)  \right\vert ^{1+\alpha
}\label{ns1}\\
&  \leqslant c\left\vert \Gamma\left(  s_{2}\right)  -\Gamma\left(
s_{1}\right)  \right\vert ^{1+\alpha}.\nonumber
\end{align}
Similarly%
\begin{equation}
0\leqslant\left[  \Gamma\left(  s_{1}\right)  -\Gamma\left(  s_{2}\right)
\right]  \cdot\mathbf{n}\left(  s_{2}\right)  \leqslant c\left\vert
\Gamma\left(  s_{2}\right)  -\Gamma\left(  s_{1}\right)  \right\vert
^{1+\alpha}. \label{ns2}%
\end{equation}
We claim that%
\[
\left\vert \mathbf{n}\left(  s_{1}\right)  -\mathbf{n}\left(  s_{2}\right)
\right\vert \leqslant c\left\vert s_{2}-s_{1}\right\vert ^{\alpha}.
\]
Indeed, let $\rho=\left\vert \Gamma\left(  s_{1}\right)  -\Gamma\left(
s_{2}\right)  \right\vert $ and let us choose coordinates so that%
\[
\Gamma\left(  s_{2}\right)  -\Gamma\left(  s_{1}\right)  =\left(
\rho,0\right)  .
\]
Then from (\ref{ns1}) and (\ref{ns2}) we obtain, writing $\mathbf{n}\left(
s\right)  =\left(  n_{1}\left(  s\right)  ,n_{2}\left(  s\right)  \right)  $,
that%
\begin{align*}
0  &  \leqslant n_{1}\left(  s_{1}\right)  \leqslant c\rho^{\alpha}\\
0  &  \leqslant-n_{1}\left(  s_{2}\right)  \leqslant c\rho^{\alpha}%
\end{align*}
and therefore%
\[
0\leqslant n_{1}\left(  s_{1}\right)  -n_{1}\left(  s_{2}\right)  \leqslant
c\rho^{\alpha}.
\]
Since $\left\vert \mathbf{n}\left(  s_{1}\right)  \right\vert =\left\vert
\mathbf{n}\left(  s_{2}\right)  \right\vert =1$ and $n_{2}\left(
s_{1}\right)  ,n_{2}\left(  s_{2}\right)  >0$ we have%
\begin{align*}
\left\vert n_{2}\left(  s_{1}\right)  -n_{2}\left(  s_{2}\right)  \right\vert
&  =\left\vert \sqrt{1-\left[  n_{1}\left(  s_{1}\right)  \right]  ^{2}}%
-\sqrt{1-\left[  n_{1}\left(  s_{2}\right)  \right]  ^{2}}\right\vert \\
&  \leqslant c\left\vert n_{1}\left(  s_{1}\right)  -n_{1}\left(
s_{2}\right)  \right\vert \leqslant c\rho^{\alpha}.
\end{align*}
It follows that%
\[
\left\vert \mathbf{n}\left(  s_{1}\right)  -\mathbf{n}\left(  s_{2}\right)
\right\vert \leqslant c\rho^{\alpha}\leqslant c\left\vert s_{2}-s_{1}%
\right\vert ^{\alpha},
\]
so that%
\[
\left\vert \Gamma^{\prime}\left(  s_{1}\right)  -\Gamma^{\prime}\left(
s_{2}\right)  \right\vert \leqslant M\left\vert s_{1}-s_{2}\right\vert
^{\alpha}.
\]
Assume now that b) holds true. Let us fix a direction $\Theta$ and let
$\Gamma\left(  s_{0}\right)  $ be a point on $\partial C$ where $\mathbf{n}%
\left(  s_{0}\right)  =\Theta$. For $\delta$ small enough there exist two
points $\Gamma\left(  s_{1}\right)  $ and $\Gamma\left(  s_{2}\right)  $ on
$\partial C$ such that%
\begin{align*}
\left\vert \gamma_{\Theta}^{+}\left(  \delta\right)  \right\vert  &
=\left\vert \left[  \Gamma\left(  s_{2}\right)  -\Gamma\left(  s_{0}\right)
\right]  \cdot\Gamma^{\prime}\left(  s_{0}\right)  \right\vert ,\\
\left\vert \gamma_{\Theta}^{-}\left(  \delta\right)  \right\vert  &
=\left\vert \left[  \Gamma\left(  s_{1}\right)  -\Gamma\left(  s_{0}\right)
\right]  \cdot\Gamma^{\prime}\left(  s_{0}\right)  \right\vert .
\end{align*}
and%
\[
\left[  \Gamma\left(  s_{j}\right)  -\Gamma\left(  s_{0}\right)  \right]
\cdot\mathbf{n}\left(  s_{0}\right)  =\delta,\ \ \ \ \ \ j=1,2.
\]
\begin{figure}[pth]
\begin{center}
\includegraphics[width=57mm]{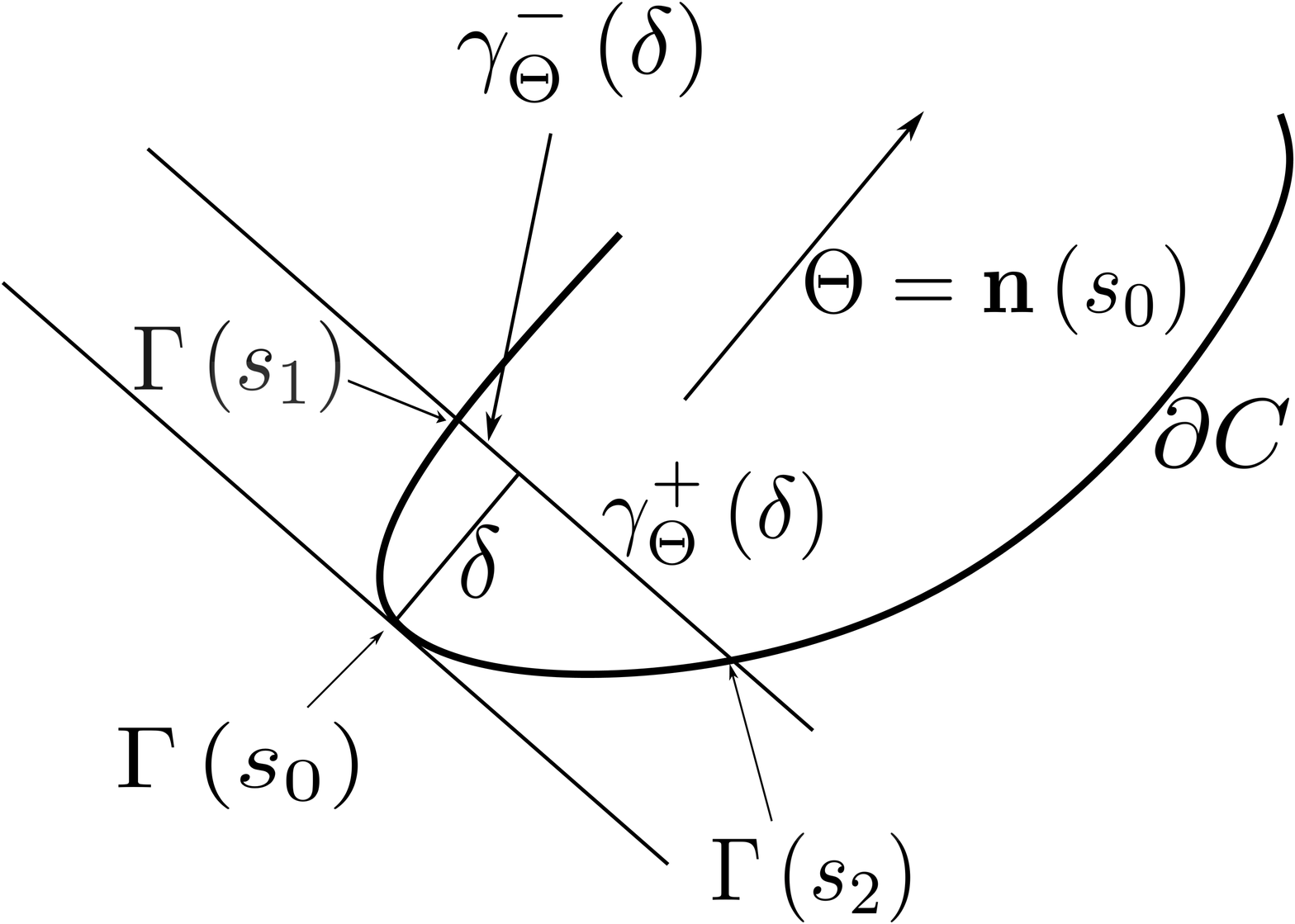}
\end{center}
\caption{Proof of Theorem \ref{Equivalenza}, second part.}%
\label{chord99}%
\end{figure}Then (see Figure \ref{chord99}), for $j=1,2$ we have%
\begin{align*}
\left[  \Gamma\left(  s_{j}\right)  -\Gamma\left(  s_{0}\right)  \right]
\cdot\mathbf{n}\left(  s_{0}\right)   &  =\left\vert \int_{s_{0}}^{s_{j}%
}\Gamma^{\prime}\left(  \tau\right)  \cdot\mathbf{n}\left(  s_{0}\right)
d\tau\right\vert \\
&  =\left\vert \int_{s_{0}}^{s_{j}}\left[  \Gamma^{\prime}\left(  \tau\right)
-\Gamma^{\prime}\left(  s_{0}\right)  \right]  \cdot\mathbf{n}\left(
s_{0}\right)  d\tau\right\vert \\
&  \leqslant\left\vert \int_{s_{0}}^{s_{j}}\left\vert \Gamma^{\prime}\left(
\tau\right)  -\Gamma^{\prime}\left(  s_{0}\right)  \right\vert d\tau
\right\vert \\
&  \leqslant\left\vert \int_{s_{0}}^{s_{j}}\left\vert \tau-s_{0}\right\vert
^{\alpha}d\tau\right\vert \leqslant c\left\vert s_{j}-s_{0}\right\vert
^{\alpha+1}.
\end{align*}
Hence%
\[
\delta^{1/\left(  \alpha+1\right)  }\leqslant c\left\vert s_{1}-s_{0}%
\right\vert \leqslant c\left\vert \Gamma\left(  s_{1}\right)  -\Gamma\left(
s_{0}\right)  \right\vert \leqslant\left\vert \gamma_{\Theta}^{-}\left(
\delta\right)  \right\vert
\]
and similarly $\delta^{1/\left(  \alpha+1\right)  }\leqslant\left\vert
\gamma_{\Theta}^{+}\left(  \delta\right)  \right\vert .$
\end{proof}

\end{document}